\newtheorem{thm}{Theorem} [section]
\theoremstyle{definition}
\newtheorem{rem}[thm]{Remark}%[section]
\theoremstyle{plain}
\newtheorem{prop}[thm]{Proposition}
\newtheorem{lem}[thm]{Lemma}
\newtheorem{cor}[thm]{Corollary}
\numberwithin{equation}{section}
\newcommand{\A}{\mathcal A}
\newcommand{\al}{\alpha}
\newcommand{\C}{\mathbb C}
\newcommand{\D}{D(2|1;\ka)}
\newcommand{\ep}{\epsilon}
\newcommand{\hf}{{\Small \frac12}}
\newcommand{\g}{\mathfrak{g}}
\newcommand{\gl}{\mathfrak{gl}}
\newcommand{\h}{\mathfrak{h}}
\newcommand{\ka}{\zeta}
\newcommand{\la}{\lambda}
\newcommand{\mc}{\mathcal}
\newcommand{\mf}{\mathfrak}
\newcommand{\N}{\mathbb N}
\newcommand{\ov}{\overline}
\newcommand{\Z}{\mathbb Z}
\newcommand{\oa}{{\bar 0}}
\newcommand{\ob}{{\bar 1}}
\newcommand{\vare}{\epsilon} %%%% change-original \vere=\varepsilon
\newcommand{\ch}{{\rm ch}}
\newcommand{\pri}{{0}}
\title[Blocks and characters of $G(3)$-modules of non-integral weights] {Blocks and characters of $G(3)$-modules of non-integral weights}
\author[Chen]{Chih-Whi Chen}
\address{Department of Mathematics, National Central University, Chung-Li, Taiwan 32054} \email{cwchen@math.ncu.edu.tw}
\author[Cheng]{Shun-Jen Cheng}
\address{Institute of Mathematics, Academia Sinica, Taipei, Taiwan 10617} \email{chengsj@math.sinica.edu.tw}
\author[Luo]{Li Luo}
\address{School of mathematical Sciences, Shanghai Key Laboratory of Pure Mathematics and Mathematical Practice, East China Normal University, Shanghai 200241, China}
\email{lluo@math.ecnu.edu.cn}
\begin{document}

\begin{abstract}
	We classify blocks in the BGG category $\mc O$ of modules of non-integral weights for
the exceptional Lie superalgebra $G(3)$. We compute the characters for tilting modules   of non-integral weights in $\mc O$.  Reduction methods are established to connect non-integral blocks of $G(3)$ with blocks of the special linear Lie algebra $\mf{sl}(2)$, the exceptional Lie algebra $G_2$, the general linear Lie superalgebras $\gl(1|1)$, $\gl(2|1)$ and the ortho-symplectic Lie superalgebra $\mf{osp}(3|2)$.
\end{abstract}

\maketitle

\setcounter{tocdepth}{1}
\tableofcontents

%%%%%%%%%%%%%%%%%%%%%%%%%%%%%%%%%%%%%%%%%%%%%%%%%%%%%%%%%%%%%%%%%%%%%%%%%%%%%%%%%%%
\section{Introduction}

	\subsection{} The Bernstein-Gelfand-Gelfand categories $\mc O$ over finite-dimensional complex simple Lie superalgebras have attracted much attention in the last two decades. A central problem in this study is the determination of the irreducible characters in $\mc O$. One of the important progresses is the discovery of a remarkable connection between the representation theories of Lie algebras and Lie superalgebras. As a consequence, the irreducible characters of finite-dimensional simple Lie superalgebras of types ABCD are determined by the Kazhdan-Lusztig polynomials of their classical counterparts which are classical Lie algebras. Since the KL polynomials for these classical Lie algebra are well-known, this gives a satisfactory solution to the irreducible character problem of simple classical Lie superalgebras in the BGG categories of modules with integral highest weights \cite{CLW11,CLW15,Bao17,BLW17,BW18}. In \cite{CMW13} it was shown that in the case of type A Lie superalgebras the computation of irreducible characters of non-integral highest weight can be reduced to that of integral highest weight. As these are known \cite{CLW15} (see also \cite{BLW17}), this completely solves the irreducible character problem in $\mc O$ for type A Lie superalgebras.

In the list of finite-dimensional complex simple Lie superalgebras classified in \cite{Kac77} there are three {\em exceptional} Lie superalgebras, namely $\D$, $G(3)$ and $F(3|1)$. In principle, the irreducible characters of these Lie superalgebras in $\mc O$ can also be obtained from those of certain infinite-rank symmetrizable Kac-Moody Lie algebras via super duality, see \cite[Corollary 4.12]{CKW15}. However, in contrast to the type ABCD case, the irreducible characters of these Kac-Moody Lie algebras are not known, and so this approach to the irreducible character problem is not applicable for the exceptional Lie superalgebras.
	
	A systematic study of the irreducible character formula in the BGG categories for $\D$ and $G(3)$ was initiated by the Weiqiang Wang and the second author in \cite{CW17} and \cite{CW18}, where they gave a solution of the irreducible character problem in the case of integral highest weights. Finite-dimensional modules have been studied in \cite{Ger00, Ma14, SZ16}. Subsequently, the authors of the present article solved the irreducible character problem of $\D$-modules in the non-integral weight case in \cite{CCL20}, thus completing the computation of irreducible characters for $D(2|1,\zeta)$ in $\mc O$.

 \subsection{}	The objective of the present paper is to address irreducible character problem for the
	remaining cases not treated in \cite{CW18}, i.e., for the irreducible $G(3)$-modules of non-integral highest weights.  We solve this problem by  providing formulas for Verma flag multiplicities of all tilting modules in $\mc O$ of non-integral highest weight.

		Recall that the even subalgebra  of the  exceptional Lie superalgebras $G(3)$ is $G_2\oplus \mf{sl}(2)$, while its odd part is the exterior tensor product of the irreducible $7$-dimensional $G_2$-module and the $2$-dimensional natural $\mf{sl}(2)$-module. As is well-known, the Weyl group of a Lie superalgebra does not control the blocks of $\mc O$ in general. The blocks that are controlled by the Weyl group are {\em typical}, while those that are not are {\em atypical}.

	 To  describe these blocks, we make use of the super Jantzen sum formula applied to the contragredient Lie superalgebra  $G(3)$, see Proposition \ref{prop:flags}. Besides providing insights into the structure of Verma modules, it is also useful for the determination of simple modules in a block.

Our first result is a classification of atypical blocks, namely, an explicit description of highest weights of the simple modules in these blocks. Here, an important ingredient in our proof is the description of the central characters by Sergeev in \cite{Serg}.  This enables us to describe the linkage class in terms of integral Weyl group and isotropic atypical roots in Theorem \ref{thm::blocks}. This is compatible with the description of the linkage principle for integral weight modules over basic classical Lie superalgebras, see e.g., \cite[Lemma 2.1]{Se03}, or \cite[Lemma 12.3]{CM17}.

		\subsection{} Our strategy of computing characters of tilting modules is similar to \cite{CCL20} and is divided into two methods: {\em reduction methods} and the application of {\em translation functors}.
		
		We first classify  all atypical blocks according to their their $G_2$ integral Weyl groups. It turns out that the Serre subcategory generated by simple modules of atypical non-integral highest weights can be decomposed into the direct sum of five full subcategories (see Section \ref{subsect::classification}): \[\mc O_{e}, ~\mc O_{\Z_2},~\mc O_{\Z_2\times \Z_2},~\mc O_{S_3},~\mc O_{W_{G_2}},\]
where the subscripts above indicate the integral Weyl groups as subgroups of the Weyl group of $G_2$. This allows us to treat the character problem of atypical blocks in each subcategory above in a uniform fashion.
		
		The  method of translation functor follows that of \cite{CW17, CW18, CCL20}. Namely, for a given tilting module $T$ whose character we wish to compute, we make a reasonable choice of an initial tilting module. Then we  apply the translation functor by tensoring the initial module with the $31$-dimensional adjoint module to obtain a direct sum the tilting modules, a summand of is $T$. A detailed analysis of the character by means of Proposition \ref{prop:flags} then enables us to derive the character of $T$.

 \subsection{} When a block is typical, it is known that it is equivalent to a block of the underlying Lie algebra under the additional assumption that it is {\em strongly typical} \cite{Gor02b}. While strongly typical blocks are typical, the converse is not true for $G(3)$.  Nevertheless, we show in Theorem \ref{thm::tysty} that in the case of $G(3)$ any typical block is still equivalent to a block of the underlying Lie algebra $\mf{sl}(2)\oplus G_2$.

\subsection{}  In the case of type $A$ Lie superalgebras, Mazorchuk, Wang and the second author reduced the irreducible character problem of an arbitrary highest weight to that of integral highest weight in \cite{CMW13}. The main tools used there are twisting, odd reflection, and parabolic induction functors. Along this line of reduction, there have been some applications to other types of Lie superalgebras in \cite{Ch16, CCL20}.  In the present paper, we establish equivalences of categories connecting the blocks in $\mc O_{e}, \mc O_{\Z_2}$ with blocks  of the special linear Lie algebra $\mf{sl}(2)$, the exceptional Lie algebra $G_2$, the general linear Lie superalgebras $\gl(1|1)$, $\gl(2|1)$ and the ortho-symplectic Lie superalgebra $\mf{osp}(3|2)$ in Sections \ref{Sect::4} and \ref{Sect::1intblocks}.
This solves the irreducible character problem in these $G(3)$-blocks.

\iffalse
As a consequence, the blocks of $\mc O_{e}$ and $\mc O_{\Z_2}$ that are equivalent to direct sum of blocks of type A Lie superalgebras are known to be Koszul (cf. \cite{BLW17}).  Therefore we obtain  solutions to the problems of structure for structural  modules, for instance,  Loewy filtrations, Yoneda extension algebra of irreducible modules, etc., in these blocks for $G(3)$.
\fi

\subsection{} We conclude this introduction with an outline of the paper. In Section \ref{sect::preli} we establish notations and conventions used throughout. We study the integral Weyl groups of non-integral weights in the Section \ref{subsect::intWgp}. We derive some consequences of the super Jantzen sum formula in the Section \ref{subsect::ConJanFor}.

In Section \ref{sect::clablock}, we describe the highest weights of simple modules in atypical blocks.
We classify in Section \ref{subsect::classification} all atypical blocks according to their integral Weyl groups. %It turns out that the Serre subcategory generated by atypical simple modules can be decomposed into the direct sum of five full subcategories, they are \[\mc O_{e}, ~\mc O_{\Z_2},~\mc O_{\Z_2 \times \Z_2},~\mc O_{S_3},~\mc O_{W_{G_2}},\] which allows us to uniformly treat the problem of irreducible characters in these non-integral atypical blocks.

We study in Section \ref{Sect::4} the typical blocks and blocks in $\mc O_{e}$, which we refer to as generic. We show that any typical block is always equivalent to a strongly typical block, while the generic blocks are equivalent to the principal block of the category of finite-dimensional $\gl(1|1)$-modules.

In Section \ref{Sect::1intblocks},  we investigate blocks in $\mc O_{\Z_2}$, which turn out to be equivalent to a direct sum of certain blocks of BGG category $\mc O$ for $\mf{sl}(2)$, $\gl(1|1)$, $\gl(2|1)$ and $\mf{osp}(3|2)$ as highest weight categories.

  In the remaining sections, we focus on the characters of tilting modules in blocks $\mc O_{\Z_2\times \Z_2}$, $\mc O_{S_3}$ and $\mc O_{W_{G_2}}$.
  We study in Section \ref{sect::OVcase} the characters of tilting modules of blocks in $\mc O_{\Z_2\times \Z_2}$, while in Section \ref{sect::OS3case} we study the characters of tilting modules in blocks in $\mc O_{S_3}$. Finally, the characters of tilting modules in $\mc O_{W_{G_2}}$ are computed in Section \ref{sect::OWG2case}.

In Appendix \ref{sect::app}, we give a proof for the characters of tilting modules of $\mf{osp}(3|2)$ for the convenience of the reader, as we were not able to find a reference in the literature. Note that in the case of integral weights such formulas can also be obtained from the results in \cite{BW18} in principle. Indeed, they have been computed by Bao and Wang earlier. We are grateful to them for communication.

\vskip 0.5cm

{{\bf Acknowledgments}. The first two authors are partially supported by MoST grants of the R.O.C.   The third author is partially supported by the Science and Technology Commission of Shanghai Municipality (grant No. 18dz2271000) and the NSF of China (grant No. 11871214).}
	
%%%%%%%%%%%%%%%%%%%%%%%%%%%%%%%%%%%%%%%%%%%%%%%%%%%%%%%%%%%%%%%%%%%%%%%%%%%%%%%%%%%
%%%%%%%%%%%%%%%%%%%%%%%%%%%%%%%%%%%%%%%%%%%%%%%%%%%%%%%%%%%%%%%%%%%%%%%%%%%%%%%%%%%
\section{Preliminaries} \label{sect::preli}
%======================================================
\subsection{Lie superalgebras $G(3)$}
\subsubsection{ Structure of $G(3)$}
The Lie superalgebra $G(3)$ is the contragredient Lie superalgebra associated with the following Cartan matrix:
\begin{align}\label{Cartan:matrix}
\begin{pmatrix}
2 & -1 & 0 \\
-3 & 2 & -1 \\
0 & 1 & 0
\end{pmatrix}.
\end{align}
We refer the reader to \cite{Kac77}, \cite[Section 5.3]{Mu12} for more details, including the parity of simple roots.

Throughout, unless otherwise stated, we denote the exceptional Lie superalgebra $G(3)$ over $\mathbb{C}$ by $\mathfrak{g}=\mathfrak{g}_{\overline{0}}\oplus \mathfrak{g}_{\overline{1}}$. The underlying even subalgebra $\mathfrak{g}_{\overline{0} }$ is isomorphic to $G_2\oplus \mathfrak{sl}(2)$, and the odd part $\mathfrak{g}_{\overline{1}}\cong \C^7\boxtimes \C^2$, where $\C^7$ and $\C^2$ are the $7$-dimensional simple $G_2$-module and the natural $\mathfrak{sl}(2)$-module, respectively.

\subsubsection{Standard root system and Weyl group}
We denote the associated Cartan subalgebra of $\mf g$ by $\mathfrak{h}$. To describe the roots of $\mf g$, we set   $$\mathfrak{h}^*=\mathbb{C}\delta\oplus\mathbb{C}\epsilon_1\oplus\mathbb{C}\epsilon_2,$$ which is the $\mathbb{C}$-linear space with basis $\{\delta,\epsilon_1,\epsilon_2\}$. Let $\epsilon_3=-\epsilon_1-\epsilon_2$ so that $\epsilon_1+\epsilon_2+\epsilon_3=0.$ The dual space  $\mathfrak{h}^*$ is equipped  with a bilinear form $(\cdot,\cdot)$ satisfying
$$(\delta,\delta)=-(\epsilon_i,\epsilon_i)=-2,\quad (\delta,\epsilon_i)=(\epsilon_i,\delta)=0,\quad (\epsilon_i,\epsilon_j)=-1, \quad\mbox{for $1\leq i\neq j\leq3$}.$$

The simple system associated with the Cartan matrix  \eqref{Cartan:matrix} is realized as $$\Pi=\{\alpha_1:=\epsilon_2-\epsilon_1, \alpha_2:=\epsilon_1,\alpha_3:=\delta+\epsilon_3\}.$$ The Dynkin diagram associated to $\Pi$ is depicted as follows:
\vskip .5cm
\begin{center}
	\begin{tikzpicture}
	\node at (0,0) {$\bigcirc$};
	\draw (0.2,0)--(1.155,0);
	\draw (0.15,0.1)--(1.2,0.1);
	\draw (0.15,-0.1)--(1.2,-0.1);
	\node at (1.35,0) {$\bigcirc$};
	\node at (0.75,0) {\Large $>$};
	\draw (1.52,0)--(2.52,0);
	\node at (2.7,0) {$\bigotimes$};
	\node at (-0.3,-.5) {\tiny $\al_1$};
	\node at (1.3,-.53) {\tiny $\al_2$};
	\node at (2.9,-.5) {\tiny $\al_3$};
	\end{tikzpicture}
\end{center}

The root system associated to $\Pi$ is  $\Phi=\Phi_{\overline{0}}\cup\Phi_{\overline{1}}$, where the set of positive even roots $ \Phi_{\overline{0}}^+$ and the set of positive odd roots  $\Phi_{\overline{1}}^+$ respectively are
$$\Phi_{\overline{0}}^+=\{2\delta,\epsilon_1,\epsilon_2,-\epsilon_3,\epsilon_2-\epsilon_1,
\epsilon_1-\epsilon_3,\epsilon_2-\epsilon_3\}, \qquad \Phi_{\overline{1}}^+=\{\delta,\delta\pm\epsilon_i~|~i=1,2,3\}.$$ We have the following subsets of positive roots:
\begin{align*}
  \Phi_{\overline{1},\otimes}^+&:=
\{\alpha\in\Phi_{\overline{1}}^+~|~(\alpha,\alpha)=0\}=\{\delta\pm\epsilon_i~|~i=1,2,3\},\\
\Phi_{\overline{1},\bullet}^+&:=\Phi_{\overline{1}}^+\setminus\Phi_{\overline{1},\otimes}^+=\{\delta\},\\
\Phi_{\overline{0},\circ}^+&:=
\{\alpha\in\Phi_{\overline{0}}^+~|~\frac{1}{2}\alpha\not\in\Phi_{\overline{1},\bullet}^+\}
=\{\epsilon_1,\epsilon_2,-\epsilon_3,\epsilon_2-\epsilon_1,
\epsilon_1-\epsilon_3,\epsilon_2-\epsilon_3\}.
\end{align*}
Observe that $\{\alpha_1,\alpha_2\}$ forms a simple system for the simple Lie algebra $G_2$. Let $$\omega_1 =\vare_1+2\vare_2,\qquad\omega_2 = \vare_1+\vare_2$$ denote the corresponding fundamental weights of $G_2$.

 The Weyl vector for $\mathfrak{g}$ is $$\rho:= \rho_\oa -\rho_\ob=-\frac{5}{2}\delta+2\epsilon_1+3\epsilon_2,$$ where
$\rho_\oa:=\frac{1}{2}\sum_{\alpha\in\Phi_{\overline{0}}^+}\alpha=\delta+2\epsilon_1+3\epsilon_2$ and  $\rho_\ob:=
\frac{1}{2}\sum_{\beta\in\Phi_{\overline{1}}^+}\beta= \frac{7}{2}\delta$.

For any $\alpha\in\Phi_{\overline{0}}$, we denote by $\alpha^{\vee}\in\mathfrak{h}$ the corresponding coroot determined by
$$\langle\lambda,\alpha^{\vee}\rangle=\frac{2(\lambda,\alpha)}{(\alpha,\alpha)}, \quad \text{ for } \lambda\in\mathfrak{h}^\ast.$$ The associated reflection $s_\alpha:\h^\ast\rightarrow\h^\ast$ is defined by $s_\alpha \la =\la -\langle \la, \alpha^\vee \rangle \alpha. $
We set $$s_0=s_{2\delta},\quad s_1 =s_{\alpha_1},\quad s_2 =s_{\alpha_2}.$$  The Weyl group of $\mf g$ is $W=\langle s_{\alpha}|~\alpha \in \Phi_\oa^+\rangle = S_2 \times W_{G_2}$, where $S_2$ and $W_{G_2}$ are the Weyl groups of $\mf{sl}(2)$ and $G_2$, respectively. Namely, $S_2= \langle s_0 \rangle$ and $W_{G_2} = \langle s_1, s_2\rangle$.

\subsubsection{Other simple systems} \label{subsect::simplesys}
There are four simple systems for $\mf g$, up to $W$-conjugacy.
\begin{align}
&\Pi^0 := \Pi = \{\vare_2-\vare_1,~\vare_1,~\delta+\vare_3\},\\
&\Pi^1 = \{ \vare_2-\vare_1,~\delta -\vare_2,~-\delta-\vare_3\},  \\
&\Pi^2 = \{\delta-\vare_1,~-\delta+\vare_2,~\vare_1\},\\
&\Pi^3 =  \{\vare_2-\vare_1,~\vare_1-\delta,~\delta\}.
\end{align}
The latter three simple systems can be obtained from the standard simple system $\Pi$ by applying   odd reflections associated with isotropic simple roots (see, e.g., \cite{PS94}).
Let us denote the corresponding four Borel subalgebras by $\mf b^0:=\mf b$, $\mf b^1$,  $\mf b^2$, and  $\mf b^3$, respectively.

\vskip 1cm

\subsection{Atypical non-integral weights}
\subsubsection{The symbol notation}
We recall  the  correspondence between the following symbols and weights in $\h^\ast$ introduced in \cite[Section 3.2]{CW18}.  In the present paper,  we identify weights with symbols as follows
\begin{align} \label{eq::wtid}
&\la \equiv \left[d~ \middle|~    {b}/{2},  (3a+b)/{2}, - (3a+2b)/{2}\right], \text{~for  } \la = d\delta +a\omega_1 +b\omega_2.
\end{align}
We note that $$[d|x,y,z] \equiv d\delta +\frac{2}{3}(y-x)\omega_1 + 2x\omega_2 ,$$ for any complex numbers $x,y,z$ with $x+y+z =0$.

For $\la=d\delta+a\omega_1+b\omega_2$ we have:
\begin{equation}\label{eq::Weylgpcoroot}
\left\{
\begin{array}{ccc}
\langle\lambda,(2\delta)^\vee\rangle=d; & & \\
\langle \la ,\vare_1^\vee\rangle =b, & \langle \la ,\vare_2^\vee\rangle =3a+b, & \langle \la ,(-\vare_3)^\vee\rangle =3a+2b, \\
\langle \la ,(\vare_2-\vare_1)^\vee\rangle =a,& \langle \la ,(\vare_1-\vare_3)^\vee\rangle =a+b,& \langle \la ,(\vare_2-\vare_3)^\vee\rangle =2a+b.
\end{array}
\right.
\end{equation}
 By \cite[Proposition 3.2]{CW18}, the elements $s_i$ ($0\leq i\leq 2$) act as follows:
\begin{align}
& s_0[d|x,y,z] = [-d|x,y,z],\quad s_1[d|x,y,z]= [d|y,x,z], \quad s_2[d|x,y,z]= [d|-x,-z,-y].
\end{align}  Also, for $w\in W_{G_2}$ we define the following notation
\begin{align*}
&w[d|~x,y,z] = [d|~w(x,y,z)].
\end{align*}

\subsubsection{ Atypicality}
%\begin{itemize} \item[(2).] $\Phi_{\ob, \otimes}:=  \{\pm \alpha|~\alpha \in \Phi_{\ob, \otimes}^+\}$ and $\Phi_{\oa, \circ} := \{\pm \alpha | ~\alpha \in \Phi^+_{\oa, \circ}\}.$		\item[(2).]  \end{itemize}

\vskip 0.5cm
Let $\gamma \in \Phi_\oa$. A weight $\la \in \h^\ast$ is called $\gamma$-integral  if $\langle \la-\rho ,\gamma^{\vee} \rangle \in \Z$, i.e.,  $$\langle \la, \gamma^\vee \rangle \in \begin{cases} \frac{1}{2}+\Z, &\mbox{for $\gamma =\pm 2\delta$;}\\
	\Z, &\mbox{for $\gamma \not = \pm 2\delta$.}
\end{cases}$$ The weight $\la \in \h^\ast$ is called {\em non-integral} if $\la$ is not $\gamma$-integral with respect to some $\gamma \in \Phi_{\oa}^+$. We define the associated {\em integral root system} $$\Phi_{[\la]}:=\{\gamma\in \Phi_\oa|~ \la \text{ is }\gamma\text{-integral} \}$$
and the {\em integral Weyl group}
$$W_\la:=\langle s_\gamma| ~\gamma \in \Phi_{[\la]} \rangle.$$
\vskip 0.5cm

For any $\la \in \h^\ast$, we define   $$Z(\la) := \{\gamma\in \Phi_{\oa,\circ}^+|~\langle\la,\gamma^\vee\rangle\in \Z\},$$
$$A(\la):= \{\alpha\in \Phi_{\ob,\otimes}^+|~(\la ,\alpha) =0\}.$$ The weight $\la$ is called {\em typical} (respectively {\em atypical}) if $A(\la)$ is empty (respectively non-empty).

%Finally, for an isotropic positive odd root $\alpha = \delta \pm \vare_{i}$ we define     $\ov\alpha = \delta \mp \vare_{i}$.
\vskip 0.5cm

\subsubsection{Integral Weyl group} \label{subsect::intWgp}
We observe that $\Phi_{[\la]}$ is identical to the integral root system defined in \cite[Section 3.4]{Hu08} when regarded as $\mf g_\oa$-weights since $\rho_\ob =\frac{7}{2}\delta$.  For each $w\in W$, set $$\Lambda^w:=\begin{cases} \delta +2\Z\delta +\Z\Phi_{\oa,\circ}^+, &\mbox{for  $w\notin W_{G_2}$;}\\
\Z\delta +Z\Phi_{\oa,\circ}^+, &\mbox{for $w\in W_{G_2}$,}
\end{cases}$$ where $\Z\Phi_{\oa,\circ}^+ = \Z\vare_1 +\Z\vare_2$ by definition.  The following is an analogue of \cite[Theorem 3.4]{Hu08}.

\begin{prop} \label{prop::IntWeylGp}
	Let $\la \in \h^\ast$. Then we have
	$W_\la =  \{w\in W|~w\la-\la \in \Lambda^w\}.$
\end{prop}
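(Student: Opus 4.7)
The plan is to prove the two inclusions of $W_\la = \{w\in W : w\la - \la \in \Lambda^w\}$ separately, exploiting the direct-product decomposition $W = S_2 \times W_{G_2}$ with $S_2 = \langle s_0\rangle = \langle s_{2\delta}\rangle$, and the observation that $\Phi_{\oa}^+ = \{2\delta\} \sqcup \Phi_{\oa,\circ}^+$, where $\Phi_{\oa,\circ}^+$ is precisely the positive root system of $G_2$. Throughout I would write $\la = d\delta + \la'$ with $\la' \in \C\vare_1 + \C\vare_2$, and $w = s_0^\epsilon w'$ with $\epsilon \in \{0,1\}$ and $w' \in W_{G_2}$; a direct calculation (using $s_0\vare_i = \vare_i$ and $s_0\delta = -\delta$) then gives $w\la - \la = -2\epsilon d\,\delta + (w'\la' - \la')$, which decouples the $\delta$-coefficient from the $G_2$-component.

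For the forward inclusion, I would induct on the number of factors in an expression $w = s_{\gamma_1}\cdots s_{\gamma_k}$ with all $\gamma_i \in \Phi_{[\la]}$. The base case splits according to $\gamma_1$: if $\gamma_1 \in \Phi_{\oa,\circ}^+$, then $s_{\gamma_1}\la - \la = -\langle\la,\gamma_1^\vee\rangle\gamma_1 \in \Z\Phi_{\oa,\circ}^+$ by $\gamma_1$-integrality; if $\gamma_1 = 2\delta$, then equation \eqref{eq::Weylgpcoroot} combined with $2\delta$-integrality gives $\langle\la,(2\delta)^\vee\rangle = d \in \hf + \Z$, so $s_0\la - \la = -2d\,\delta \in \delta + 2\Z\delta$. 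The inductive step expands $w\la - \la = s_\gamma(w''\la - \la) + (s_\gamma\la - \la)$ with $w = s_\gamma w''$, and uses three simple facts: (i) $s_0$ negates $\delta$ and fixes every $\vare_i$; (ii) each $s_\gamma$ with $\gamma \in \Phi_{\oa,\circ}^+$ fixes $\delta$ and preserves $\Z\Phi_{\oa,\circ}^+$; (iii) the coset $\delta + 2\Z\delta$ is closed under negation. A four-way case analysis on whether $\gamma$ equals $2\delta$ and whether $w''$ lies in $W_{G_2}$ then delivers $w\la - \la \in \Lambda^w$.

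For the reverse inclusion, suppose $w\la - \la \in \Lambda^w$, and decompose as above. If $\epsilon = 0$ (so $w = w' \in W_{G_2}$), the hypothesis forces $w'\la' - \la' \in \Z\Phi_{\oa,\circ}^+$; I would then apply \cite[Theorem 3.4]{Hu08} to the classical Lie algebra $G_2$ with weight $\la'$ to place $w'$ in the integral Weyl group $W^{G_2}_{\la'}$. Since every $G_2$-root $\gamma$ pairs identically with $\la$ and $\la'$ (the difference $d\delta$ being orthogonal to $\vare_1,\vare_2$), we have $W^{G_2}_{\la'} \subseteq W_\la$. If $\epsilon = 1$, the $\delta$-coefficient condition $-2d\,\delta \in \delta + 2\Z\delta$ is equivalent to $d \in \hf + \Z$, hence to $2\delta \in \Phi_{[\la]}$, so $s_0 \in W_\la$; the $G_2$-part condition simultaneously gives $w' \in W_\la$ by the same argument, whence $w = s_0 w' \in W_\la$.

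The only subtle point is the precise translation between $2\delta$-integrality of $\la$ and the coset membership $-2d\,\delta \in \delta + 2\Z\delta$, which rests on the identity $\langle\la,(2\delta)^\vee\rangle = d$ of equation \eqref{eq::Weylgpcoroot}. Once this half-integer bookkeeping is settled, the remaining case analysis is routine and the $G_2$-side reduction to Humphreys' classical theorem is immediate.
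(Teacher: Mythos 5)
Your proposal is correct and follows essentially the same strategy as the paper's own proof: both exploit the decomposition $W = \langle s_0\rangle \times W_{G_2}$ to decouple the $\delta$-coefficient from the $G_2$-component, both reduce the $G_2$-part to Humphreys' \cite[Theorem 3.4]{Hu08}, and both handle the $2\delta$-part via the half-integer condition $\langle\la,(2\delta)^\vee\rangle = d \in \hf+\Z$. The only cosmetic difference is that you run an explicit induction to verify the forward inclusion for arbitrary products of generators, whereas the paper checks only the generators $s_\gamma$ and leaves the closure step implicit.
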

\begin{proof} Let $W_{[\la]}:=\{w\in W|~w\la-\la\in \Lambda^w\}.$ 	We first show that $W_\la \subseteq W_{[\la]}$. To see this, let $s_\gamma\in W_{\la}$ with $\gamma \in \Phi_\oa$.
	
	If $\gamma \in \Phi_{_\oa}  \backslash\{\pm 2\delta\}$ with $\langle \la - \rho, \gamma^\vee \rangle \in \Z$, then  $\langle\la, \gamma^\vee \rangle \in \Z$, which implies that $s_\gamma \in W_{[\la]}.$  If $\gamma =\pm 2\delta$ then $\langle \la, \gamma \rangle \in \frac{1}{2}+\Z$. Therefore we have   $s_{\gamma} \la -\la = -\langle \la, \gamma^\vee \rangle 2\delta \in \delta +2\Z\delta\subseteq \Lambda^w$, which implies that $s_{\gamma}\in W_{[\la]}$.
	
	Let $w \in W_{[\la]}$. If $w\in W_{G_2}$ then by \cite[Theorem 3.4]{Hu08} it follows that  $w\in W_\la$. Suppose that $w = s_0w'$ for some $w'\in W_{G_2}$. Then \begin{align}
	&w\la -\la \in \Lambda^w \Leftrightarrow (s_0w'\la -w'\la) + (w'\la -\la) \in \Lambda^w,
	\end{align} which implies that $w'\la -\la \in \Z\Phi^+_{\oa,\circ}$ and $s_0w'\la -w'\la \in \delta +2\Z\delta$. In particular, we have $w'\in W_\la$ and $$2\langle \la,(2\delta)^\vee \rangle\delta= 2\langle w'\la, (2\delta)^\vee \rangle \delta \in \delta +2\Z\delta.$$ This implies that $s_{0} \in W_\la$. We may conclude that $w =s_{0} w' \in W_\la$.  It follows that $W_{[\la]}\subseteq W_\la$. This completes the proof.
\end{proof}

In the sequel, for a given weight $\la \in \h^\ast$, we define $W_\la^0\subseteq \langle s_{0}\rangle$ and $W_\la^1 \subseteq W_{G_2}$ to be subgroups of $W_\la$ so that
\begin{align} \label{eq::210}
W_\la =W_\la^0\times W_\la^1.
\end{align}

\vskip 0.5cm

\subsection{BGG category $\mc O$}
\subsubsection{}

The set of positive roots $\Phi^+$ gives rise to a triangular decomposition $\mf g=\mf n^- \oplus \mf h \oplus \mf n^+$ with corresponding Borel subalgebra $\mf b  =\mf h \oplus \mf n^+$ . Denote by $\mc O$ the BGG category of $\mf g$-modules that are finitely generated over $U(\mf g)$, semisimple as $\mf h$-, and locally finite as $\mf b$-modules.

In the present paper, we denote by $L_\la$, $M_\la$ and $T_{\la}$ the irreducible, Verma and tilting modules of  highest weight $\la -\rho$, respectively. Let $P_\la$ be the projective cover of $L_\la$ in $\mc O$.

For a given module $M\in \mc O$, we denote by $[M:L_\la]$ the multiplicities of $L_\la$ in a Jordan-H\"older series of $M$. A module $M\in \mc O$ is said to have a Verma flag if for some $k\in \N$ it has a filtration by submodules
\[0 = M_0 \subseteq M_1 \subseteq \cdots \subseteq M_k = M,\]
where $M_i/M_{i-1}$ is a Verma module for $1 \leq i \leq k$. The multiplicity of $M_\la$ appearing in such a flag of $M$ is denoted by $(M:M_\la)$. It is well-known that both projective and tilting modules in $\mc O$ have Verma flags.

We have the usual BGG reciprocity and Soergel duality (cf. \cite{Soe98, Br04, CCC19}):
\begin{align}
&(T_{-\la}: M_{-\mu}) = (P_\la:M_\mu) =[M_{\mu}:L_\la],
\end{align} for $\la,\mu \in \h^\ast.$

 \subsubsection{} \label{subsect::24} It is known that $\mc O$ is a highest weight category with standard objects $M_\la$ in the sense of \cite{BS18}, see  \cite[Section 3]{CCC19} for a complete treatment. We define $\mc O^{(i)}$ to be the BGG category corresponding to the simple system $\Pi^i$ given in Section \ref{subsect::simplesys}, for $1\leq i\leq 3$. Note that $\mc O^{(i)}$ is equivalent to $\mc O$ via the identity functor as abelian categories.

Let $S\subseteq \Pi^i$. Then $S$ gives rise to a simple system and a Borel subalgebra $\mf b^{\mf l_S}$ of the corresponding Levi subalgebra $\mf l_S$ (see Section \ref{sect::51} for examples). We let $\mc O^{\mf l_S}$ denote the corresponding BGG category of $\mf l_S$-modules. For a given weight $\la \in \h^\ast$, we define $\mc O_\la^{\mf l_S}$ to be the block of $\mc O^{\mf l_S}$ containing the irreducible $\mf l_S$-module of $\mf b^{\mf l_S}$-highest weight $\la -\rho$. In particular, we denote by $\mc O_\la$ the (indecomposable) block of $\mc O$ containing $L_\la$. We let $\text{Irr}\mc O_{\la}:=\{\mu\in\h^*|L_\mu\in\mc O_\la\}$.

In the present paper, two highest weight categories $(\mc A,\leq_{\mc A})$ and $(\mc B, \leq_{\mc B})$ are called equivalent as height weight categories if there is an equivalence of $\mc A$ and $\mc B$ preserving standard objects and their orderings $\leq_{\mc A}$ and $\leq_{\mc B}$.

 \subsubsection{} \label{subsect::ConJanFor} Recall that for any $\la\in \h^\ast$, we have the following super analogue of Jantzen sum formula \cite{Gor04,Mu12}.
 \begin{prop} \label{prop::JSF}
 	\label{prop:JSF}
  The Verma module $M_\la$ has a finite filtration of submodules,
 	\begin{align*}
 	M_\la\supset M^1_\la\supset M^2_\la\supset \cdots,
 \end{align*}
 	such that
 	\begin{align}
 	\label{jantzen:sum}
 	\begin{split}
 	\sum_{i>0} &{\rm ch} M^i_\la
 	=  \sum_{\alpha\in \Phi^+_{\bar 0,\tikz\draw (0,0) circle (.3ex);},\langle\la,\alpha^\vee\rangle\in\N\setminus\{0\}} {\rm ch}M_{s_\alpha\la}
 	+ \sum_{\langle\la,(2\delta)^\vee\rangle\in\hf+\N} {\rm ch}M_{s_{2\delta}\la}
 	+ \sum_{\gamma\in \Phi^+_{\bar 1,\otimes},(\la,\gamma)=0} \frac{{\rm ch}M_{\la-\gamma}}{1+e^{-\gamma}}.
 	\end{split}
 	\end{align}
 \end{prop}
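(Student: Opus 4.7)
The plan is to adapt Jantzen's deformation argument to the super setting, following \cite{Gor04,Mu12}. First, I would pick a generic shift $\tau \in \h^\ast$ and, working over $A=\mathbb{C}[[t]]$, form the deformed Verma module $\widetilde M_{\la+t\tau}$ with highest weight vector $v_\la$. Equipping $\widetilde M_{\la+t\tau}$ with its unique $A$-bilinear contravariant form $F$ normalized by $F(v_\la,v_\la)=1$, I would define
\[
\widetilde M^{\,i} \,:=\, \{v\in \widetilde M_{\la+t\tau}\mid F(v,\widetilde M_{\la+t\tau})\subseteq t^{i}A\},
\]
and let $M^i_\la$ denote the image of $\widetilde M^{\,i}$ in $M_\la = \widetilde M_{\la+t\tau}/t\widetilde M_{\la+t\tau}$. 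Since $F$ is weight-preserving and generically non-degenerate, this produces the desired finite descending filtration on $M_\la$.

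Next, the standard Jantzen lemma extended to the super setting yields
\[
\sum_{i\geq 1}\ch M^i_\la \,=\, \sum_\mu v_t\bigl(\det F|_{\widetilde M_{\la+t\tau}^\mu}\bigr)\,e^\mu,
\]
where $v_t$ denotes $t$-adic valuation. The right-hand side is evaluated using the super Shapovalov determinant formula for contragredient Lie superalgebras: up to a non-zero scalar, $\det F_\mu$ factors as a product of linear forms in $\la+t\tau$, one family for each positive root. The even roots $\alpha\in \Phi^+_{\oa,\circ}$ contribute the classical Kac-Kazhdan factors $\langle\la+t\tau,\alpha^\vee\rangle-n$, $n\in\mathbb{N}_{>0}$; the anisotropic odd root $\delta$ together with $2\delta$ generates an $\mathfrak{osp}(1|2)$-triple, whose Shapovalov factors vanish at $t=0$ precisely on the half-integral locus $\langle\la,(2\delta)^\vee\rangle\in\hf+\mathbb{N}$; and each isotropic odd root $\gamma\in\Phi^+_{\ob,\otimes}$ contributes the linear factor $(\la+t\tau,\gamma)$.

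Collecting the $t$-adic valuations and using the Kostant/super partition identities to repackage them as Verma characters produces the three summands on the right of \eqref{jantzen:sum}. The first two sums follow from the classical argument since both $\alpha$ and $2\delta$ are even roots acting via $\mathfrak{sl}(2)$-subalgebras. The third sum arises because, when $(\la,\gamma)=0$ for an isotropic $\gamma$, one has $(\la-k\gamma,\gamma)=0$ for every $k\geq 0$, so the entire tower $\la-\gamma,\la-2\gamma,\ldots$ contributes; summing these contributions with the super partition multiplicities collapses to the closed form $\ch M_{\la-\gamma}/(1+e^{-\gamma})$.

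The genuine obstacle is not the construction but the bookkeeping: one must separate the contribution of the anisotropic root $\delta$ from that of the even root $2\delta\in\Phi^+_\oa$ to avoid double-counting, and correctly track the shift by $\rho_\ob=\tfrac{7}{2}\delta$ that produces the $\hf$-shift in the second sum. Since the statement is a direct specialization of the general super Jantzen sum formula to the contragredient Lie superalgebra $G(3)$, once the Shapovalov determinant is in hand the identity follows by inspection of the factorization.
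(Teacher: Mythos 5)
The paper does not prove Proposition~\ref{prop:JSF}: it is stated purely as a recollection from the cited references \cite{Gor04,Mu12}, where the Jantzen deformation and super Shapovalov determinant argument is carried out. Your sketch reproduces precisely that route, and the structural points are correct: the $t$-adic filtration via the contravariant form, the Kac--Kazhdan type linear factors coming from the even roots in $\Phi^+_{\oa,\circ}$, the $\mathfrak{osp}(1|2)$-pair $\{\delta,2\delta\}$ producing the half-odd-integer condition $\langle\la,(2\delta)^\vee\rangle\in\hf+\N$ (since singular vectors in the $\mathfrak{osp}(1|2)$ direction occur only at odd levels), and the linear factor $(\la,\gamma)$ for each isotropic $\gamma$ whose tower sums to $\ch M_{\la-\gamma}/(1+e^{-\gamma})$. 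So your proposal is correct and amounts to unwinding the reference that the paper invokes without proof; there is no alternative approach in the paper to compare it against.
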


The following proposition, which is essentially \cite[Proposition 2.2]{CW18} and which was proved by applying the above super analogue of Jantzen sum formula, is used in a crucial way in the computation of  characters of tilting modules in the sequel.
\begin{prop} \label{prop:flags}
	Let $\la\in \h^\ast$, with $\alpha,\alpha_i\in\Phi^+_{\bar 0}$ ($1\le i\le k$), and $\beta,\gamma\in\Phi^+_{\bar 1,\otimes}$. Let $w=s_{\alpha_k}\cdots s_{\alpha_2} s_{\alpha_1}\in W$.
	\begin{itemize}
		\item[(1)]
		Suppose that $\langle\la,\alpha^\vee\rangle\in \N\setminus\{0\}$, for $\alpha\in\Phi^+_{\bar 0,\tikz\draw (0,0) circle (.3ex);}$, or $\langle\la,\alpha^\vee\rangle\in \hf+\N$, for $\alpha=2\delta$. Then $(T_\la: M_{s_{\alpha} \la})>0$.
		
		\item[(2)]
		Suppose that
		$\langle s_{\alpha_{i-1}}\cdots s_{\alpha_{1}}\la,\alpha_i^\vee\rangle\in \N\setminus\{0\}$, for $\alpha_i\in\Phi^+_{\bar 0,\tikz\draw (0,0) circle (.3ex);}$, or $\langle s_{\alpha_{i-1}}\cdots s_{\alpha_{1}}\la,\alpha_i^\vee\rangle\in \hf+\N$, for $\alpha_i=2\delta$,
		for all $i=1,\ldots,k$. Then $(T_\la:M_{w\la})>0$.
		
		\item[(3)]
		Suppose that $(\la,\beta)=0$. Then $(T_\la:M_{\la-\beta})>0$.
		
		\item[(4)]
		Suppose that $(\la,\beta)=0$ and furthermore $\langle s_{\alpha_{i-1}}\cdots s_{\alpha_{1}}(\la-\beta),\alpha_i^\vee\rangle \in \N\setminus\{0\}$, for $\alpha_i\in\Phi^+_{\bar 0,\tikz\draw (0,0) circle (.3ex);}$, or $\langle s_{\alpha_{i-1}}\cdots s_{\alpha_{1}}(\la-\beta),\alpha_i^\vee\rangle \in \hf+\N$, for $\alpha_i=2\delta$,
		for all $i=1,\ldots,k$.
		Then $(T_\la:M_{w(\la-\beta)})>0$.
		
		\item[(5)]
		Suppose that $(\la,\beta)=0$. If $(\la-\beta,\gamma)=0$
		and $\beta\not\ge\gamma$, then $(T_\la:M_{\la-\beta-\gamma})>0$.
		
		\item[(6)]
		Suppose that $(\la,\beta)=0$. If $(\la-\beta,\gamma)=0$ with
		 $\beta\not\ge\gamma$ and $\langle s_{\alpha_{i-1}}\cdots s_{\alpha_{1}}(\la-\beta-\gamma),\alpha_i^\vee\rangle \in \N\setminus\{0\}$, for $\alpha_i\in\Phi^+_{\bar 0,\tikz\draw (0,0) circle (.3ex);}$ or $\langle s_{\alpha_{i-1}}\cdots s_{\alpha_{1}}(\la-\beta-\gamma),\alpha_i^\vee\rangle \in \hf+\N$, for $\alpha_i=2\delta$, for all $i=1,\ldots,k$,
		then $(T_\la:M_{w(\la-\beta-\gamma)})>0$.
\item[(7)] Suppose $\langle \la, \alpha^{\vee}\rangle\in \N\setminus\{0\}$, for $\alpha\in\Phi^+_{\bar 0,\tikz\draw (0,0) circle (.3ex);}$, or $\langle \la, \alpha^{\vee}\rangle\in \hf+\N$, for $\alpha=2\delta$. If $(s_\alpha(\la),\gamma)=0$ and $\langle \la, \alpha^{\vee}\rangle\alpha\not\ge\gamma$, then $(T_\la:M_{s_\alpha(\la)-\gamma})>0$.
\item[(8)] Suppose $\langle \la, \alpha^{\vee}\rangle\in \N\setminus\{0\}$, for $\alpha\in\Phi^+_{\bar 0,\tikz\draw (0,0) circle (.3ex);}$, or $\langle \la, \alpha^{\vee}\rangle\in \hf+\N$, for $\alpha=2\delta$. Assume further that $(s_\alpha(\la),\gamma)=0$, $\langle \la, \alpha^{\vee}\rangle\alpha\not\ge\gamma$ and $\langle s_{\alpha_{i-1}}\cdots s_{\alpha_{1}}(s_\alpha(\la)-\gamma),\alpha_i^\vee\rangle \in\N\setminus\{0\}$, for $\alpha_i\in\Phi^+_{\bar 0,\tikz\draw (0,0) circle (.3ex);}$, or $\langle s_{\alpha_{i-1}}\cdots s_{\alpha_{1}}(s_\alpha(\la)-\gamma),\alpha_i^\vee\rangle\in \hf+\N$, for $\alpha_i=2\delta$, for all $i=1,\ldots,k$, then we have $(T_\la:M_{w(s_\alpha(\la)-\gamma)})>0$.	
\end{itemize}
\end{prop}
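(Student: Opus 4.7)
The plan is to translate each of the eight statements into a Jordan--H\"older multiplicity via the BGG reciprocity recalled above: since $(T_\la:M_\mu)=[M_{-\mu}:L_{-\la}]$, it is enough in every case to exhibit $L_{-\la}$ as a composition factor of the Verma module $M_{-\mu}$. The workhorses are two consequences of Proposition \ref{prop::JSF}. An \emph{even step}: if $\langle \nu,\alpha^\vee\rangle\in \N\setminus\{0\}$ for $\alpha\in\Phi_{\oa,\circ}^+$, or $\langle\nu,(2\delta)^\vee\rangle\in\hf+\N$, then the term $\ch M_{s_\alpha \nu}$ on the right-hand side of the sum formula at $\nu$ yields an embedding $M_{s_\alpha\nu}\hookrightarrow M_\nu$, so every composition factor of $M_{s_\alpha\nu}$ is inherited by $M_\nu$. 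An \emph{odd step}: if $(\nu,\gamma)=0$ for $\gamma\in\Phi_{\ob,\otimes}^+$, then applying $f_\gamma$ to the highest weight vector of $M_\nu$ produces a singular vector of weight $\nu-\gamma$ and hence an embedding $M_{\nu-\gamma}\hookrightarrow M_\nu$, corresponding to the fractional term $\ch M_{\nu-\gamma}/(1+e^{-\gamma})$ in the sum formula.

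Given these two tools, each part is proved by constructing a chain of embeddings ending at $M_{-\mu}$ and beginning at $M_{-\la}$, and then invoking $[M_{-\la}:L_{-\la}]=1$. Parts (1) and (2) use only iterated even steps along $\nu_i=-s_{\alpha_i}\cdots s_{\alpha_1}\la$: the assumption $\langle s_{\alpha_{i-1}}\cdots s_{\alpha_1}\la,\alpha_i^\vee\rangle>0$ is precisely $\langle\nu_i,\alpha_i^\vee\rangle>0$, giving $M_{\nu_{i-1}}\hookrightarrow M_{\nu_i}$. Parts (3) and (4) insert one odd step at the start: $(-\la+\beta,\beta)=-(\la,\beta)+(\beta,\beta)=0$ gives $M_{-\la}\hookrightarrow M_{-\la+\beta}$, after which even reflections iterate up to $M_{-w(\la-\beta)}$. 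Parts (5) and (6) chain two consecutive odd steps, the second justified by $(-\la+\beta+\gamma,\gamma)=-(\la-\beta,\gamma)=0$; parts (7) and (8) begin with an even step $M_{-\la}\hookrightarrow M_{-s_\alpha\la}$ and follow with an odd step using $(-s_\alpha\la+\gamma,\gamma)=-(s_\alpha\la,\gamma)=0$, after which further even reflections are applied.

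The main technical point to control is the interpretation of the fractional term $\ch M_{\nu-\gamma}/(1+e^{-\gamma})$, which, formally expanded, is an alternating series $\sum_{k\ge 0}(-1)^k\ch M_{\nu-(k+1)\gamma}$. To guarantee that $L_{-\la}$ is really produced as a composition factor in parts (5)--(8), one must verify that no cancellation occurs, either from higher $k$ in this expansion or from coincidences with other summands of the formula at the same weight. This is exactly the role of the hypotheses $\beta\not\ge\gamma$ in (5)--(6) and $\langle\la,\alpha^\vee\rangle\alpha\not\ge\gamma$ in (7)--(8): they ensure that the weight at which $L_{-\la}$ arises is reached in only one way in the sum formula, so the positive contribution persists. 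Once this is established for the base one- or two-step cases, the subsequent iterated even reflections simply transport $L_{-\la}$ along the chain of Verma submodules, and the claimed inequalities on $(T_\la:M_\cdot)$ follow immediately.
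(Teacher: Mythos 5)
The paper's ``proof'' of this proposition is essentially a citation: parts (1)--(6) are quoted verbatim from \cite[Proposition 2.2]{CW18}, and parts (7)--(8) are declared to be analogous variants of (5)--(6). So you are in effect reconstructing an argument that the paper itself does not spell out. Your overall strategy --- rewrite each assertion via the Soergel/BGG duality $(T_\la:M_\mu)=[M_{-\mu}:L_{-\la}]$ and then exhibit $L_{-\la}$ as a composition factor of $M_{-\mu}$ by a chain of singular vectors governed by the Jantzen sum formula --- is the right one, and is almost certainly the approach of \cite{CW18}.

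There are, however, two genuine problems. First, the two ``workhorses'' are not actually consequences of Proposition \ref{prop::JSF} as you claim. The Jantzen sum formula is a character identity; it does not, by itself, produce a map of modules. For an even root $\alpha$ and $\langle\nu,\alpha^\vee\rangle\in\N\setminus\{0\}$ (or $\hf+\N$ for $\alpha=2\delta$), the embedding $M_{s_\alpha\nu}\hookrightarrow M_\nu$ is a Shapovalov/Verma-type theorem which needs the machinery of \cite{Mu17}; it is not immediate, particularly for $\alpha=2\delta$, which in the standard simple system for $G(3)$ is not supported on the even simple roots alone. Likewise, for the odd step your justification --- ``applying $f_\gamma$ to the highest weight vector produces a singular vector'' --- is only valid when $\gamma$ is a simple isotropic root (so $\gamma=\delta+\vare_3$ for the standard Borel); for the remaining $\gamma\in\Phi^+_{\ob,\otimes}$ one must invoke the Shapovalov element $\theta_\gamma$, not $f_\gamma$. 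These points are repairable, but as written the proposal is not rigorous.

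Second, and more seriously, your account of the role of the hypotheses $\beta\not\ge\gamma$ in (5)--(6) and $\langle\la,\alpha^\vee\rangle\alpha\not\ge\gamma$ in (7)--(8) is misdirected. You frame the worry as cancellation in the formal alternating expansion of $\ch M_{\nu-\gamma}/(1+e^{-\gamma})$; but by \cite[Theorem 1.10]{Mu17} (used elsewhere in the paper) this quotient is itself the character of an honest $\g$-module, hence already a \emph{non-negative} combination of simple characters --- there is nothing to cancel in that series. Moreover, if the chain of embeddings you construct genuinely existed, no cancellation issue could arise at all, since a submodule inherits composition factors; you cannot simultaneously claim a chain of embeddings and a possible cancellation. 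The actual role of the condition $\beta\not\ge\gamma$ is to guarantee that the chain of singular vectors does not collapse. Concretely, if $\beta=\gamma$ (which violates $\beta\not\ge\gamma$) all the remaining hypotheses of (5) can still hold, but the composed Shapovalov vector one would need involves $f_\gamma^2=0$, and the asserted conclusion $(T_\la:M_{\la-2\beta})>0$ is simply false --- as is already visible in rank one (e.g.\ in $\gl(1|1)$, where a Verma module has only two composition factors). So the inequality $\beta\not\ge\gamma$ is precisely what makes the second odd step compatible with the first, and that is the nontrivial content of parts (5)--(8). You correctly flag this as ``the main technical point to control'' but then assert it without argument; with the reasoning you give (misattributing it to non-cancellation in a formal series) the key point is left unaddressed.
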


\begin{proof}
Parts (1)--(6) are proved in \cite[Proposition 2.2(1)--(6)]{CW18}. Parts (7) and (8) are variants of Parts (5) and (6), respectively, and are proved analogously.
\end{proof}

\subsubsection{}\label{sec:comp:factor}

In this Section \ref{sec:comp:factor} we let $\g=\g_{\oa}\oplus \g_{\ob}$ be a finite-dimensional simple Lie superalgebra. Suppose that $\g_\oa=\mf a\oplus\mf c$ is a direct sum of reductive Lie algebras so that $W=W^{\mf a}\times W^{\mf c}$. Let $\h$, $\h^{\mf a}$ and $\h^{\mf c}$ be, respectively, Cartan subalgebras of $\g$, $\mf a$ and $\mf c$ with $\h=\h^{\mf a}\oplus\h^{\mf c}$. We shall continue to use superscripts to denote the corresponding elements in $\h$ and $\h^*$ with respect to this decomposition. For $\mu\in{\h^\mf c}^*$ we denote by $M^{\mf c}_\mu$ and $L^{\mf c}_\mu$ the Verma and irreducible $\mf c$-modules of highest weight $\mu-\rho^{\mf c}$, respectively.

\begin{prop}\label{prop:comp:factor}
Suppose $\Pi$ is a simple system of $\g$ containing a simple system for $\mf c$ so that we have compatible triangular decompositions for $\g$ and $\mf c$. Then, for $\la\in\h^*$ and $\tau\in W^{\mf c}$ we have
\begin{align*}
[M_\la:L_{\tau\la}]=[M^{\mf c}_{\la^{\mf c}}:L^{\mf c}_{\tau\la^{\mf c}}].
\end{align*}
\end{prop}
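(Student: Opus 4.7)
My plan is to identify $M^{\mf c}_{\la^{\mf c}}$ with an explicit $\mf c$-submodule of $M_\la$, construct an exact functor isolating it, and compare composition multiplicities directly.

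\textbf{Step 1: the $\mf c$-submodule $N$.} Let $v_\la \in M_\la$ be a highest weight vector and set $N := U(\mf c) v_\la$. Since every $\alpha \in \Pi^{\mf c}\subseteq \Pi$ is even (as $\mf c \subseteq \g_\oa$), one has $\langle \rho,\alpha^\vee\rangle = 1 = \langle \rho^{\mf c},\alpha^\vee\rangle$. Consequently $\rho - \rho^{\mf c}$ pairs trivially with every coroot of $\mf c$, forcing the $\mf c$-component of $\rho$ to equal $\rho^{\mf c}$, so that $v_\la$ carries $\mf c$-weight $\la^{\mf c}-\rho^{\mf c}$. PBW then yields $N\cong M^{\mf c}_{\la^{\mf c}}$. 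Next, pick $\xi \in \h^{\mf a}$ with $\alpha(\xi)>0$ for all $\alpha \in \Pi\setminus\Pi^{\mf c}$, which is possible under our compatible triangular decomposition. Then $\beta(\xi)>0$ for every $\beta \in \Phi^+\setminus\Phi^{\mf c,+}$, so the corresponding negative-root operators strictly lower the $\h^{\mf a}$-weight while $\mf c^-$ preserves it. It follows that $N$ is precisely the top $\h^{\mf a}$-eigenspace $M_\la^{\eta_0}$ for $\eta_0 := (\la-\rho)|_{\h^{\mf a}}$.

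\textbf{Step 2: an exact functor and the simplicity claim.} Define $F(V):=V^{\h^{\mf a}=\eta_0}$ for $V\in \mc O$; since it is a direct summand under the semisimple $\h^{\mf a}$-action, $F$ is exact. For a simple module $L_\mu\in \mc O$, $F(L_\mu)$ is nonzero iff $\mu^{\mf a}=\la^{\mf a}$, in which case the same weight analysis gives $F(L_\mu)=U(\mf c^-)\bar v_\mu$, a $\mf c$-quotient of $M^{\mf c}_{\mu^{\mf c}}$. The crucial claim is that this quotient is simple, i.e.\ $F(L_\mu)\cong L^{\mf c}_{\mu^{\mf c}}$. If it were not, it would contain a $\mf c$-singular vector $w$ of $\mf c$-weight strictly less than $\mu^{\mf c}-\rho^{\mf c}$. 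Since $w$ sits at the maximal $\h^{\mf a}$-weight in $L_\mu$ and every element of $\mf u^+:=\bigoplus_{\beta \in \Phi^+\setminus\Phi^{\mf c,+}} \g_\beta$ strictly raises the $\h^{\mf a}$-weight, one must have $\mf u^+ w=0$; together with $\mf c^+ w=0$ this forces $\mf n^+ w=0$, so $w$ would be a $\g$-singular vector of $L_\mu$ at non-highest weight, contradicting the simplicity of $L_\mu$ as a $\g$-module.

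\textbf{Step 3: counting.} Apply $F$ to a composition series $0=M_0\subset \cdots \subset M_k=M_\la$ with subquotients $L_{\mu_i}$. Exactness and Step 2 give, after dropping zero subquotients, a $\mf c$-composition series of $N\cong M^{\mf c}_{\la^{\mf c}}$ with simple subquotients $L^{\mf c}_{\mu_i^{\mf c}}$ indexed by those $i$ with $\mu_i^{\mf a}=\la^{\mf a}$. Since $\tau \in W^{\mf c}$ fixes $\la^{\mf a}$, the conditions $\mu_i^{\mf c}=\tau\la^{\mf c}$ and $\mu_i^{\mf a}=\la^{\mf a}$ together are equivalent to $\mu_i=\tau\la$, yielding
$[M^{\mf c}_{\la^{\mf c}}:L^{\mf c}_{\tau\la^{\mf c}}]=\#\{i:\mu_i=\tau\la\}=[M_\la:L_{\tau\la}]$, as required. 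The main obstacle is the simplicity claim in Step 2: here the simplicity of $L_\mu$ as a $\g$-module (rather than as a $\mf c$-module) is used in a decisive way through the fact that $\mf u^+$ cannot raise the $\h^{\mf a}$-weight within $L_\mu$ beyond its maximum.
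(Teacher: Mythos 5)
Your proof is correct, and it takes a genuinely different route from the paper's. The paper works inside $M_\la$ directly: it sends a primitive vector $v=uv_{\la-\rho}$ (with $u\in U(\mf n^{\mf c,-})$) to the vector $uv^{\mf c}_{\la^{\mf c}-\rho^{\mf c}}\in M^{\mf c}_{\la^{\mf c}}$ and back, and then runs an inductive ``peeling'' argument (first over singular vectors, then over primitive vectors whose $U(\mf n^+)$-orbit is supported on earlier ones, and so on) to show this correspondence is a bijection. You instead build a global exact functor $F=(-)^{\h^{\mf a}=\eta_0}$ out of the whole category, identify $F(M_\la)\cong M^{\mf c}_{\la^{\mf c}}$, and prove the key point --- that $F$ sends simples to simples or zero --- by observing that any lower $\mf c$-singular vector in $F(L_\mu)$ would be $\mf u^+$-annihilated for $\h^{\mf a}$-degree reasons and hence a nontrivial $\g$-singular vector in $L_\mu$, which is impossible. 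The two arguments ultimately lean on the same geometric fact (elements of $\Phi^+\setminus\Phi^{\mf c,+}$ strictly lower the $\mf a$-component), but yours packages it as exactness plus $F(\text{simple})=\text{simple or }0$ and so avoids the somewhat delicate induction over levels of primitivity entirely; applying $F$ to a single Jordan--H\"older series then finishes the count in one stroke. One small thing worth being explicit about: the identification $\rho|_{\h^{\mf c}}=\rho^{\mf c}$ (which you derive from $\langle\rho,\alpha^\vee\rangle=1$ for even simple $\alpha$) is clean when $\mf c$ is semisimple, which covers all the cases used in this paper; for reductive $\mf c$ with a nontrivial center this identity only pins down $\rho|_{\h^{\mf c}}$ up to a central character, but since that discrepancy is $W^{\mf c}$-invariant it cancels from both sides of the multiplicity identity anyway.
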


\begin{proof}
Denote by $v_{\la-\rho}$ and $v^{\mf c}_{\la^\mf c-\rho^c}$ some highest weight vectors in $M_\la$ and $M^{\mf c}_{\la^\mf c}$, respectively.

The proposition follows from the following claim:
Suppose that $v_{\tau(\la+\rho)-\rho}$ is a nonzero primitive vector in $M_\la$ of weight $\tau(\la+\rho)-\rho$ so that
\begin{align}\label{vec:prim}
v_{\tau(\la+\rho)-\rho}=u v_{\la-\rho}, \quad u\in U({\mf n^{\mf c}}^-).
\end{align}
Then the vector
$u v^{\mf c}_{\la^\mf c-\rho^\mf c}\in M^{\mf c}_{\la^\mf c}$ is a nonzero primitive vector in $M^{\mf c}_{\la^\mf c}$ of weight $\tau(\la^\mf c+\rho^\mf c)-\rho^\mf c$. On the other hand, if $u v^{\mf c}_{\la^\mf c-\rho^\mf c}$, for $u\in U(\mf n^\mf c_-)$, is a nonzero primitive vector in $M^{\mf c}_{\la^\mf c}$ of weight $\tau(\la^\mf c+\rho^\mf c)-\rho^\mf c$, then it
determines a nonzero primitive vector in $M_\la$ of weight $\tau(\la+\rho)-\rho$ by the formula \eqref{vec:prim}.

The fact that primitive vectors in $M^\mf c_{\la^c}$ give rise to primitive vectors in $M_\la$ is clear. Below, we shall give an argument for the other direction. The claim is easily seen to hold for singular vectors. Now, suppose that we have a primitive vector $w$ of weight $\tau(\la+\rho)-\rho$ such that $U(\mf n^+)w$ lies in the span of modules generated by singular vectors. Then, by weight consideration we can assume that these singular vectors all have weights $\sigma(\la+\rho)-\rho$, $\sigma\in W^\mf c$, so that they give rise to corresponding singular vectors in $M^\mf c_{\la^c}$ as well. From this, it follows that $w$ is a primitive vector in $M^\mf c_{\la^c}$ as well. Next, suppose that $w'$ is a primitive vector such that  $U(\mf n^+)w'$ lies in the span of modules generated by singular vectors and primitive vectors as $w$ above. Similarly as before, we see that $w'$ is a primitive vector in $M^\mf c_{\la^c}$. Continuing this way, we establish the claim.
\end{proof}

\vskip 0.5cm

\subsubsection{} \label{subsubsect::Cas}

 Let $\la =[d|x,y,z]$ be a weight in the symbol notation. We compute the eigenvalue $c_\la$ of the Casimir operator on the Verma module $M_\la$ and get
 \begin{align}
 &c_\la  = (\la, \la) -(\rho ,\rho) = -2d^2 +\frac{4}{3}x^2 +\frac{4}{3}y^2 +\frac{4}{3}z^2 -(\rho ,\rho).
 \end{align} Thus, there exists a   element $\Omega$ in the center of $U(\mf g)$   that  acts on $M_\la$ by the scalar $3d^2 -2x^2-2y^2 -2z^2$, for   $\la = [d|~x,y,z].$

 %%%%%%%%%%%%%%%%%%%%%%%%%%%%%%%%%%%%%%%%%%%%%%%%%%%%%%%%%%%%%%%%%%%%%%%%%%%%%%%%%%%

\section{Classification of blocks} \label{sect::clablock}
We describe atypical blocks in this section.
\subsection{Block decompositions}

\begin{lem} \label{lem::11} Let $\la\in \h^\ast$ with  $\alpha, \beta \in A(\la)$ and $\alpha\not=\beta$. Then there exists $\sigma=s_{\gamma} \in W$ with $\gamma \in \Phi_\oa^+$ such that $\sigma \alpha =\beta$ and $\sigma \la =\la$.
\end{lem}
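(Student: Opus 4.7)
The plan is to argue by a direct case analysis on the pair $\alpha,\beta \in \Phi_{\ob,\otimes}^+=\{\delta\pm\vare_i: i=1,2,3\}$. In each case I will exhibit a single even positive root $\gamma$ such that $s_\gamma\alpha=\beta$ and then deduce $s_\gamma\la=\la$ from the hypotheses $(\la,\alpha)=0=(\la,\beta)$. Since the defining condition $s_\gamma\la=\la$ is equivalent to $(\la,\gamma)=0$, the whole point is to write $\gamma$ as a linear combination of $\alpha$, $\beta$ and vectors already known (from $\vare_1+\vare_2+\vare_3=0$) to be orthogonal to $\la$.

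Up to swapping $\alpha,\beta$, the pair falls into one of three shapes. In \emph{Case 1}, $\alpha$ and $\beta$ share a common sign in front of the $\vare$-summand but involve different indices; writing $\alpha=\delta\pm\vare_i$, $\beta=\delta\pm\vare_j$ with $i\ne j$, the choice $\gamma=\vare_j-\vare_i\in\Phi_{\oa}^+$ (a long $G_2$-root) gives $s_\gamma\alpha=\beta$ directly, and $(\la,\gamma)=\pm((\la,\beta)-(\la,\alpha))=0$. In \emph{Case 2}, $\alpha=\delta+\vare_i$ and $\beta=\delta-\vare_i$ for some $i$. Here the short $G_2$-root $\gamma=\vare_i$ works: $\langle\delta+\vare_i,\vare_i^\vee\rangle=2$, hence $s_{\vare_i}(\delta+\vare_i)=\delta-\vare_i$. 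Adding and subtracting $(\la,\alpha)=0$ and $(\la,\beta)=0$ forces both $(\la,\delta)=0$ and $(\la,\vare_i)=0$, so $s_{\vare_i}\la=\la$. In \emph{Case 3}, $\alpha=\delta+\eta\vare_i$ and $\beta=\delta-\eta\vare_j$ with $i\ne j$ and $\eta\in\{\pm1\}$; let $k$ be the remaining index. Using $\vare_1+\vare_2+\vare_3=0$, one has $\beta-\alpha=\eta\vare_k$, and a quick reflection computation shows $s_{\vare_k}\alpha=\beta$. From $(\la,\alpha)=0=(\la,\beta)$ we get $(\la,\vare_i)=-\eta(\la,\delta)=-(\la,\vare_j)$, so $(\la,\vare_i)+(\la,\vare_j)=0$; combined with $(\la,\vare_1+\vare_2+\vare_3)=0$ this yields $(\la,\vare_k)=0$, so $s_{\vare_k}$ fixes $\la$.

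The only mild obstacle is Case 3: it is the case where a naive attempt (taking $\gamma$ to be an even root in the span of $\alpha,\beta$) fails, because $\alpha+\beta=2\delta$ and $\alpha-\beta=\eta(\vare_i+\vare_j)$ give reflections $s_{2\delta}$ and $s_{\vare_i+\vare_j}=s_{-\vare_k}=s_{\vare_k}$ that act quite differently on $\alpha,\beta$. The right choice is the short $G_2$-root $\vare_k$ in the third direction, and its orthogonality to $\la$ is detected not from a single linear combination of $(\la,\alpha)$ and $(\la,\beta)$ but via the ambient identity $\vare_1+\vare_2+\vare_3=0$. In each of the three cases the exhibited $\gamma$ is (or can be replaced by its negative to be) in $\Phi_{\oa}^+$, completing the proof.
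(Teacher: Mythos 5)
Your proof is correct and follows essentially the same three-way case split as the paper's proof (with the cases merely relabeled): same sign gives $\gamma=\vare_j-\vare_i$, same index with opposite signs gives $\gamma=\vare_i$, and opposite signs with different indices gives $\gamma=\vare_k$ for the third index, using $\vare_1+\vare_2+\vare_3=0$ to deduce orthogonality. The extra discussion about replacing $\gamma$ by $-\gamma$ when needed and about why the naive linear combinations fail in Case 3 is fine but adds nothing beyond what is in the paper.
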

\begin{proof} We shall prove this case by case.  \\
	(1). Suppose that $c=\pm1$ and $\alpha =\delta +c\vare_i$ and $\beta =\delta +c\vare_j$ with $i\neq j$. It follows that $(\la, \vare_i-\vare_j)=0$. We  set $\sigma =s_{\vare_i -\vare_j}$ as desired.
	\\
	(2). Suppose that $\alpha =\delta+\vare_i$ and $\beta =\delta -\vare_j$, for $i\neq j$. Let $1\leq k \leq 3 $ such that $k\neq i,j$. Then $(\la,\vare_k)=-(\la,\vare_i+\vare_j)=0$.  Set  $\sigma =s_{\vare_k}$, as desired. \\
	(3). Suppose that $\alpha =\delta +\vare_i$ and $\beta =\delta-\vare_i$. Then $(\la,\vare_i) =0$. In this case we set $\sigma =s_{\vare_i}$, as desired.
\end{proof}

\begin{cor}\label{cor::2}
	 Let $\la\in \h^\ast$ and $\alpha, \beta \in A(\la)$. Then $$\{W(\la+k\alpha)|~k\in \C\} = \{W(\la+k\beta)|~k\in \C\}.$$
\end{cor}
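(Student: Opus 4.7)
The plan is to deduce the corollary directly from Lemma \ref{lem::11}. By that lemma, since $\alpha,\beta\in A(\la)$, there exists an element $\sigma=s_\gamma\in W$ (with $\gamma\in\Phi_\oa^+$) that simultaneously satisfies $\sigma\la=\la$ and $\sigma\alpha=\beta$. The key observation is that these two conditions combine linearly: for any $k\in\C$, by linearity of $\sigma$ on $\h^\ast$,
\begin{align*}
\sigma(\la+k\alpha)=\sigma\la+k\,\sigma\alpha=\la+k\beta.
\end{align*}

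Consequently, for every fixed $k\in\C$, the weight $\la+k\beta$ lies in the $W$-orbit of $\la+k\alpha$, which gives the orbit equality
\begin{align*}
W(\la+k\alpha)=W(\la+k\beta).
\end{align*}
Taking the union over all $k\in\C$ on both sides yields the desired equality
\begin{align*}
\{W(\la+k\alpha)\mid k\in\C\}=\{W(\la+k\beta)\mid k\in\C\}.
\end{align*}

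There is essentially no obstacle in this step; the case-by-case work is already absorbed in Lemma \ref{lem::11}, and the only substantive point is that the element $\sigma$ produced there is simultaneously a stabilizer of $\la$ \emph{and} sends $\alpha$ to $\beta$, which is exactly what is needed to translate the family parametrized by $\alpha$ into the one parametrized by $\beta$.
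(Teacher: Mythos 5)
Your proof is correct and follows essentially the same route as the paper: apply Lemma \ref{lem::11} to obtain $\sigma\in W$ fixing $\la$ and sending $\alpha$ to $\beta$, then use linearity to get $W(\la+k\alpha)=W\sigma(\la+k\alpha)=W(\la+k\beta)$ for each $k$. (The only unstated point is the trivial case $\alpha=\beta$, where Lemma \ref{lem::11} is not needed.)
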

\begin{proof}
	By Lemma \ref{lem::11} we have  $$W(\la+k\alpha)=W\sigma(\la+k\alpha) = W(\la+\beta),$$ for any $k\in \C.$
\end{proof}

We define a relation $\sim$ on $\h^\ast$ by   $$\la \sim \mu, \text{ if }\mu =w(\la +k\alpha), \text{~ for some {$w\in W$,}} ~\alpha \in A(\la), ~k\in \C.$$  The following lemma shows that $\sim$ is an equivalence relation.

\begin{lem} \label{lem::eqiv22}
	The relation $\sim$ is an equivalence relation on $\h^\ast$.
\end{lem}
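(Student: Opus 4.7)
The plan is to verify reflexivity, symmetry, and transitivity in turn. Reflexivity is immediate by taking $w=e$ and $k=0$. For symmetry, suppose $\mu=w(\la+k\alpha)$ with $\alpha\in A(\la)$; setting $\gamma:=w\alpha\in\Phi_{\ob,\otimes}$, the $W$-invariance of the bilinear form gives
\[
(\mu,\gamma) = (w^{-1}\mu,\alpha) = (\la+k\alpha,\alpha) = 0,
\]
so either $\gamma$ or $-\gamma$ lies in $A(\mu)$. Absorbing the sign into the scalar, one obtains $\la=w^{-1}(\mu+k'\beta)$ with $\beta\in A(\mu)$ and $k'\in\C$, proving $\mu\sim\la$.

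Transitivity is the substantive step. Assume $\mu=w_1(\la+k_1\alpha_1)$ with $\alpha_1\in A(\la)$, and $\nu=w_2(\mu+k_2\alpha_2)$ with $\alpha_2\in A(\mu)$. Writing $k_2 w_1^{-1}\alpha_2=k\beta$ for some $\beta\in\Phi^+_{\ob,\otimes}$ and $k\in\C$ (with a sign flip if $w_1^{-1}\alpha_2\in-\Phi^+_{\ob,\otimes}$), one computes
\[
\nu = w_2w_1(\la+k_1\alpha_1+k\beta).
\]
Invariance of the form yields $(\la+k_1\alpha_1,\beta)=\pm(\mu,\alpha_2)=0$, while isotropy of $\alpha_1$ combined with $\alpha_1\in A(\la)$ gives $(\la+k_1\alpha_1,\alpha_1)=0$. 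Hence both $\alpha_1,\beta\in A(\la+k_1\alpha_1)$. If $\alpha_1=\beta$, the sum telescopes to $(k_1+k)\alpha_1$ and $\la\sim\nu$ is immediate.

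If $\alpha_1\neq\beta$, the key point is that although $\beta$ need not lie in $A(\la)$, Lemma \ref{lem::11} can be applied to the \emph{shifted} weight $\la+k_1\alpha_1$, where both $\alpha_1$ and $\beta$ do lie in $A(\la+k_1\alpha_1)$. This yields a reflection $\sigma=s_\eta\in W$ with $\sigma(\la+k_1\alpha_1)=\la+k_1\alpha_1$ and $\sigma\alpha_1=\beta$. Since $\sigma$ is an involution, $\sigma\beta=\alpha_1$, so
\[
\sigma(\la+k_1\alpha_1+k\beta) = (\la+k_1\alpha_1)+k\alpha_1 = \la+(k_1+k)\alpha_1,
\]
whence $\nu=w_2w_1\sigma(\la+(k_1+k)\alpha_1)$, establishing $\la\sim\nu$. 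The main obstacle lies precisely in this step: one must recognize that the two correction terms $k_1\alpha_1$ and $k\beta$ can be merged into a single one by invoking Lemma \ref{lem::11} at the shifted weight $\la+k_1\alpha_1$ rather than at $\la$ itself.
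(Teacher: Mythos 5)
Your proof is correct and follows essentially the same strategy as the paper's. The pivotal idea — merging the two correction terms by invoking Lemma~\ref{lem::11} — is identical; the paper applies the lemma at $\mu$ (where $w_1\alpha_1$ and $\alpha_2$ both lie in $A(\mu)$), while you apply it at the $W$-conjugate point $\la + k_1\alpha_1 = w_1^{-1}\mu$, which is the same argument transported by $w_1^{-1}$.
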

\begin{proof}
	Suppose that $\la \sim \mu$. Then $\mu =w(\la +k\alpha)$, for some $w\in W,~\alpha\in A(\la)$ and $k\in \C$ by definition. It follows that $\la =w^{-1}(\mu -kw\alpha)$ with $w\alpha \in A(\mu)$. Thus, $\mu \sim \la$.
	
   Suppose that $\la \sim \mu$ with $\mu =w(\la +k\alpha)$ as above, and in addition we suppose that $\mu \sim \nu$. By definition we have  $\nu =\sigma(\mu +\ell \beta)$, for some $\sigma \in W,~\ell\in \C$ and $\beta \in A(\mu)$. Since $w\alpha, \beta \in A(\mu)$, there exists $\tau \in W$ such that $\tau \mu =\mu$ and $\beta =\tau w \alpha$ by Lemma \ref{lem::11}. We have
   \begin{align*}
   &\nu =\sigma(\mu +\ell \beta) =\sigma(\tau \mu +\ell \tau w\alpha)= \sigma\tau(\mu+\ell w\alpha) = \sigma\tau(w(\la+k\alpha) +\ell w\alpha) =\sigma\tau w(\la+(k+\ell)\alpha),
   \end{align*} which implies that $\la \sim \nu$.
\end{proof}

Let $\chi_\nu:Z(\mathfrak{g})\rightarrow\mathbb{C}$ denote the central  character determined by $\nu \in \h^\ast$.
\begin{thm} \label{thm::3}
	Suppose that $\la,\mu \in \h^\ast$ are atypical weights. Then $\la \sim \mu$ if and only if $\chi_{\la -\rho} =\chi_{\mu-\rho}.$
\end{thm}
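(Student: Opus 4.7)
The plan is to derive the theorem directly from Sergeev's explicit description \cite{Serg} of central characters for the basic classical Lie superalgebra $G(3)$: the equality $\chi_{\la-\rho}=\chi_{\mu-\rho}$ holds if and only if there exists a chain $\la=\la_0,\la_1,\dots,\la_n=\mu$ in $\h^*$ such that, for each $0\le i<n$, either (a) $\la_{i+1}=w\la_i$ for some $w\in W$, or (b) $\la_{i+1}=\la_i+c\alpha$ for some isotropic root $\alpha\in\Phi_\ob$ and scalar $c\in\C$ with $(\la_i,\alpha)=0$. Since $\sim$ is already an equivalence relation by Lemma~\ref{lem::eqiv22}, the theorem will follow as soon as each Sergeev elementary move is realised as a single $\sim$-step and conversely.

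The forward direction is immediate: if $\la\sim\mu$ with $\mu=w(\la+k\alpha)$, $\alpha\in A(\la)$, $k\in\C$, then $\la\to\la+k\alpha$ is a type-(b) move (because $(\la,\alpha)=0$ and $\alpha$ is isotropic) and $\la+k\alpha\to w(\la+k\alpha)=\mu$ is a type-(a) move, so Sergeev's criterion yields $\chi_{\la-\rho}=\chi_{\mu-\rho}$.

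For the converse I will translate each move of a Sergeev chain into a $\sim$-step. A type-(b) move with $\alpha\in\Phi^+_{\ob,\otimes}$ is directly a $\sim$-step; if the isotropic root is $-\alpha$ with $\alpha\in\Phi^+_{\ob,\otimes}$, replace the coefficient $c$ by $-c$. The only substantive point is that a type-(a) move $\la_{i+1}=w\la_i$ requires a witness $\alpha'\in A(\la_i)$ in our definition of $\sim$, so I need to check that atypicality persists along the chain. If $\alpha\in A(\la_i)$, then $(\la_i+c\alpha,\alpha)=(\la_i,\alpha)+c(\alpha,\alpha)=0$ shows $\alpha\in A(\la_i+c\alpha)$; and $W$-invariance of the bilinear form together with the $\pm W$-stability of $\Phi^+_{\ob,\otimes}$ gives $\pm w\alpha\in A(w\la_i)$. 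Starting from the atypical weight $\la=\la_0$, this shows $A(\la_i)\ne\emptyset$ for every $i$, so each type-(a) move can be written as $\la_{i+1}=w(\la_i+0\cdot\alpha')$ with $\alpha'\in A(\la_i)$, which is a $\sim$-step. Concatenating all these $\sim$-steps via Lemma~\ref{lem::eqiv22} then yields $\la\sim\mu$.

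The delicate issue is this propagation of atypicality along the Sergeev chain, which ensures the Weyl-group moves can be written as $\sim$-steps; once it is settled, the theorem reduces to transitivity of $\sim$.
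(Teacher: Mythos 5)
The crux of your argument is the claim that Sergeev's description gives the full ``chain criterion'': that $\chi_{\la-\rho}=\chi_{\mu-\rho}$ if and only if $\la$ and $\mu$ are connected by a chain of Weyl moves and isotropic shifts. What the paper actually draws from Sergeev (citing (0.6.7) there) is only the easy implication -- if $\la$ and $\mu$ are so connected then their central characters agree, because the image of the Harish--Chandra map consists of functions constant along such chains. The converse does not follow formally from a description of the invariant ring (invariants need not separate the generalized orbits), and for non-integral weights this direction is precisely the new content of the theorem; the linkage references acknowledged in the introduction ([Se03, Lemma 2.1], [CM17, Lemma 12.3]) are stated for integral weights only. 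As written, your proof takes the converse as a black box, which amounts to assuming the statement you are trying to prove.

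The paper establishes the converse by a direct, $G(3)$-specific computation. Using the $W$-action together with atypicality, one normalizes $\la = [d~|~d,\ k-\frac{d}{2},\ -(k+\frac{d}{2})]$ in the symbol notation; the distinguished central element $\Omega$ of Section 2.3.5 then acts on $M_\la$ by the scalar $-4k^2$, so equality of central characters forces $\ell=\pm k$ for the analogous parameter $\ell$ of $\mu$, and in each of the two cases $\ell = k$ and $\ell = -k$ one writes down the required $w\in W$ and $\alpha\in A(\la)$ explicitly. You would need either to supply such an argument or to cite a reference that genuinely proves the full ``iff'' for arbitrary (non-integral) weights. Your atypicality-propagation bookkeeping -- checking that each chain step can be promoted to a $\sim$-step once atypicality is known to persist along the chain -- is correct, but it is subordinate to this missing converse.
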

\begin{proof}
It follow, e.g., from \cite[(0.6.7)]{Serg} that $\la\sim\mu$ implies that $\chi_{\la-\rho}=\chi_{\mu-\rho}$.

So it remains to show that $\chi_{\la -\rho} =\chi_{\mu -\rho}$ implies that $\la \sim \mu$.  Suppose that $\la$ is an atypical weight. We write $\la$ in the symbol notation $\la =[d|a,b,c]$ with  $a+b+c = 0$. Applying the action of elements in $W$ if necessary, we may assume that $a =d$ since $\la$ is atypical. Set $k=b+\frac{d}{2}$,  then we have
	$\la =[d|d,k-\frac{d}{2}, -(k+\frac{d}{2})]$.
	
	Recall the central element $\Omega$ constructed in Section \ref{subsubsect::Cas}. We compute the eigenvalue of the element $\Omega$ on $M_\la$ as follows: \begin{align*}
	&  3d^2 -2d^2-2(k-\frac{d}{2})^2 -2(k+\frac{d}{2})^2\\
	&=3d^2 -2d^2 -2k^2 +2kd -\frac{d^2}{2} -2k^2 -2kd -\frac{d^2}{2} \\
	&=-4k^2.
	\end{align*}
	Similarly, we can write $\mu$ in the symbol form $\mu =[e|e,\ell -\frac{e}{2}, -(\ell +\frac{e}{2})]$ and infer that $c_\mu =-4\ell^2$. Since $\chi_{\la -\rho} =\chi_{\mu-\rho}$, we may conclude that $\ell =\pm k$.
	
	If $\ell = k$ then we see that $\mu = \la +(e-d)(\delta +\vare_1)$. If $\ell =-k$ then it follows from $[0|0,k,-k] = s_{\vare_2 -\vare_3} [0|0,-k,k]$ that
	\begin{align*}
	&\la \sim [0|0,k,-k]\sim  [0|0,-k,k] \sim \mu.
	\end{align*} This proves that $\la \sim \mu$ by   transitivity of $\sim$.
\end{proof}

\begin{thm} \label{thm::blocks}
	Let $\la \in \h^\ast$ with $\alpha \in A(\la)$. Then we have \begin{align}
	&\emph{Irr}\mc O_\la =\{W_\la(\la+k\alpha)|~k\in \Z\}.
	\end{align}
\end{thm}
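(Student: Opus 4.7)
My plan is to prove the asserted equality by establishing both inclusions.

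For the inclusion $\supseteq$, I would first show that $L_{\la + k\alpha} \in \mc O_\la$ for every $k \in \Z$. Since $(\la, \alpha) = 0$, Proposition \ref{prop:flags}(3) gives $(T_\la : M_{\la - \alpha}) > 0$, so $L_{\la - \alpha}$ is a composition factor of $T_\la \in \mc O_\la$ and therefore lies in $\mc O_\la$. Because $\alpha$ is isotropic, $(\la - j\alpha, \alpha) = 0$ for every $j$, and iteration yields $L_{\la - k\alpha} \in \mc O_\la$ for all $k \geq 0$. Running the same argument starting from $\la + \alpha$ (which still has $\alpha \in A(\la + \alpha)$) gives $\mc O_{\la + \alpha} = \mc O_\la$, and iterating in this direction extends the conclusion to all $k \in \Z$. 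Next, for each $\nu = \la + k\alpha$ and each reflection generator $s_\gamma$ of $W_\la = W_\nu$, Proposition \ref{prop:flags}(1), applied either to $\nu$ or to $s_\gamma \nu$ according to the sign of $\langle \nu, \gamma^\vee \rangle$, yields $L_{s_\gamma \nu} \in \mc O_\nu = \mc O_\la$. Iterating over the generators produces the full $W_\la$-orbit, establishing $\supseteq$.

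For the inclusion $\subseteq$, let $L_\mu \in \mc O_\la$. Equality of central characters combined with Theorem \ref{thm::3} gives $\mu \sim \la$, so $\mu = v(\la + c\beta)$ for some $v \in W$, $c \in \C$, and $\beta \in A(\la)$. Inspection of the proof of Lemma \ref{lem::11} shows that its element $\sigma$ satisfying $\sigma \la = \la$ and $\sigma \alpha = \beta$ is a reflection along a root $\gamma \in \Phi_{\oa, \circ}^+$ orthogonal to $\la$; since $\langle \la, \gamma^\vee \rangle = 0 \in \Z$, this root is integral and hence $\sigma \in W_\la$. Replacing $v$ by $v\sigma$, I may assume $\beta = \alpha$, so $\mu = v(\la + c\alpha)$.

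The remaining and most delicate step, which is the main obstacle, is to show that $v$ and $c$ can be chosen with $v \in W_\la$ and $c \in \Z$. To this end, I would establish that the candidate set $\bigcup_{k \in \Z} W_\la(\la + k\alpha)$ is closed under the elementary linkage moves produced by the super Jantzen sum formula (Proposition \ref{prop::JSF}), namely the integral even reflections $\nu \to s_{\alpha'}\nu$ with $\alpha' \in \Phi_{[\nu]}$, and the isotropic shifts $\nu \to \nu - \ell \gamma'$ with $\gamma' \in A(\nu)$ and $\ell \geq 1$. Closure under reflections is immediate, as $W_\nu = W_\la$ acts on the set. Closure under isotropic shifts requires showing that for $\nu = w(\la + k\alpha)$ in the candidate set, any $\gamma' \in A(\nu)$ is a $W_\la$-translate of $\alpha$ (so that $\nu - \ell \gamma'$ remains in the set), which is handled by applying Lemma \ref{lem::11} to $\gamma'$ and $w\alpha$ together with careful parity bookkeeping in the decomposition $W = \langle s_0 \rangle \times W_{G_2}$ via Proposition \ref{prop::IntWeylGp}. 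Once this closure claim is established, since $\la$ lies in the candidate set, the entire block $\mc O_\la$ does too, completing the proof.
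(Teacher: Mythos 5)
Your argument for $\supseteq$ is essentially the paper's (which simply points to Proposition~\ref{prop::JSF}), expanded into the explicit iterated application of Proposition~\ref{prop:flags}(3) and (1); that is fine.

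For $\subseteq$, you propose a genuinely different route. The paper, after invoking Theorem~\ref{thm::3} to write $\mu = w(\la+\ell\alpha)$, does a concrete case split on $w\in W_{G_2}$ versus $w\notin W_{G_2}$, explicitly massaging coordinates to show $\ell\in\Z$ and $w$ can be replaced by an element of $W_\la$ (using Proposition~\ref{prop::IntWeylGp}). You instead propose an abstract closure argument: show $S := \bigcup_{k\in\Z} W_\la(\la+k\alpha)$ is stable under the elementary linkage moves from the Jantzen sum formula, so that $\la\in S$ forces $\mathrm{Irr}\,\mc O_\la\subseteq S$. This is sound and arguably cleaner than the paper's coordinate computation, but you misjudge where the difficulty lies. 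You flag the isotropic-shift closure as requiring ``careful parity bookkeeping'' and an appeal to Proposition~\ref{prop::IntWeylGp}; in fact neither is needed. Given $\nu = w(\la+k\alpha)\in S$ with $w\in W_\la$ and $\gamma'\in A(\nu)$, note $w\alpha\in A(\nu)$ as well; then Lemma~\ref{lem::11} always produces a reflection $\sigma=s_\tau$ with $\tau\in\Phi_{\oa,\circ}^+$, $\sigma\nu=\nu$, $\sigma(w\alpha)=\gamma'$, and since $\langle\nu,\tau^\vee\rangle=0\in\Z$ one has $\sigma\in W_\nu$ for free. Combined with $W_\nu = W_\la$ for all $\nu\in S$ (which you should state and which follows since $\nu-\la$ is a $\Z$-combination of roots with integral pairing against all even coroots), this gives $\nu-\gamma' = \sigma w(\la+(k-1)\alpha)\in S$. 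The even-reflection moves from the JSF belong to $W_\la$ by definition of integral Weyl group, so there is no parity issue there either. Finally, once the closure argument is in place, your earlier detour through Theorem~\ref{thm::3} and the reduction to $\mu = v(\la+c\alpha)$ is redundant; the closure argument alone, together with the standard fact that $\mc O$-blocks are generated by Verma-module linkage, closes $\subseteq$.
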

\begin{proof} We first show that  $\text{Irr}\mc O_\la \subseteq \{W_\la(\la+k\alpha)|~k\in \Z\}.$ To see this, suppose that $\la$ and $\mu$ are two weights such that $\chi_{\la -\rho} =\chi_{\mu-\rho}$ and $\la -\mu \in \Z\Phi$. By Theorem \ref{thm::3} there exist $w\in W$, $\ell \in \C$ and $\alpha  =\delta\pm \vare_i \in A(\la)$ such that $\mu =w(\la+ \ell \alpha)$.

Suppose that $w\in W_{G_2}$. Then $\ell = \langle w\la -\mu , (2\delta)^\vee\rangle =\langle \la -\mu , (2\delta)^\vee\rangle\in \Z$. Also, we have $$\la -w\la =\la -\mu +\ell w\alpha  \in \Z\Phi.$$ By Proposition \ref{prop::IntWeylGp}, it follows that $w\in W_\la$. Therefore $\mu \in W_\la(\la +\Z\alpha)$.

Suppose that $w\notin W_{G_2}$. Then   $w=s_{2\delta}w'$, for some $w'\in W_{G_2}$. First, we assume that $\alpha =\delta+\vare_1$. There exist  $x,d\in \C$ such that $\la =[d|~d,x,-x-d]$ and \begin{align*}
&\mu  = [-d-\ell|~w'(d+\ell,x-{\ell}/{2},-x-d-{\ell}/{2})].
\end{align*}
Define $k:=2d+\ell $.  Note that $\la -\mu \in \Z\Phi$ implies that $k\in \Z$. We have
\begin{align*}
&\mu =[d-k|~w'(k-d, x+d-{k}/{2},-{k}/{2} -x)] \\
&=[d|~w'(-d,x+d,-x)] -k[1|~w'(-1,{1}/{2}, {1}/{2})] \\
&=w's_2(\la -k\alpha),
\end{align*} which reduces to the previous case. Therefore   $\mu \in W_\la(\la +\Z\alpha)$.  The case when  $\alpha =\delta -\vare_1$ can by proved by similar arguments. If follows that we have $\mu =w's_2(\la -k\alpha)$.

The remaining cases can be completed by similar arguments. In the case when $\alpha =\delta \pm \vare_2$ we have $$\mu = w's_1s_2s_1(\la -k\alpha),$$ for some $k\in \Z$. In the case when $\alpha =\delta\pm \vare_3$ we have $$\mu =w's_1(s_2s_1)^3(\la -k\alpha),$$
for some $k\in \Z$. This proves that $\text{Irr}\mc O_\la \subseteq \{W_\la(\la+k\alpha)|~k\in \Z\}.$

Finally, it follows  from  Proposition \ref{prop::JSF} that  $\text{Irr}\mc O_\la \supseteq \{W_\la(\la+k\alpha)|~k\in \Z\}$.
\end{proof}

\subsection{Description of non-integral blocks} \label{subsect::classification}

\subsubsection{} Let $\la\in \h^\ast$ be an atypical non-integral weight. We recall the subgroup $W_\la^1$ of $W_\la$ from \eqref{eq::210}. We describe $W_\la^1$ in the following theorem.
\begin{thm} \label{thm::desnonblocks}
	 Let $\la \in \h^\ast$ be an atypical and non-integral weight. Then the subgroup $W_\la^1$ is isomorphic to one of the following subgroups $$ \{e\}, \Z_2, \Z_2\times \Z_2, S_3, D_{12},$$
where $S_3$ is the symmetric group in $3$ letters and $D_{12}$ is the dihedral group of order $12$.
 Furthermore, we have
	\begin{itemize}
		\item[(1)] $W_\la^1= \{e\}$ $\Leftrightarrow$ $W_\la = \{e\}$.
		\item[(2)] $W_\la^1\cong \Z_2$ $\Leftrightarrow$ $W_\la^1 = \langle s_{\vare_2 -\vare_1} \rangle,~\langle s_{\vare_1} \rangle,~\langle  s_{\vare_1 -\vare_3}\rangle,~ \langle s_{-\vare_3}\rangle,~\langle s_{\vare_2 -\vare_3}\rangle$ or $\langle s_{\vare_2}\rangle$.
		\item[(3)] $W_\la^1\cong \Z_2\times \Z_2$ $\Leftrightarrow$ $W_\la^1 = \langle s_{-\vare_3}, s_{\vare_2-\vare_1}\rangle, ~\langle s_{\vare_1}, s_{\vare_2-\vare_3}\rangle$ or $\langle s_{\vare_2}, s_{\vare_1 -\vare_3} \rangle$.
		\item[(4)] $W_\la^1\cong S_3$ $\Leftrightarrow$ $W_\la^1 = \langle s_{\vare_1}, s_{\vare_2}, s_{-\vare_3}\rangle$.
			\item[(5)] $W_\la^1= W_{G_2}\cong D_{12}\Leftrightarrow W_\la = W_{G_2}.$
	\end{itemize}

\end{thm}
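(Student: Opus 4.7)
The plan is to observe that $W_\la^1$, being generated by the reflections $s_\gamma$ for those $\gamma\in\Phi_{\oa,\circ}$ with $\langle\la,\gamma^\vee\rangle\in\Z$, is the Weyl group of a sub-root-system of $\Phi_{\oa,\circ}$. The strategy is then to enumerate the sub-root-systems of $G_2$, rule out those that cannot arise from a non-integral $\la$, address the additional constraints linking $W_\la^1$ to $W_\la$ in parts (1) and (5), and finally exhibit explicit realizations for each remaining case.

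Using that no two short roots and no two long roots of $G_2$ are mutually perpendicular (while each short root has a unique perpendicular long root), a direct inspection shows that the sub-root-systems of $\Phi_{\oa,\circ}$ are: the empty set; the six rank-one systems $\{\pm\gamma\}$, one per positive root; the three systems of type $A_1\times A_1$ coming from the perpendicular pairs
\[\{\vare_1,\vare_2-\vare_3\},\quad \{\vare_2,\vare_1-\vare_3\},\quad \{-\vare_3,\vare_2-\vare_1\};\]
the short $A_2$ subsystem $\{\pm\vare_1,\pm\vare_2,\pm\vare_3\}$; the long $A_2$ subsystem $\{\pm(\vare_i-\vare_j)\}$; and the full $\Phi_{\oa,\circ}$. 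Writing $\la=d\delta+a\omega_1+b\omega_2$, formula \eqref{eq::Weylgpcoroot} identifies the six $G_2$ coroot pairings as $b,\,3a+b,\,3a+2b$ (short roots) and $a,\,a+b,\,2a+b$ (long roots). If all three long pairings lie in $\Z$, then $b=(a+b)-a\in\Z$, so the three short pairings are integer as well, forcing $W_\la^1=W_{G_2}$. Hence the long $A_2$ subsystem never occurs on its own, and the only possible isomorphism types for $W_\la^1$ are $\{e\}$, $\Z_2$, $\Z_2\times\Z_2$, $S_3$ (short roots), and $D_{12}$, with exactly the specific subgroups listed in (2)--(4).

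For (1), if $W_\la^0\neq\{e\}$ then $s_0\in W_\la$ forces $d\in\tfrac12+\Z$; but atypicality of $\la$ yields some symbol entry $x_i$ of $\la=[d|x_1,x_2,x_3]$ with $x_i=\pm d$, so $x_i\in\tfrac12+\Z$ and $\langle\la,\vare_i^\vee\rangle=2x_i\in\Z$, placing $s_{\vare_i}\in W_\la^1$ and contradicting $W_\la^1=\{e\}$. For (5), $W_\la^1=W_{G_2}$ means $a,b\in\Z$, and non-integrality of $\la$ then forces $d\notin\tfrac12+\Z$, i.e.\ $W_\la^0=\{e\}$. The converses in (1) and (5) are immediate. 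Finally, each listed subgroup is realized by an explicit atypical non-integral $\la$ obtained by prescribing fractional parts of $(a,b)$ so that precisely the desired coroot pairings become integer (e.g.\ $(a,b)=(1/4,0)$ for $\langle s_{\vare_1}\rangle$, $(1/3,0)$ for the short-$A_2$ $S_3$, and suitable half-integer combinations for each $\Z_2\times\Z_2$), then setting $d$ equal to one of the symbol entries $x_i$ (to force atypicality) with $d\notin\tfrac12+\Z$ (to preserve non-integrality). The main obstacle in this last step is to verify that no extra coroot pairing accidentally becomes integer for a given choice, which amounts to routine inequalities on fractional parts modulo $\Z$.
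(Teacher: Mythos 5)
Your proof is correct and reaches exactly the same list of subgroups via the same underlying facts, but the organization of the enumeration step is genuinely cleaner than the paper's. You start from the observation that $W_\la^1$ is the reflection group of the sub-root-system $\Phi_{[\la]}\cap\Phi_{\oa,\circ}$, so you only ever enumerate sub-root-systems of $G_2$ (three rank-one types up to sign, three mixed $A_1\times A_1$ pairs from the perpendicularity structure, the two $A_2$'s, and the full system). The paper instead lists all $16$ subgroups of $D_{12}$ and then discards the non-reflection ones by hand — most notably $\langle\sigma^3\rangle = \langle s_{\vare_1}s_{\vare_2-\vare_3}\rangle$, which your framing never produces. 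The elimination of the long $A_2$ case is the same idea in both: the linear dependence $b=(a+b)-a$ among the coroot pairings in \eqref{eq::Weylgpcoroot} shows that integrality of all long pairings forces integrality of all short ones. Your argument for part (1) is a direct contrapositive of the paper's: whereas the paper argues that $\langle\la,\vare_i^\vee\rangle\notin\Z$ forces $\langle\la,(2\delta)^\vee\rangle\notin\tfrac12\Z$, you argue that $d\in\tfrac12+\Z$ together with atypicality (i.e.\ some symbol entry $x_i=\pm d$) forces $2x_i\in\Z$; these are the same computation read in opposite directions. The final realizability check, which both you and the paper leave at the level of "exhibit suitable fractional parts," is the only step you have not fully carried out, but it is indeed routine and the paper is no more explicit. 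In short, same approach, somewhat more streamlined bookkeeping.
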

\begin{proof}
 We first note that the part $(5)$ follows from the definition that $\la$ is non-integral.

 We next show the part $(1)$. To see this, we suppose on the contrary  that $W_\la^1=\{e\}$ and $W_\la =\langle s_{2\delta} \rangle$.  We first assume that $(\la ,\delta\pm\vare_1 ) =0$. Then we have $\frac{1}{2}\langle \la ,\vare_1^\vee \rangle  = \pm \langle\la,(2\delta)^\vee \rangle$ by \eqref{eq::Weylgpcoroot}, which implies that  $\langle\la,(2\delta)^\vee \rangle  \notin \frac{\Z}{2}$, a contradiction. Next, we observe that $$(s_2\la ,\delta\mp\vare_3)=0\Leftrightarrow(\la ,\delta\pm\vare_2 ) =0\Leftrightarrow (s_1\la ,\delta\pm\vare_1)=0 ,~\text{ and }\langle s_i\la,(2\delta)^\vee \rangle  = \langle\la,(2\delta)^\vee \rangle,$$
 for $i=1,2.$  Therefore other cases reduce to the former case. This proves the part $(1).$

 \begin{comment} We set $$ \la \equiv \left[d~ \middle|~   \frac{b}{2}, \frac{3a+b}{2}, -\frac{3a+2b}{2}\right], $$ as in \eqref{eq::wtid}, and we have $d\in \frac{1}{2}+\Z$. By \eqref{eq::Weylgpcoroot}, it follows that  \begin{itemize}
 	\item[(a).] $\langle \la,\vare_1^\vee\rangle \not\in \Z \Rightarrow (\la, \delta\pm \vare_1)\not = 0.$
 	\item[(b).]$\langle \la,\vare_2^\vee\rangle \not\in \Z \Rightarrow (\la, \delta\pm \vare_2)\not = 0.$
 	\item[(c).]$\langle \la,-\vare_3^\vee\rangle \not\in \Z \Rightarrow (\la, \delta\pm \vare_3)\not = 0.$
 \end{itemize} This means that $\la$ is typical, a contradiction.
 \end{comment}

  Now, we suppose that $W_\la^1\not \cong \{e\}, D_{12}$. Since $W_\la^1$ is isomorphic to the Weyl group of a semisimple Lie algebra of smaller rank (see, e.g., \cite[Theorem 3.4]{Hu08}), it follows that  $W_\la^1$  is isomorphic to  one of the following groups
	\begin{align} \label{Eq::subgpofD12}
	&\Z_2, ~\Z_2\times \Z_2,~S_3,
	\end{align} which corresponds to Lie algebras $A_1$, $A_1\times A_1$ and $A_2$.

	 Set $\sigma: =s_2s_1$ and $\tau:=s_1$. By a direct computation, there exist $16$ subgroups of the dihedral group $W_{G_2}= \langle s_1, s_2\rangle\cong D_{12}$. The following is a list of those subgroups which are isomorphic to groups in \eqref{Eq::subgpofD12}:
 \begin{align}
 &\langle \sigma^3 \rangle,~\langle \sigma^i \tau\rangle\cong \Z_2,\\
 &\langle\sigma^3, \tau\rangle, ~\langle\sigma^3, \sigma \tau\rangle,~\langle\sigma^3, \sigma^2\tau\rangle\cong \Z_2\times \Z_2,\\
 &\langle \sigma^2, \tau\rangle,~\langle\sigma^2, \sigma\tau\rangle\cong S_3,
 \end{align}
 where  $0\leq i\leq 5$. %They are isomorphic to  $\Z_2$, $\Z_2\times \Z_2$ and $S_3$, respectively.
 We will analyze them case by case.

 We first consider the case $W_\la^1\cong \Z_2$. Observe that $\sigma^3 = s_{\vare_1} s_{\vare_2 -\vare_3}$, which does not generate a integral Weyl group. Therefore we exclude this case. We observe that
 \begin{align} \label{Eq::Z2Gen}
 &\tau = s_1 = s_{\vare_2 -\vare_1},~\sigma \tau = s_2 = s_{\vare_1},~\sigma^2 \tau = s_{\vare_1 -\vare_3},~\sigma^3 \tau = s_{-\vare_3}, ~\sigma^4\tau = s_{\vare_2 -\vare_3},~\sigma^5\tau =s_{\vare_2}.
 \end{align} It can be checked that each element in \eqref{Eq::Z2Gen} generates an integral Weyl group of a certain weight.

 We now consider the case $W_\la^1\cong \Z_2 \times \Z_2$. Observe that \begin{align}
 &\langle\sigma^3, \tau\rangle = \langle\sigma^3\tau, \tau\rangle = \langle s_{-\vare_3}, s_{\vare_2-\vare_1}\rangle,\label{eq::38}  \\
 &\langle\sigma^3, \sigma\tau\rangle = \langle\sigma^4, \sigma\tau\rangle = \langle s_{\vare_2 -\vare_3}, s_{\vare_1}\rangle, \label{eq::39} \\
 &\langle\sigma^3, \sigma^2\tau\rangle = \langle\sigma^5\tau, \sigma^2\tau\rangle = \langle s_{\vare_2}, s_{\vare_1-\vare_3}\rangle, \label{eq::310}
  \end{align}
  Similarly, it can be checked by illustrating examples that each of groups above is an integral Weyl group of a certain weight.

  Finally, we consider $W_\la^1\cong S_3$. Suppose that $W_\la^1 =\langle \sigma^2, \tau \rangle$. We note that
  \begin{align*}
  &s_{\vare_2 -\vare_1} = \tau,~s_{\vare_1 -\vare_3} = \sigma^2 \tau,~s_{\vare_2 -\vare_3} =\tau \sigma^2 \in W_\la^1.
  \end{align*} From equations \eqref{eq::Weylgpcoroot} we find that
  \begin{align*}
  &\langle \la,\vare_1^\vee \rangle = \langle \la,(\vare_1-\vare_3)^\vee \rangle -\langle \la,(\vare_2-\vare_1)^\vee \rangle \in \Z,\\
  &\langle \la,\vare_2^\vee \rangle = \langle \la,(\vare_2-\vare_1)^\vee \rangle +\langle \la,(\vare_2-\vare_3)^\vee \rangle \in \Z,\\
  &\langle \la,(-\vare_3)^\vee \rangle = \langle \la,(\vare_1-\vare_3)^\vee \rangle +\langle \la,(\vare_2-\vare_3)^\vee \rangle \in \Z,
  \end{align*} which show that $W_\la^1 = W_{G_2}$, a contradiction.

 Now suppose that  $W_\la^1= \langle \sigma^2, \sigma\tau \rangle.$ We have $\sigma \tau = s_{\vare_1},~\sigma^3\tau = s_{-\vare_3}$, $\sigma\tau \sigma^2 = s_{\vare_2}$ and $W_\la^1 \cong S_3$.
\end{proof}

\subsubsection{}For  $H= \{e\}, ~\Z_2, ~V:=\Z_2\times \Z_2,~S_3,~W_{G_2},$ we define $\mc O_{H}$ to be the full subcategory of $\mc O$ consisting modules of weights $\la$ such that $W_\la^1\cong H.$ If $\mc O^{\mathrm{int}}$ denotes the full subcategory of modules of integral weights, then we have the following decomposition
 \begin{align}
 &\mc O = \mc O^{\mathrm{int}} \oplus \mc O_{\{e\}}\oplus \mc O_{\Z_2}\oplus \mc O_{V}\oplus \mc O_{S_3}\oplus \mc O_{W_{G_2}}.
 \end{align}

 In the remaining sections, we shall give description of blocks in $\mc O_{\{e\}}$, $\mc O_{\Z_2}$, $\mc O_{V}$, $\mc O_{S_3}$ and $\mc O_{W_{G_2}}$ and provide the complete list of tilting characters in these blocks.

 %%%%%%%%%%%%%%%%%%%%%%%%%%%%%%%%%%%%%%%%%%%%%%%%%%%%%%%%%%%%%%%%%%%%%%%%%%%%%%

\section{Typical and generic blocks} \label{Sect::4}

\subsection{Typical blocks}

We recall that a weight $\la\in \h^\ast$ called {\em strongly typical} if $\la$ is typical and $\langle \la , (2\delta)^\vee\rangle \neq 0$.  By \cite{Gor02b} a strongly typical block $\mc O_\la$ is equivalent to a block of the BGG category of the underlying Lie algebra $\mf{g}_{\bar 0}$.

Now, suppose that $\la$ is strongly typical and $\langle\la,(2\delta)^\vee\rangle\in\Z$. It follows that the character of the tilting module $\ch T_\la$ is given by the Kazhdan-Lusztig polynomial associated to the Hecke algebra of the dihedral group $D_{12}$, namely, we have
\begin{align} \label{eq::sttilting}
&\ch T_\la = \sum_{\mu \in W_\la^1 \la,~\mu\leq_{G_2}\la }\ch M_\mu,
\end{align}
where  $\leq_{G_2}$ is the Bruhat ordering on $\C\vare_1\oplus \C\vare_2$ when regarded as weight space of $G_2$.

The following theorem shows that every typical block of $G(3)$ is equivalent to a block of the underlying Lie algebra.

\begin{thm} \label{thm::tysty} Suppose that $\la\in \h^\ast$ is a typical (not necessarily  strongly typical) weight.  Then $\mc O_\la\cong \mc O_{\la -m\delta}$ for some strongly typical weight $\la -m\delta\in \h^\ast.$
%	$$\ch T_\la = \sum_{\mu \in W_\la^1 \la,~\mu\leq_{G_2}\la }\ch M_\mu.$$
 %	where the sum takes over all weights $\mu$ which satisfies $\mu \leq_{G_2} \la$ and $\langle \mu ,(2\delta)^\vee \rangle = 0. $
\end{thm}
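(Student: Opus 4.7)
The plan is to realize the equivalence through a translation functor that shifts the $\delta$-component by $-m\delta$ for a suitably chosen integer $m$.

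First I would select a positive integer $m$ such that $\la - m\delta$ is strongly typical. Since $(\la - m\delta, \delta \pm \vare_i) = (\la, \delta \pm \vare_i) + 2m \neq 0$ for $i=1,2,3$ excludes only finitely many $m$, almost every large $m$ preserves typicality. Because $\la$ is not strongly typical, $\langle \la, (2\delta)^\vee\rangle = 0$, whence $\langle \la - m\delta, (2\delta)^\vee\rangle = -m \neq 0$, upgrading typicality to strong typicality. Choosing $m \in \Z$ (not a half-integer) ensures $-m \notin \frac{1}{2} + \Z$, so $s_0 \notin W_{\la - m\delta}$; moreover, since every root $\gamma$ of $G_2$ is orthogonal to $\delta$, we have $\langle \la - m\delta, \gamma^\vee\rangle = \langle \la, \gamma^\vee\rangle$, and thus $W_{\la - m\delta} = W_\la$ as subgroups of $W_{G_2}$. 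Under these conditions, $\mathrm{Irr}\mc O_\la = W_\la \cdot \la$ (the typical analog of Theorem \ref{thm::blocks}, where $A(\la) = \emptyset$ collapses the $k\alpha$-shifts) and $\mathrm{Irr}\mc O_{\la - m\delta} = W_\la \cdot (\la - m\delta)$, and the two sets are in bijection via $\mu \mapsto \mu - m\delta$ because $W_\la$ fixes $\delta$.

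Next I would realize this bijection via a translation functor. Let $V$ be a finite-dimensional $\mf g$-module containing $-m\delta$ as a weight of multiplicity one; for instance $V = L(m\delta)^*$, where $L(m\delta)$ is the finite-dimensional simple $\mf g$-module of highest weight $m\delta$ (well defined and typical for $m \geq 1$, since $m\delta$ is dominant for $\mf g_{\oa}$ and typical by the computation of the previous paragraph). Set
\[
F := \mathrm{pr}_{\la - m\delta} \circ (V \otimes -) : \mc O_\la \longrightarrow \mc O_{\la - m\delta}, \qquad G := \mathrm{pr}_\la \circ (V^* \otimes -) : \mc O_{\la - m\delta} \longrightarrow \mc O_\la.
\]
The crux is to show that $F$ sends $M_\mu$ to $M_{\mu - m\delta}$ for each $\mu \in W_\la \cdot \la$. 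This rests on the standard Verma-flag computation for $V \otimes M_\mu$, whose subquotients are $M_{\mu + \nu}$ as $\nu$ ranges over weights of $V$ with multiplicity, followed by the projection that retains only those $\nu$ with $\mu + \nu \in \mathrm{Irr}\mc O_{\la - m\delta}$. Once $F$ and $G$ are shown to send Vermas to Vermas bijectively in this manner, the highest weight structure of both blocks (Section \ref{subsect::24}) together with the standard adjunction between the two translation functors forces them to be mutually quasi-inverse equivalences.

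The main obstacle is the weight-by-weight verification above: I need to show that the only weights $\nu$ of $V$ with $\mu + \nu \in W_\la \cdot (\la - m\delta)$ are $\nu = w\mu - \mu - m\delta$ for $w \in W_\la$, each occurring with the correct multiplicity in $V$. Typicality of $\la - m\delta$ eliminates odd linkages via $A(\la - m\delta) = \emptyset$; strong typicality rules out any $s_0$-linkage; and since $W_\la \subseteq W_{G_2}$ acts trivially on $\delta$, the target orbit $W_\la \cdot (\la - m\delta)$ sits in a single affine translate of $\C\vare_1 + \C\vare_2$. A sufficiently generic choice of $m$, one that avoids accidental overlaps between the weight spectrum of $V$ (computable via the super Weyl character formula for the typical module $L(m\delta)$) and the images $w\la - \la - m\delta$, then gives the required cancellation and yields the desired equivalence of blocks.
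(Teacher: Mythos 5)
Your proposal follows essentially the same strategy as the paper: choose a suitable $m$ so that $\la-m\delta$ is strongly typical and realize the equivalence via the translation functor attached to the $31$-dimensional adjoint-type module $L(m\delta)$ (or its dual). The observation that $F$ sends Vermas to Vermas, by way of the unique weight of $L(m\delta)^*$ with $\delta$-level $-m$, is exactly the mechanism used in the paper's proof. So the route is the same; the differences are in execution, and two places are underdeveloped.

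First, your appeal to ``a sufficiently generic choice of $m$'' to avoid ``accidental overlaps'' mislocates the issue. Genericity of $m$ is needed only to keep $\la-m\delta$ typical (concretely, $|b|,|3a+b|,|3a+2b|\neq 2m$, which the paper ensures by taking $2m$ larger than all these quantities). The assertion that the only weight of $V=L(m\delta)^*$ lying over any $w'\la-\mu-m\delta$ with $w'\in W_\la$ is $\nu=-m\delta$, with multiplicity one, is not a genericity phenomenon at all: it is forced by the fact that $-m\delta$ is the unique weight of $V$ with $\langle\cdot,(2\delta)^\vee\rangle=-m$ (the $\delta$-extremal piece of $L(m\delta)$ is the one-dimensional highest weight space, since $m\delta$ has trivial $G_2$-part). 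This structural fact holds for every $m\ge 2$, so no appeal to the super Weyl character formula or to overlap avoidance is required, and you should state it explicitly rather than hide it behind genericity.

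Second, your concluding sentence that ``the highest weight structure of both blocks together with the standard adjunction between the two translation functors forces them to be mutually quasi-inverse equivalences'' is too quick. Biadjoint exact functors that carry standard objects bijectively onto standard objects need not be mutually inverse without a further check; one must verify that the unit and counit of the adjunction are isomorphisms, typically by testing on simple objects and bootstrapping. This is precisely what the paper's Lemma \ref{lem::42} supplies, and that lemma in turn uses the input that $[T_x\colon\Delta_y]$ and $[\Delta_x\colon S_y]$ agree in the two blocks, which the paper establishes by first proving that \eqref{eq::sttilting} persists for $\la$ typical but not strongly typical (via Proposition \ref{prop:flags} applied to $FT_{\la-m\delta}$). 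Your outline skips both this intermediate character computation and the descent argument. To make your proposal complete, either invoke a lemma of the type of Lemma \ref{lem::42} and verify its hypotheses, or directly check that the unit and counit of $(F,G)$ are isomorphisms on simples in these blocks.
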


Before giving a proof of Theorem \ref{thm::tysty}, we shall first make a few useful observations below.

Let $(\A,\{I, \leq\})$ and $(\A',\{I, \leq\})$ be two highest weight categories (cf.~\cite{CPS88}) with simple objects $S_i, S'_i$ indexed by the same finite set $i\in I$. Then they are equivalent to some module categories of quasi-hereditary algebras. We denote by  $\Delta_i$, $T_i$ and $\Delta_i'$, $T_i'$ the standard and tilting objects in $\A$ and $\A'$ for $i\in I$, respectively. It follows from the definition and \cite[Proposition 2]{Ri91} that
\begin{align*}
&[T_i:\Delta_i] =1,~[T_i:\Delta_j] =0, \text{ unless $j\leq i$},\\
&[\Delta_i:S_i] =1,~[\Delta_i:S_j] =0, \text{ unless $j\leq i$}.
\end{align*}
The same formulas as above hold with $T_i'$, $\Delta'_i$, $S'_i$ replacing $T_i$, $\Delta_i$, and $S_i$, respectively.
With the notations above, we have the following useful lemma.
\begin{lem} \label{lem::42}
	Suppose that $\A, \A'$ admit simple-preserving dualities. Let $K: \mc \A \rightarrow \A'$ be an exact functor with an exact left adjoint $G:\A'\rightarrow \A$ satisfy the following two conditions:
\begin{enumerate}
	%\item[(1)] Both $\A$ and $\A'$ admits simple-preserving dualities.
	\item[(1)] $K$ and $G$ send the $\Delta_is$ to objects which have $\Delta$-filtrations.
	\item[(2)]    $KT_i = T_i'$, for any $i\in I$.
	\item[(3)] $[\Delta_x:S_y] = [\Delta'_x:S'_y],~[T_x:\Delta_y] = [T'_x:\Delta'_y],$ for any $x,y\in I.$
\end{enumerate}

Then $K$ and $G$ are mutually inverse equivalences of the highest weight categories $\A$ and $\A'$.% such that $KS_i =S_i'$, for any $i\in I$.
\end{lem}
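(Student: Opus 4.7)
The plan is to match the structural objects of the two categories under $K$ and then exploit the adjunction together with a faithfulness argument to conclude that both the unit and counit of $G\dashv K$ are natural isomorphisms. First I would prove by upward induction on $i\in I$ that $K\Delta_i\cong \Delta'_i$. For $i$ minimal, $T_i\cong \Delta_i$ since the only possible factor in its $\Delta$-filtration is $\Delta_i$ itself, so condition~(2) gives $K\Delta_i=KT_i=T'_i=\Delta'_i$. For the inductive step, applying the exact functor $K$ to a $\Delta$-filtration of $T_i$ produces a filtration of $KT_i=T'_i$ whose successive subquotients are the $K\Delta_j$'s; each of these has a $\Delta'$-filtration by condition~(1), and by induction $K\Delta_j\cong \Delta'_j$ for $j<i$. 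Comparing the resulting $\Delta'$-multiplicities with the intrinsic $\Delta'$-filtration of $T'_i$ and invoking the equality $(T_i:\Delta_j)=(T'_i:\Delta'_j)$ from condition~(3) forces $K\Delta_i\cong \Delta'_i$.

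Next, a parallel induction along the short exact sequence $0\to\mrm{rad}(\Delta_i)\to\Delta_i\to L_i\to 0$, combined with condition~(3), shows that $KL_i$ has length one; since $KL_i$ is a nonzero quotient of $\Delta'_i$, it must coincide with the simple top $L'_i$. Because $G$ is an exact left adjoint of the exact functor $K$, $G$ preserves projectives, and the adjunction together with $KL_j\cong L'_j$ gives
\[
\Hom_{\A}(GP'_i,L_j)=\Hom_{\A'}(P'_i,KL_j)=\Hom_{\A'}(P'_i,L'_j)=\delta_{ij}\,\mbb C,
\]
so $GP'_i$ is projective with unique simple quotient $L_i$, forcing $GP'_i\cong P_i$. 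An analogous Hom computation gives $GL'_i\neq 0$, and together with exactness this promotes both $K$ and $G$ to faithful functors (any nonzero morphism has a nonzero image, which contains a simple composition factor sent to a nonzero simple).

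Finally I would show that the unit $\eta:1_{\A'}\to KG$ is an isomorphism. Faithfulness of $G$ makes each $\eta_{X'}$ a monomorphism, because $\eta_{X'}\circ f=0$ translates under the adjunction into $Gf=0$. For $X'=P'_i$, applying $K$ to a $\Delta$-filtration of $P_i\cong GP'_i$ and using $K\Delta_j\cong\Delta'_j$ shows that $KGP'_i$ admits a $\Delta'$-filtration with multiplicities $(P_i:\Delta_j)$; BGG reciprocity together with the simple-preserving dualities rewrites this as $[\Delta_j:L_i]$, which by condition~(3) equals $(P'_i:\Delta'_j)$. Hence $\eta_{P'_i}$ is a mono between objects of the same composition length and so is an isomorphism; a projective presentation $P'_1\to P'_0\to X'\to 0$ plus exactness of $KG$ extends this to $\eta_{X'}$ for every $X'$, so $K$ is fully faithful. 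The triangle identity $\epsilon_{GX'}\circ G\eta_{X'}=\mrm{id}_{GX'}$ then makes $\epsilon_{GX'}$ an isomorphism; since every projective of $\A$ has the form $GP'_i$, the counit is an isomorphism on projectives, and the analogous 5-lemma argument extends this to all of $\A$. Combining mutual inverseness with the identifications from the first two paragraphs yields the desired equivalence of highest weight categories. The main obstacle I anticipate is the precise combinatorial comparison of $\Delta'$-multiplicities in $KT_i$ and $KP_i$ that drives both inductions; once this bookkeeping is in place, the remainder is essentially formal adjunction manipulation.
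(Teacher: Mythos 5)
Your proof is correct and follows essentially the same structural plan as the paper's: induct upward to show $K$ preserves standards and then simples; use that $G$, being an exact left adjoint of an exact functor, preserves projectives together with the identification $KS_j=S'_j$ to get $GP'_i\cong P_i$; and finally establish that the unit and counit of the adjunction $(G,K)$ are isomorphisms and invoke the five lemma. The one place you genuinely diverge is the last step: the paper simply notes that $\eta_{S'_i}$ and $\vare_{S_i}$ are nonzero maps between simple objects (using $KGS'_i\cong S'_i$ and $GKS_i\cong S_i$) and is done, whereas you route through faithfulness of $G$ to make $\eta$ everywhere monic, then compare composition lengths of $KGP'_i$ and $P'_i$ via BGG reciprocity and condition~(3) to upgrade the monomorphism on projectives. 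This variant is a bit longer but correct; the only slip is the line ``so $K$ is fully faithful'' — from $\eta:1_{\A'}\to KG$ being an isomorphism one deduces that $G$ (the left adjoint), not $K$, is fully faithful, though this does not affect the remainder since you then prove the counit is an isomorphism independently.
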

\begin{proof}
   We first note that $T_j=\Delta_j =S_j$ and $T_j'=\Delta_j' =S_j'$, if  $j\in I$ is minimal in $(I,\leq)$. Therefore we have $K\Delta_j =\Delta_j'$. We proceed with induction on objects with respect to the ordering $\leq$ on $I$. Now suppose that $x\in I$ such that $K\Delta_i =\Delta_i'$, for any $i< x$.   Since $K$ is exact, we find   $K\Delta_x =\Delta'_x$ by our assumption. This shows that $K \Delta_i = \Delta_i'$, for any $i\in I$ since $I$ is a finite set.

  Again, we have $KS_j = S_j'$, if $j$ is the minimal element in $(I,\leq)$. Now suppose that $x\in I$ such that $KS_i =S_i'$ for any $i<x$. Since $K$ is exact we find that $KS_x =S_x'$ by our assumption and the fact $K\Delta_x =\Delta_x'$.

  Since $G$ is left adjoint to the  exact functor $K$, we know  that $G$ sends projective modules to projective modules. For any $i\in I$, we let $P_i$ denote the projective cover of $S_i$. By our assumption and \cite[Theorem 3.11]{CPS88} it follows that $[P_x:\Delta_y] =[P'_x:\Delta'_y]$.  For any $i,j\in I$ we have  $ \text{Hom}_{\mc O}(GP'_i, S_j) =  \text{Hom}_{\mc O}(P'_i, S_j')$ due to $KS_j=S_j'$. We may conclude that $GP_i' =P_i,$ for any $i\in I$.

  Let $j\in I$ be a maximal element.  Then $P_j = \Delta_j$, $P'_j =\Delta'_j$ and so  $G\Delta'_j =\Delta_j$.
  Suppose that $x\in I$ such that $G\Delta'_i =\Delta_i$ for any $i\in I$ with $x<i$.  The exactness of $G$, the fact that $[P_x:\Delta_i] = [P_x':\Delta_i']$ and $[P_x:\Delta_x] =1$ imply that $G\Delta'_x =\Delta_x$. Consequently,  $G\Delta'_i =\Delta_i$, for any $i\in I$ since $I$ is a finite set.  In particular, $GS'_j =S_j$ if $j$ is minimal in $I$. Using a similar argument as above, we may conclude that $GS'_i =S_i$, for any $i\in I.$

  Finally, let $\eta: 1_{\A'}\rightarrow KG$ and $\vare:GK \rightarrow 1_{\A}$ be respectively the unit and counit of the adjoint pair $(G,K)$. We claim that $\eta$ and $\vare$ are isomorphism of functors. By the  definition of unit and counit, $KS_i\neq 0$ and $GS_i'\neq 0$, which imply that $\eta_{S'_i}: S'_i \rightarrow KGS'_i$ and $\vare_{S_i}: GKS_i\rightarrow S_i$ are non-zero and so they are isomorphisms, for any $i\in I$. Now a standard argument using the Short Five Lemma completes the proof.
\end{proof}

\begin{proof}[Proof of Theorem \ref{thm::tysty}]
By \cite{Gor02b} it suffices to consider the case when $\la$ is typical, but not strongly typical, which means that $\la$ is of the form
	\begin{align}\label{typ:wt:1}
	&\la = a\omega_1 +b\omega_2,
	\end{align} for some $a,b\in \C.$	
	We first claim that \eqref{eq::sttilting} still holds in this case.
For any $m>0$ we set $L(m\delta):=L_{m\delta+\rho}$ to be the irreducible modules of highest weight $m\delta$.  We now choose $m\ge 2$ so that $L(m\delta)$ is a finite-dimensional module. In addition we choose $m$ in such a way that the following condition also holds: $|\langle \la ,\alpha^\vee\rangle|<2m$, for any short root $\alpha \in \Phi^+_{\ov 0}$ (i.e., $|b|, |3a+b|,|3a+2b|<2m$).
Then the weight $\la -m\delta$ is strongly typical by \eqref{eq::wtid}.

Denote by $F(-):\mc O_{\la -m\delta} \rightarrow \mc O_\la$ the usual translation functor. Since $\ch L(m\delta)$ is invariant under $W$, we find that if $\nu$ is a weight of $L(m\delta)$ with $\langle \nu, (2\delta)^\vee \rangle =m$, then $\nu =2m\delta$.  Since $\la$ is typical we have \[\ch FT_{\la -m\delta} = \sum_{\eta \in W_\la^1 \la-m\delta,~\eta\leq_{G_2}\la-m\delta }\ch F M_{\eta} =  \sum_{\mu \in W_\la^1 \la,~\mu\leq_{G_2}\la }\ch M_{\mu}. \]
Finally, we note $[T_\la: M_{\mu}]>0$, for $\mu \in W_\la^1 \la,~\mu\leq_{G_2}\la  $, by Proposition \ref{prop:flags}, and consequently,  $FT_{\la-m\delta} = T_\la$. This proves \eqref{eq::sttilting} for $\la$ of the form \eqref{typ:wt:1}.

Now we apply Lemma \ref{lem::42} to complete the proof.
 %By Lemma \ref{lem::42}, we find that $F(L_{\gamma'}) =L_{\gamma}$ for any $\gamma \in \text{Irr}\mc O_\la$.  Let $G:\mc O_\la \rightarrow \mc O_{\la -m\delta}$ be the left adjoint of $F$. We note that $ \text{Hom}_{\mc O}(GP_{\eta}, L_{\gamma'}) =  \text{Hom}_{\mc O}(P_{\eta}, L_{\gamma})$, for any $\gamma,\eta \in \text{Irr}\mc O_\la$ and $\eta':=\eta-m\delta, \gamma':= \gamma-m\delta$. This means that $GP_\eta=P_{\eta'}$. Since
\end{proof}

 %%%%%%%%%%%%%%%%%%%%%%%%%%%%%%%%%%%%%%%%%%%%%%%%%%%%%%%%%%%%%%%%%%%%%%%%%%%%%%
  \subsection{Generic Blocks}
   We shall show in this section that $\mc O_{\{e\}}$ is equivalent to a direct sum of the principal blocks of $\gl(1|1)$.

 A weight $\la \in \h^\ast$ is said to be {\em generic} if $W_\la^1$ is trivial, namely, $\langle \la,\gamma^\vee\rangle\notin \Z$ for any  $\gamma \in \Phi^+_{\oa, \circ}$.  It follows from Theorem \ref{thm::desnonblocks} (i) that if $\la$ is atypical and generic, then $W_\la$ is trivial. %$s_{2\delta}\notin W_\la$ (i.e. $\langle \la ,(2\delta)^\vee \rangle \notin \frac{1}{2}+\Z$)

 \begin{prop}
 	Suppose that $\la \in \h^\ast$ is an atypical and  generic weight with  $\alpha\in A(\mu)$.
 	Then $\emph{Irr} \mc O_\la =\{{\la +k\alpha}|~k\in \Z\}$. Furthermore, we have $\ch M_{\la+k\alpha} = \ch L_{\la +k\alpha} + \ch L_{\la +(k-1)\alpha},$ for any $k \in \Z$.
 \end{prop}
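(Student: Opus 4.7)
The plan is to split the proposition into two assertions: the listing of irreducible modules in the block, and the length-two character formula for each Verma module in the block.

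For the first assertion, I would observe that since $\la$ is generic, Theorem \ref{thm::desnonblocks}(1) gives $W_\la = \{e\}$, and then Theorem \ref{thm::blocks} directly yields $\mathrm{Irr}\,\mc O_\la = \{\la + k\alpha \mid k \in \Z\}$. Next I would check that every weight $\mu := \la + k\alpha$ is again atypical and generic: atypicality follows from $(\mu,\alpha) = (\la,\alpha) + k(\alpha,\alpha) = 0$ since $\alpha$ is isotropic; genericity is preserved because $\langle \alpha,\beta^\vee\rangle \in \Z$ for every $\beta \in \Phi^+_{\bar 0,\circ}$ and $\langle\alpha,(2\delta)^\vee\rangle = 1$, so adding integer multiples of $\alpha$ cannot move a non-integer coroot pairing into the appropriate integral lattice.

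Before invoking the Jantzen sum formula I need to know that $A(\mu) = \{\alpha\}$. Assume for contradiction that $\beta \in A(\mu)$ with $\beta \neq \alpha$. Lemma \ref{lem::11} supplies a reflection $s_\gamma$ with $\gamma \in \Phi_{\bar 0}^+$, sending $\alpha$ to $\beta$ and fixing $\mu$. Inspecting the three cases in the proof of Lemma \ref{lem::11} shows $\gamma \neq \pm 2\delta$, hence $\gamma \in \Phi^+_{\bar 0,\circ}$. But $s_\gamma\mu=\mu$ forces $\langle\mu,\gamma^\vee\rangle = 0 \in \Z$, contradicting genericity of $\mu$. Consequently, both the $\Phi^+_{\bar 0,\circ}$ and $2\delta$ summands in Proposition \ref{prop::JSF} vanish for $M_\mu$, and only $\gamma = \alpha$ contributes to the isotropic sum, giving
\begin{align*}
\sum_{i\ge 1}\ch M^i_\mu \;=\; \frac{\ch M_{\mu-\alpha}}{1+e^{-\alpha}} \;=\; \frac{e^{-\alpha}\ch M_\mu}{1+e^{-\alpha}}.
\end{align*}

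For the character formula I would combine two bounds. Using that $M^1_\mu$ is the radical of $M_\mu$ (so $M_\mu/M^1_\mu = L_\mu$), I get $\ch M_\mu - \ch L_\mu = \ch M^1_\mu \le \sum_{i\ge 1}\ch M^i_\mu$, which rearranges to the lower bound $\ch L_\mu \ge \frac{\ch M_\mu}{1+e^{-\alpha}}$. The same bound applied to $\mu - \alpha$ (which is again generic atypical with $A(\mu-\alpha) = \{\alpha\}$) gives $\ch L_{\mu-\alpha} \ge \frac{\ch M_{\mu-\alpha}}{1+e^{-\alpha}}$. Adding these two inequalities telescopes the denominator and yields $\ch L_\mu + \ch L_{\mu-\alpha} \ge \ch M_\mu$.

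For the matching upper bound I show $[M_\mu:L_{\mu-\alpha}] > 0$. Applying Proposition \ref{prop:flags}(3) to the weight $\nu := -\mu+\alpha$, which satisfies $(\nu,\alpha)=0$, gives $(T_{\nu}:M_{\nu-\alpha}) = (T_{-\mu+\alpha}:M_{-\mu}) > 0$, and BGG reciprocity then rewrites this as $[M_\mu:L_{\mu-\alpha}] > 0$. Together with $[M_\mu:L_\mu] = 1$, this produces $\ch M_\mu \ge \ch L_\mu + \ch L_{\mu-\alpha}$, and the two inequalities force equality. The main technical point, and the step I would most carefully justify, is the uniqueness $A(\mu) = \{\alpha\}$, as this is what collapses the Jantzen sum to a single isotropic term and makes the whole bounding argument go through cleanly.
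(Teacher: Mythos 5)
Your argument is correct, and it takes a genuinely different route from the paper in the crucial final step. The paper also begins by observing $A(\la+k\alpha)=\{\alpha\}$ (citing Lemma \ref{lem::11} tersely, where your contradiction argument via genericity is a useful explicit justification of that claim) and then applies the Jantzen sum formula to get $\sum_{i\ge 1}\ch M^i_\mu = \ch M_{\mu-\alpha}/(1+e^{-\alpha})$. The divergence is in how the last inequality is closed. The paper invokes \cite[Theorem 1.10]{Mu17}, which asserts that $\ch M_\mu/(1+e^{-\alpha})$ is the character of an actual $\g$-module; this makes $\ch M_{\mu-\alpha}/(1+e^{-\alpha})$ a subcharacter of $\ch M^1_\mu$ by a highest-weight-multiplicity argument, yielding the upper bound on $\ch L_\mu$ that pairs with the JSF lower bound. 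You instead establish $[M_\mu:L_{\mu-\alpha}]>0$ via BGG reciprocity/Soergel duality and Proposition \ref{prop:flags}(3), and then telescope the two JSF-based lower bounds $\ch L_\nu \ge \ch M_\nu/(1+e^{-\alpha})$ for $\nu=\mu,\mu-\alpha$ to force equality. Your approach is more self-contained within the machinery the paper has already set up (BGG reciprocity plus Proposition \ref{prop:flags}), whereas the paper's relies on an external positivity result from Musson; the paper's version, on the other hand, pins down $\ch L_\mu$ directly as $\ch M_\mu/(1+e^{-\alpha})$ without needing to exhibit a nontrivial composition factor first.
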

 \begin{proof}	Let $k\in \Z$.
  By      Lemma \ref{lem::11} we have     $A(\la +k\alpha) =\{\alpha\}.$ Therefore it follows from  Proposition~\ref{prop::JSF} that
\begin{align}\label{eq:JSF}
 	\sum_{i\ge 1} \ch M^i_{\la+k\alpha} =\frac{\ch M_{\la  +(k-1)\alpha}}{1+e^{-\alpha}}.
\end{align}
Since $\frac{\ch M_{\la +k\alpha }}{1+e^{-\alpha}}$ is the character of a $\mf g$-module by \cite[Theorem 1.10]{Mu17} for every $k$, it follows that $\ch M_{\la+k\alpha} =\frac{\ch M_{\la +k\alpha }}{1+e^{-\alpha}}+\frac{\ch M_{\la +(k-1)\alpha }}{1+e^{-\alpha}}$ is the sum of the characters of two $\mf g$-modules. Since $M_{\la+k\alpha}^1$ is the radical of $M_{\la+k\alpha}$, it follows that $\frac{\ch M_{\la +(k-1)\alpha }}{1+e^{-\alpha}}\le \ch M_{\la+k\alpha}^1$. (Here  $\le$ is understood in an obvious way.) However, \eqref{eq:JSF} implies that $\ch M_{\la+k\alpha}^1\le\sum_{i\ge 1} \ch M^i_{\la+k\alpha}=\frac{\ch M_{\la +(k-1)\alpha }}{1+e^{-\alpha}}$. Thus, $\ch M_{\la+k\alpha}^1=\frac{\ch M_{\la +(k-1)\alpha }}{1+e^{-\alpha}}$ and so $\ch L_{\la+k\alpha}=\frac{\ch M_{\la +k\alpha }}{1+e^{-\alpha}}$.
 \end{proof}

 %%%%%%%%%%%%%%%%%%%%%%%%%%%%%%%%%%%%%%%%%%%%%%%%%%%%%

 %-------------------------------------------------------------
\section{Reduction methods and characters in  $\mc O_{\Z_2}$} \label{Sect::1intblocks}

\subsection{Some Levi subalgebras} \label{sect::51} We recall the simple systems $\Pi^i$, their corresponding Borel subalgebras $\mf b^i$ and BGG cateogries $\mc O^{(i)}$ from Sections \ref{subsect::simplesys} and \ref{subsect::24}. In this section, we will  investigate some Levi subalgebras of $\mf g$.

For a given subset $S\subseteq \Pi^i$, recall that $\mf l_S$ denotes the Levi subalgebra given by $S$ as defined in Section \ref{subsect::24}. That is, $\mf l_S$ is generated by $\mf h$ and root vectors of roots $\pm \alpha$ for all $\alpha \in S$.

The following lemma follows from Cartan matrices of contragredient Lie superalgebras, see, e.g., \cite[Section 2.5]{Kac77} and \cite[Appendix B]{Mu17}.
\begin{lem} \label{lem::52} With the notations as above, we have the following isomorphisms:
	\begin{enumerate}
		\item[(1)] Let $S:=\{\vare_2 -\vare_1,~\delta +\vare_3\}\subset \Pi$. Then $\mf l_S\cong \mf{sl}(2)\oplus \mf{gl}(1|1)$.
		\item[(2)] Let  $S$ be either $\{\vare_2 -\vare_1,~\delta -\vare_2 \} \subset \Pi^2$, or $\{\vare_1,~ \delta +\vare_3\} \subset \Pi$. Then  $\mf l_S\cong \gl(2|1)$.
		\item[(3)] Let $S:=\{\delta -\vare_1,~\vare_1\}\subset \Pi^2$. Then $\mf l_S \cong \mf{osp}(3|2)$.
		%	\item[(4)] Let $S:=\{\vare_1,~ \delta +\vare_3\} \subset \Pi$.  Then  $\mf l_S \cong \gl(2|1)$. 				 				
	\end{enumerate}
\end{lem}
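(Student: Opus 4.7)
The plan is to verify each isomorphism by direct inspection, using the fact (Kac \cite{Kac77}) that a contragredient Lie superalgebra is determined, up to a central torus, by the parities of its simple roots together with the Gram matrix of the invariant form restricted to $\text{span}_{\C}S$, taken up to permutation and rescaling of rows. For each $S$, I will read off the parities from $\Phi_{\oa}\sqcup\Phi_{\ob}$, compute the $2\times 2$ Gram matrix using the form in Section~2.1.2, and enumerate the roots of $\mf g$ lying in $\Z S$ to recover the Dynkin diagram of $\mf l_S$.

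For part~(1), the computation gives $(\vare_2-\vare_1,\vare_2-\vare_1)=6$, $(\delta+\vare_3,\delta+\vare_3)=0$, and $(\vare_2-\vare_1,\delta+\vare_3)=0$. The two simple roots --- even non-isotropic and odd isotropic --- are mutually orthogonal, so the spanned root subsystem splits as $\{\pm(\vare_2-\vare_1)\}\sqcup\{\pm(\delta+\vare_3)\}$, and $\h$ decomposes accordingly as a $1+2$ sum, yielding $\mf{sl}(2)\oplus\gl(1|1)$. For part~(2), I will treat the two subsets in parallel: both have parity pattern (even, odd) with the odd root isotropic, and after rescaling the even row the Gram matrices coincide. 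Enumerating combinations in $\Phi$ gives in each case the six-root subsystem $\{\pm\alpha,\pm\beta,\pm(\alpha+\beta)\}$ with $\alpha$ even and $\beta,\alpha+\beta$ odd isotropic, matching the root system of $\mf{sl}(2|1)$; together with the $3$-dimensional $\h$ this gives $\gl(2|1)$.

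For part~(3), in $\Pi^2$ one verifies that $\delta-\vare_1$ is odd isotropic, $\vare_1$ is even with $(\vare_1,\vare_1)=2$, and $(\delta-\vare_1,\vare_1)=-2$. Enumerating the roots of $\mf g$ lying in $\Z S$ produces the $10$-element set $\{\pm 2\delta,\pm\vare_1\}\sqcup\{\pm\delta,\pm\delta\pm\vare_1\}$, with four even and six odd isotropic roots --- precisely the root system of $\mf{osp}(3|2)=B(1,1)$ in its distinguished simple system. The main subtlety, and the only point requiring care, is that $\mf{osp}(3|2)$ has a $2$-dimensional Cartan while $\mf l_S$ inherits the full $3$-dimensional $\h$; the extra dimension sits as a $1$-dimensional central ideal, which is the standard harmless ambiguity when carving out a Levi subalgebra whose root system has rank strictly less than $\dim\h$. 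I expect this Cartan-dimension bookkeeping, together with the bookkeeping of row-rescaling equivalences between Cartan matrices, to be the only nontrivial aspect of the proof.
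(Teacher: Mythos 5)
Your proof is correct and takes essentially the same route as the paper, which simply invokes the fact that a contragredient Lie superalgebra is determined (up to a central torus) by its Cartan matrix together with the parities of the simple roots, citing Kac and Musson; your Gram-matrix computations flesh out exactly that observation. One small slip in part~(3): the roots $\pm\delta$ are odd \emph{non-isotropic}, so the root set of $\mf l_S$ consists of four even roots, four odd isotropic roots $\pm\delta\pm\vare_1$, and two odd non-isotropic roots $\pm\delta$ --- matching $B(1,1)$ exactly --- not six odd isotropic roots as stated.
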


We refer the reader to \cite{CW08} and, respectively, Appendix \ref{sect::app} for the irreducible characters of $\gl(2|1)$ and $\mf{osp}(3|2)$. In this section, we denote by $\mc O^{\mf l}_{\pri}$ the principal block of $\mc O^{\mf l}$ in the case when $\mf l=\mf l_S$ as above.

 \subsection{Blocks of $\mc O_{\Z_2}$}
In this section we classify blocks of $\mc O_{\Z_2}$.

 \begin{thm} \label{Thm::1-intblocks}
 	Let $\la\in \h^\ast$ such that $W_\la^1 \cong \Z_2$. Suppose that $\alpha \in A(\la)$ and $\gamma \in Z(\la)$. Then  $W_\la \cong \Z_2$ or $W_\la=\Z_2\times \Z_2$. Furthermore, we have the following classification:
 	
 	\begin{enumerate}
 		\item[(1)] Suppose that $\gamma$ is a long root.
 		\begin{enumerate}
 			\item[(a)]If $(\gamma, \alpha) \neq 0$, then $W_\la \cong\Z_2$ and $\mc O_\la \cong \mc O_\pri^{\gl(2|1)}$.
 			\item[(b)]  If $(\gamma, \alpha) =0$, then $W_\la \cong\Z_2$ and $\mc O_\la \cong \mc O_\pri^{\mf{sl}(2) \oplus \gl(1|1)}$.
 		\end{enumerate}
 		
 		\item[(2)]   Suppose that $\gamma$ is a short root.
 		\begin{enumerate}
 			\item[(c)]If $(\gamma, \alpha) = \pm 2 $ with $\langle \la , (2\delta)^\vee \rangle \in \frac{1}{2} +\Z$, then $W_\la   \cong \Z_2 \times \Z_2$ and $\mc O_\la \cong \mc O_\mu^{\mf{osp}(3|2)}$, for some integral weight $\mu$ of $\mf{osp}(3|2)$.
 			\item[(d)] If $(\gamma, \alpha) = \pm 2 $ with $\langle \la , (2\delta)^\vee \rangle \notin \frac{1}{2} +\Z$, then $W_\la  \cong \Z_2$ and $\mc O_\la \cong \mc O_\mu^{\mf{osp}(3|2)}$, for some non-integral weight $\mu$ of $\mf{osp}(3|2)$.
 			\item[(e)]  If $(\gamma, \alpha) =\pm 1$. Then $W_\la  \cong \Z_2$ and $\mc O_\la \cong \mc O_\pri^{ \gl(2|1)}.$
 		\end{enumerate}
 	\end{enumerate}
 \end{thm}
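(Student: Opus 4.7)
The plan is to proceed case by case, first computing the full integral Weyl group $W_\la$ and then constructing the claimed equivalence of highest weight categories.

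For $W_\la$: by \eqref{eq::210} and the hypothesis $W_\la^1 \cong \Z_2$, one has $W_\la^1 = \langle s_\gamma\rangle$, so the only remaining question is whether $s_{2\delta} \in W_\la$, equivalently whether $d:=\langle\la,(2\delta)^\vee\rangle \in \tfrac{1}{2}+\Z$. I would write $\la = d\delta+a\omega_1+b\omega_2$, combine the atypicality relation $(\la,\alpha)=0$ with the conditions that all other pairings $\langle\la,\beta^\vee\rangle$ for $\beta\in\Phi^+_{\oa,\circ}\setminus\{\gamma\}$ are non-integral (forced by $W_\la^1 = \langle s_\gamma\rangle$), and then solve for $d$ modulo $\tfrac{1}{2}\Z$ in each subcase via \eqref{eq::Weylgpcoroot}. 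When $\gamma$ is long, or when $\gamma$ is short with $(\gamma,\alpha)=\pm 1$, the non-integrality of some other short root coroot pairing will force $2d\notin\Z$, yielding $W_\la \cong \Z_2$; only when $\gamma$ is short with $(\gamma,\alpha)=\pm 2$ does the argument leave $d\in\tfrac{1}{2}\Z$ unconstrained, producing exactly the dichotomy between (2)(c) and (2)(d).

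For the equivalences of highest weight categories, the plan is to use parabolic induction from the Levi $\mf l_S$ supplied by Lemma \ref{lem::52}. In each case I would first apply a sequence of odd reflections (Section \ref{subsect::simplesys}) to pass from $\mf b$ to a Borel $\mf b^i$ whose simple system contains $S$, so that $\alpha$ (possibly shifted) and the simple roots of $\mf l_S$ together span the $\mf l_S$-direction inside $\mf g$. By Theorem \ref{thm::blocks}, $\mathrm{Irr}\,\mc O_\la$ is a single $W_\la$-orbit in $\la+\Z\alpha$, all of whose members differ from $\la$ by elements of $\Z\Phi_{\mf l_S}$, so the entire linkage class lives within the $\mf l_S$-direction. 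Parabolic induction $\mathrm{Ind}_{\mf l_S+\mf u^+}^{\mf g}$ together with a Zuckerman-style right adjoint then restrict to an adjoint pair between $\mc O^{\mf l_S}_\mu$ and $\mc O_\la$ for the appropriate projected weight $\mu$; Proposition \ref{prop:comp:factor} identifies the Verma composition multiplicities on the two sides, and the functors visibly carry standards to modules admitting standard filtrations. Invoking the criterion of Lemma \ref{lem::42} then upgrades this data to an equivalence of highest weight categories.

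The hard part will be cases (2)(c) and (2)(d), where $\mf l_S = \mf{osp}(3|2)$ and the odd reflections needed to reach the simple system $\{\delta-\vare_1,\vare_1\}$ shift $\la$ by half-integral amounts in the $\delta$-direction. The dichotomy between (c) and (d) must then match the integrality of the projected $\mf{osp}(3|2)$-weight $\mu$; a careful check using \eqref{eq::Weylgpcoroot} and the shifted Borel should show that $s_{2\delta}\in W_\la$ is exactly the condition for $\mu$ to be an integral $\mf{osp}(3|2)$-weight. Verifying that parabolic induction preserves indecomposable tilting objects—the final hypothesis required for Lemma \ref{lem::42}—will rely on the explicit $\mf{osp}(3|2)$-tilting characters recorded in Appendix \ref{sect::app} for case (c), together with a parallel non-integral weight computation for case (d); by contrast, the $\gl(2|1)$ and $\mf{sl}(2)\oplus\gl(1|1)$ cases reduce to standard type-$A$ super BGG computations and should be routine.
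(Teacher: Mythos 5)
Your broad strategy—compute $W_\la$ case by case, then realize each block via parabolic induction from the Levi subalgebras of Lemma \ref{lem::52}—matches the paper's in outline, but there are two concrete problems with the route you sketch.

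First, you omit the normalization step that the paper carries out first and that makes the rest go through cleanly. Before any parabolic induction, the paper uses twisting functor equivalences (Lemma \ref{lem::tequiv}, implemented through ``good diagrams'', and packaged in Lemmas \ref{lem::1} and \ref{lem::2}) to reduce to exactly two configurations: $Z(\la)=\{\vare_2-\vare_1\}$ for the long-root case and $Z(\la)=\{-\vare_3\}$ for the short-root case, and then further good diagrams within each case reduce e.g. $\delta-\vare_3\in A(\la)$ to $\delta+\vare_3\in A(\la)$. Odd reflections alone (changing the Borel from $\mf b$ to $\mf b^i$) do not accomplish this: they change the Borel, not the weight, so they cannot move $\gamma$ or $\alpha$ into a position where a Levi of the new Borel lines up with the block. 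Without the twisting functors you would face many more genuinely distinct configurations, and in several of them your odd-reflected simple system would contain no convenient Levi copy of $\gl(2|1)$ or $\mf{osp}(3|2)$ to induce from.

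Second, and more seriously, invoking Lemma \ref{lem::42} inverts the logical structure and makes the argument circular in cases (2)(c) and (2)(d). Lemma \ref{lem::42} requires as input that the functor sends tilting modules to tilting modules and that the Verma-flag multiplicities $(T_x:\Delta_y)$ already agree on both sides. But one of the purposes of Theorem \ref{Thm::1-intblocks} is precisely to produce the equivalence so that the $G(3)$-tilting characters in $\mc O_{\Z_2}$ can be \emph{read off} from the $\mf{osp}(3|2)$-tilting characters in Appendix \ref{sect::app}; you cannot assume knowledge of $(T_x:\Delta_y)$ on the $G(3)$ side to verify the hypotheses of Lemma \ref{lem::42}. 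The paper avoids this entirely: it observes that (after the twisting-functor normalization and the odd reflections) the linkage class $\mathrm{Irr}\,\mc O_\la$ lies in $\la + \Z\Phi_{\mf l_S}$, so parabolic induction $\mathrm{Ind}_{\mf l_S+\mf b^i}^{\mf g}$ sends simple objects bijectively to simple objects, and then a standard adjunction, length-induction, and Short Five Lemma argument (as in \cite[Proposition 3.6]{CMW13}) already gives the equivalence with no tilting or multiplicity data required in advance. Lemma \ref{lem::42} is used elsewhere in the paper (for Theorem \ref{thm::tysty}), not here.

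Your computation of $W_\la$ itself from \eqref{eq::Weylgpcoroot} and the atypicality condition is fine and matches the paper's bookkeeping, and Proposition \ref{prop:comp:factor} is a reasonable auxiliary sanity check, but it is not needed once you take the simpler ``simple-preserving parabolic induction'' route.
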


 We will  in the remaining sections provide a proof of Theorem \ref{Thm::1-intblocks}.

 \subsection{Equivalence of twisting functors}
 For a given atypical weight $\la\in \h^\ast$ and $i=1,2$, the following diagram
 \begin{align}
 &[A(\la)|Z(\la)]  \xrightarrow{s_i} [s_iA(\la)| Z(s_i\la)], \label{gooddiagram}
 \end{align} is called \emph{good} if $\alpha_i\notin Z(\la).$ We note   the following  useful facts \begin{align}
 &A(s_i \la) =s_i A(\la), \label{eq::of1}\\
 &Z(s_i\la) =\{\gamma \in \Phi_{\oa, o}^+| ~\pm\gamma \in s_i Z(\la) \},\label{eq::of2}\\
 &(\gamma,\alpha) = (s_i\gamma,s_i\alpha), ~\text{for any }\alpha, \gamma.\label{eq::of3}
 \end{align}
 The definition of good diagram is motivated by the following lemma.

 \begin{lem} \label{lem::tequiv} If \begin{align*}
 	&[A(\la)|Z(\la)]  \xrightarrow{s_i} [s_iA(\la)| Z(s_i\la)],
 	\end{align*} is a good diagram, then $\mc O_\la$ and $\mc O_{s_i\la}$ are equivalent as highest weight categories.
 \end{lem}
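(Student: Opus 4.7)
The plan is to construct the equivalence via the Arkhipov--Soergel twisting functor $T_{s_i}$ associated with the simple reflection $s_i$, in the form extended to basic classical Lie superalgebras. The key input from that theory is the following dichotomy: when restricted to a subcategory in which every composition factor has $\alpha_i$-non-integral highest weight, $T_{s_i}$ is exact, sends $M_\mu \mapsto M_{s_i\mu}$, and satisfies $T_{s_i}^2 \cong \mrm{Id}$. So my task is to check that this non-integral regime covers the entire block $\mc O_\la$ and to read off preservation of the highest weight structure.

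The first step is to verify that the goodness condition $\alpha_i \notin Z(\la)$, which by definition is the same as $\langle \la, \alpha_i^\vee\rangle \notin \Z$, propagates to every weight in the block. By Theorem \ref{thm::blocks}, an arbitrary $\mu \in \mrm{Irr}\,\mc O_\la$ has the form $\mu = w(\la + k\alpha)$ for some $w \in W_\la$, $\alpha \in A(\la)$ and $k \in \Z$. Each isotropic root $\alpha = \delta \pm \vare_j \in A(\la)$ satisfies $\langle \alpha, \alpha_i^\vee\rangle \in \Z$ by \eqref{eq::Weylgpcoroot}, and every generator of $W_\la$ is a reflection $s_\gamma$ with $\gamma \in \Phi_{[\la]}$, which shifts the pairing $\langle\cdot,\alpha_i^\vee\rangle$ by an integer. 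A direct computation therefore gives $\langle \mu, \alpha_i^\vee\rangle \equiv \langle \la, \alpha_i^\vee\rangle \pmod \Z$, so $\alpha_i$-non-integrality is uniform across $\mc O_\la$.

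The second step is to apply the standard exactness/involutivity of $T_{s_i}$ on this uniformly non-integral subcategory. This yields an exact functor $T_{s_i}: \mc O_\la \to \mc O_{s_i\la}$ with $T_{s_i}M_\mu \cong M_{s_i\mu}$. For the reverse direction, note that $\alpha_i \notin Z(s_i\la)$: using \eqref{eq::of2}, $\alpha_i \in Z(s_i\la)$ would mean $\pm\alpha_i \in s_i Z(\la)$, hence $\alpha_i \in Z(\la)$ since $s_i\alpha_i = -\alpha_i$ and $Z(\la)\subseteq \Phi^+_{\oa,\circ}$, contradicting the hypothesis. Thus the same construction yields a functor $\mc O_{s_i\la} \to \mc O_\la$, and the involutivity in the non-integral regime gives an equivalence of abelian categories.

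Finally, to upgrade this to an equivalence of highest weight categories in the sense of Section \ref{subsect::24}, the bijection $\mu \mapsto s_i\mu$ on simple isomorphism classes sends standards to standards via $T_{s_i}M_\mu \cong M_{s_i\mu}$. Order-preservation follows because any relation $\nu < \mu$ in $\mrm{Irr}\,\mc O_\la$ requires $\mu - \nu \in \Z_{\ge 0}\Phi^+$, and the uniform $\alpha_i$-non-integrality of $\mu$ and $\nu$ forbids the difference from involving $\alpha_i$, so $s_i$ acts as an order automorphism. The main obstacle is the careful justification of the exactness and involutivity of $T_{s_i}$ in the super setting; that step rests entirely on the first one, since uniform $\alpha_i$-non-integrality rules out the singular composition factors which would otherwise force a derived correction.
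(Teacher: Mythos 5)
Your proposal takes essentially the same route as the paper: the equivalence is the twisting functor $T_{s_i}$, which in the $\alpha_i$-non-integral regime is an exact, involutive equivalence sending $M_\mu\mapsto M_{s_i\mu}$, and the block-level bookkeeping you supply (uniform $\alpha_i$-non-integrality and order preservation) is a correct elaboration of what the paper leaves implicit. The one point you gloss over is specific to the super setting: the result you invoke (essentially \cite[Proposition 8.6]{CM16}) is stated for the \emph{star} action, and the formula $T_{s_i}M_\mu\cong M_{s_i\mu}$ in the ordinary (dot) normalization requires knowing that star and dot coincide here; the paper justifies this by the observation that $\alpha_1$ and $\alpha_2$ are simple not only in $\Pi$ but also in the even root system $G_2$, so $s_i$ is a bona fide even simple reflection and no discrepancy arises. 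This is a short but necessary remark, and your ``main obstacle'' sentence attributes the difficulty solely to non-integrality, whereas non-integrality alone would not give the identification of the star and dot actions; you should add the simple-in-$G_2$ observation to make the invocation of the twisting-functor equivalence airtight.
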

 \begin{proof}
 	Let $T_{s_i}$ be the twisting functor corresponding to $s_i$. We note that $\alpha_1$, $\alpha_2$ are also simple in the root system of $G_2$, therefore  the star action coincides with the dot action for (non-shiffted) weights (see, e.g., \cite[Section 8]{CM16}). By \cite[Proposition 8.6]{CM16}, $T_{s_i}: \mc O_\la \rightarrow \mc O_{s_i \la}$ is an equivalence of categories sending a Verma module of highest weight $\mu$ to a Verma module of highest weight $s_i \mu$. This completes the proof.
 \end{proof}

 \begin{lem} \label{lem::1} Let $\la \in   \h^\ast$ such that $W_\la^1\cong \Z_2$ with  $\gamma\in Z(\la)$ a long root. Then there is $w\in W$ such that $\mc O_\la \cong \mc O_{w\la}$ with $Z(w\la) =\{\vare_2-\vare_1\}$.
 \end{lem}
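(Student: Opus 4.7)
The plan is to reduce to the case $Z(w\la)=\{\vare_2-\vare_1\}$ by iterating the twisting equivalence of Lemma~\ref{lem::tequiv} along good diagrams. Since $W_\la^1\cong\Z_2$ and $W_\la^1$ is generated by the reflections $s_{\gamma'}$ for $\gamma'\in Z(\la)$, the set $Z(\la)$ must consist of a single positive root: two distinct positive roots would give distinct reflections generating a dihedral group of order at least four. Because $\gamma$ is a long root of $G_2$, this single root lies in $\{\vare_2-\vare_1,\ \vare_1-\vare_3,\ \vare_2-\vare_3\}$, and I would handle these three subcases separately.

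The computation driving the argument is the explicit action of the simple reflections: $s_1=s_{\vare_2-\vare_1}$ swaps $\vare_1\leftrightarrow\vare_2$ and fixes $\vare_3$, while $s_2=s_{\vare_1}$ sends $\vare_1\mapsto -\vare_1$ and swaps $\vare_2\leftrightarrow -\vare_3$. If $\gamma=\vare_2-\vare_1$, take $w=e$. If $\gamma=\vare_1-\vare_3$, then $\alpha_2=\vare_1\notin Z(\la)=\{\vare_1-\vare_3\}$, so the diagram \eqref{gooddiagram} with $i=2$ is good; since $s_2(\vare_1-\vare_3)=\vare_2-\vare_1$, formula \eqref{eq::of2} yields $Z(s_2\la)=\{\vare_2-\vare_1\}$, and Lemma~\ref{lem::tequiv} gives $\mc O_\la\cong\mc O_{s_2\la}$. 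If $\gamma=\vare_2-\vare_3$, first apply $s_1$ (good since $\alpha_1=\vare_2-\vare_1\neq\vare_2-\vare_3$) to obtain $Z(s_1\la)=\{s_1(\vare_2-\vare_3)\}=\{\vare_1-\vare_3\}$, and then fall into the previous case via $s_2$; this produces $w=s_2s_1$ with $Z(w\la)=\{\vare_2-\vare_1\}$, and two successive applications of Lemma~\ref{lem::tequiv} yield $\mc O_\la\cong\mc O_{w\la}$.

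There is no substantive obstacle: at each step the diagram is good because the current $Z$ consists of a single long root while the chosen $\alpha_i$ is either a short root ($\alpha_2=\vare_1$) or the target long root $\alpha_1=\vare_2-\vare_1$, so no clash with the current $Z$ can occur. The essential bookkeeping is the transport of $Z$ under simple reflections via \eqref{eq::of2}, combined with the standard observation that the positive long roots of $G_2$ form a single $W_{G_2}$-orbit containing $\vare_2-\vare_1$.
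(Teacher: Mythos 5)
Your proof is correct and follows essentially the same route as the paper: the paper's proof also reduces the cases $Z(\la)=\{\vare_1-\vare_3\}$ and $Z(\la)=\{\vare_2-\vare_3\}$ to $\{\vare_2-\vare_1\}$ by applying $s_2$ and $s_1$ respectively along good diagrams, then invoking Lemma~\ref{lem::tequiv}. You add a bit more justification (confirming that $Z(\la)$ is a singleton when $W_\la^1\cong\Z_2$, and spelling out the reflection computations), but the structure and the key diagrams are identical.
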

 \begin{proof}
 	Using Lemma \ref{lem::tequiv}, the proof follows from  the following two possibilities of good diagrams
 	\begin{align}
 	&[A(\la)|\vare_1 -\vare_3]  \xrightarrow{s_2} [s_2A(\la)| \vare_2 - \vare_1],\\
 	&[A(\la)|\vare_2 -\vare_3]  \xrightarrow{s_1} [s_1A(\la)| \vare_1 - \vare_3].
 	\end{align}
 \end{proof}

 \begin{lem} \label{lem::2} Let $\la \in  \h^\ast$ and suppose that $\gamma\in Z(\la)$ a short root. Then there is $w\in W$ such that $\mc O_\la \cong \mc O_{w\la}$ with $Z(w\la) =\{-\vare_3\}$.
 \end{lem}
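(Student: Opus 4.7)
The plan is to mimic the argument of Lemma \ref{lem::1} via a succession of good diagrams, transporting $\gamma$ to $-\vare_3$ by applications of the simple reflections $s_1, s_2$. The short positive roots of $G_2$ are $\vare_1$, $\vare_2$, and $-\vare_3$; in the context of $W_\la^1 \cong \Z_2$ relevant to Theorem \ref{Thm::1-intblocks}(2), the set $Z(\la)$ is the singleton $\{\gamma\}$, so there are three cases.

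The case $\gamma = -\vare_3$ is trivial ($w=e$). For $\gamma = \vare_2$, the condition $\alpha_2 = \vare_1 \notin Z(\la)$ is immediate, so by \eqref{gooddiagram}
\[
[A(\la)|\vare_2] \xrightarrow{s_2} [s_2 A(\la)|-\vare_3]
\]
is a good diagram, where the new $Z$ is computed via \eqref{eq::of2} using $s_2(\vare_2) = \vare_1+\vare_2 = -\vare_3 \in \Phi^+_{\bar 0, \circ}$. For $\gamma = \vare_1$, the reflection $s_2$ fails to give a good diagram since $\alpha_2 = \vare_1 \in Z(\la)$, but $\alpha_1 = \vare_2-\vare_1 \notin Z(\la)$, so
\[
[A(\la)|\vare_1] \xrightarrow{s_1} [s_1 A(\la)|\vare_2]
\]
is good via $s_1(\vare_1) = \vare_2$, after which we reduce to the previous case. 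Applying Lemma \ref{lem::tequiv} at each step and concatenating the resulting equivalences of highest weight categories, the required $w$ equals $e$, $s_2$, or $s_2 s_1$, respectively.

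The only subtlety worth flagging is the verification at each step that the image $s_i\gamma$ actually lies in $\Phi^+_{\bar 0, \circ}$ (and not its negative), so that $Z(s_i\la)$ really is a positive short root and hence equal to $\{-\vare_3\}$ or $\{\vare_2\}$ as claimed; this follows directly from the two explicit reflection computations above. No interaction with $A(\la)$ or with odd roots enters, since the good-diagram hypothesis in \eqref{gooddiagram} depends only on $Z(\la)$, so the argument is purely combinatorial and parallels Lemma \ref{lem::1} exactly.
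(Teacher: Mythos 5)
Your proof is correct and follows essentially the same route as the paper: you produce the identical pair of good diagrams $[A(\la)|\vare_2]\xrightarrow{s_2}[s_2A(\la)|-\vare_3]$ and $[A(\la)|\vare_1]\xrightarrow{s_1}[s_1A(\la)|\vare_2]$ and invoke Lemma \ref{lem::tequiv}. The only addition is your explicit verification that $s_2\vare_2=-\vare_3$ and $s_1\vare_1=\vare_2$, which the paper leaves implicit.
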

 \begin{proof}  We have the following good diagrams
 	\begin{align}
 	&[A(\la)|\vare_2]  \xrightarrow{s_2} [s_2A(\la)| -\vare_3], \\
 	&[A(\la)|\vare_1]  \xrightarrow{s_1} [s_1A(\la)| \vare_2].
 	\end{align}  The proof follows from Lemma \ref{lem::tequiv}.
 \end{proof}

\subsection{Proof of Theorem \ref{Thm::1-intblocks}}  By Lemmas \ref{lem::1}, \ref{lem::2} and \eqref{eq::of1}--\eqref{eq::of3}, we have reduced the proof of Theorem \ref{Thm::1-intblocks} to two cases:  $Z(\la) =\{\vare_2-\vare_1\}$ and $Z(\la) =\{-\vare_3\}$. Also, we recall that  the simple systems $\Pi^i$, Borel subalgebras $\mf b^i$ and the corresponding BGG categories $\mc O^{(i)}$ from Sections \ref{subsect::simplesys} and \ref{subsect::24},  for $0\leq i\leq 3$.

 \subsubsection{{\bf Case 1}: $Z(\la) =\{\vare_2-\vare_1\}$}\label{subsubsect:aux1}

 We assume in this Section \ref{subsubsect:aux1} that $\la\in \h^\ast$ such that $W_\la^1\cong \Z_2$   with $$Z(\la) =\{\gamma:=\vare_2-\vare_1\}.$$
 Before describing the blocks of this type, we need the following.

\begin{itemize}\label{item::441}
	\item[(a)] Assume that there is  $ \alpha := \delta +c\vare_3 \in A(\la)$, for   $c\in \{\pm 1\}$.   By Theorem \ref{thm::blocks} we have  $A(\la) = \{\alpha\}$ and $$\text{Irr}\mc O_\la =\{\la+ k\alpha, s_1\la +k\alpha|~k\in \Z\}.$$  Also, in this case we have $(\gamma,\alpha)=0.$
		\item[(b)]
		Assume that there is an $i\in \{1,2\}$ and $ c=\{\pm 1\}$ such that $ \delta +c\vare_i \in A(\la)$. Using  Proposition \ref{prop::JSF} to replace $\la$ by $\la +(-1)^{i+1}c\langle \la ,\gamma^\vee \rangle (\delta +c\vare_i)$, if necessarily, we can assume that $\langle \la, \gamma^\vee \rangle =0$. Let $\alpha =\delta +c\vare_1$ and $\beta =\delta +c\vare_2$, then $A(\la) = \{\alpha, \beta\}$ and $$\text{Irr}\mc O_\la =\{\la+ k\alpha, \la +k\beta|~k\in \Z\}$$ by Theorem \ref{thm::blocks}.  Also, in this case we have $(\gamma,\alpha)\neq 0, (\gamma,\beta)\neq 0.$
\end{itemize}

 \begin{lem}
 	Assume that $(\gamma,\alpha)=0$. Then  $W_\la =\langle s_1\rangle$ and $\mc O_\la \cong \mc O_\pri^{\mf{sl}(2) \oplus \gl(1|1)}$.
 \end{lem}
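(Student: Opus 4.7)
My plan is to first pin down $W_\la$ and then construct the categorical equivalence by parabolic induction from the Levi of Lemma~\ref{lem::52}(1). Since $W_\la^1\cong \Z_2$ and the standing hypothesis of Section 5.4.1 gives $Z(\la)=\{\gamma\}$, Theorem~\ref{thm::desnonblocks}(2) forces $W_\la^1 = \langle s_1\rangle$. To rule out $s_0\in W_\la$, I would write $\la = d\delta + X\vare_1 + Y\vare_2$ and use the isotropy $(\la,\delta+c\vare_3)=0$; a direct computation combined with \eqref{eq::Weylgpcoroot} yields $2d = \mp\langle\la,(-\vare_3)^\vee\rangle$, so $d\in \hf + \Z$ would force $-\vare_3\in Z(\la)$, contradicting the hypothesis that $Z(\la)=\{\gamma\}$. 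Hence $W_\la = \langle s_1\rangle$, and Theorem~\ref{thm::blocks} together with $s_1\alpha=\alpha$ (because $(\gamma,\alpha)=0$) gives
\[
\textup{Irr}\,\mc O_\la = \{\la+k\alpha,\, s_1\la+k\alpha \mid k\in\Z\}.
\]

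For the equivalence, I would take $S = \{\gamma,\alpha\}$. When $c=1$ we have $S\subset\Pi^0$ directly; when $c=-1$ a preliminary sequence of odd reflections produces a Borel in which $\{\gamma,\delta-\vare_3\}$ are simple. Either way, Lemma~\ref{lem::52}(1) identifies $\mf l_S\cong \mf{sl}(2)\oplus \gl(1|1)$, with the $\mf{sl}(2)$-factor generated by $\gamma$ and the $\gl(1|1)$-factor by $\alpha$. Let $\mf p=\mf l_S\oplus\mf u$ be the associated parabolic subalgebra and consider the parabolic induction $K:=\textup{Ind}^{\mf g}_{\mf p}:\mc O^{\mf l_S}\to\mc O$ together with its exact left adjoint $G$. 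Matching central characters via Theorem~\ref{thm::3} shows $K$ sends the principal block $\mc O^{\mf l_S}_\pri$ into $\mc O_\la$ and carries $\mf l_S$-Vermas to $\mf g$-Vermas. I would then apply Lemma~\ref{lem::42} to obtain the equivalence, verifying (i) exactness of $K$ and $G$, (ii) preservation of Verma modules, and (iii) equality of the Verma-composition and tilting-Verma multiplicities on both sides.

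The main obstacle is item (iii). On the $\mf l_S$-side these multiplicities for the principal block of $\mf{sl}(2)\oplus\gl(1|1)$ are classical. On the $\mf g$-side I would derive the same data directly from the super Jantzen sum formula (Proposition~\ref{prop::JSF}) together with Proposition~\ref{prop:flags}. The crucial point is that both $Z(\la)=\{\gamma\}$ and $A(\la)=\{\alpha\}$ lie inside $\mf l_S$, so every contribution to the right-hand side of the Jantzen sum for any $M_\mu$ in $\mc O_\la$ comes from reflections and isotropic roots inside $\mf l_S$; no extraneous subquotients arise from roots in $\Phi^+\setminus\Phi(\mf l_S)$. Consequently the Verma-flag and composition-factor data on the two sides match, which together with Lemma~\ref{lem::42} yields the desired equivalence of highest weight categories $\mc O_\la\cong \mc O_\pri^{\mf{sl}(2)\oplus\gl(1|1)}$.
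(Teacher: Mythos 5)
Your identification of $W_\la=\langle s_1\rangle$ is fine (the computation that $d\in\tfrac12+\Z$ would force $-\vare_3\in Z(\la)$ is correct), and the idea of producing the equivalence via parabolic induction from the Levi $\mf l_S\cong\mf{sl}(2)\oplus\gl(1|1)$ is the right one. However, two parts of your plan do not go through.

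First, the $c=-1$ case. You propose to pass, by odd reflections, to a Borel in which $\delta-\vare_3$ is simple. This is not possible: up to $W$-conjugacy there are exactly four simple systems $\Pi^0,\dots,\Pi^3$, and none of them contains $\pm(\delta-\vare_3)$; odd reflections merely move you between these and their $W$-conjugates. More concretely, a $W_{G_2}$-element $w$ with $w(\vare_3)=-\vare_3$ and $w(\vare_2-\vare_1)=\vare_2-\vare_1$ sends $\vare_1\mapsto -\vare_2$, so $w\Pi^0=\{\vare_2-\vare_1,-\vare_2,\delta-\vare_3\}$, which is a simple system for a different (non-odd-reflected) Borel. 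The paper instead handles $c=-1$ by a chain of twisting functors (the ``good diagrams''), moving the \emph{weight} so that $A$ becomes $\{\delta+\vare_3\}$ while $Z$ returns to $\{\vare_2-\vare_1\}$, and only then applies parabolic induction with $S\subset\Pi$.

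Second, Lemma~\ref{lem::42} is not the right tool here and cannot be invoked: it is stated for highest weight categories whose simple objects are indexed by a \emph{finite} set, whereas $\mathrm{Irr}\,\mc O_\la=\{\la+k\alpha,\,s_1\la+k\alpha\mid k\in\Z\}$ is infinite. Moreover, even formally, Lemma~\ref{lem::42}(2) requires $KT_i=T_i'$, which your verification scheme does not address, and your plan to supply condition (3) from the Jantzen sum formula is close to circular, since those multiplicities are precisely what one is trying to establish. The paper avoids all this by the standard parabolic-induction argument (following \cite{CMW13}): since every weight in $\mathrm{Irr}\,\mc O_\la$ differs from $\la$ by an element of $\Z\Pi^{\mf l_S}$, the functor $\mathrm{Ind}^{\mf g}_{\mf l+\mf b}$ sends simples to simples, and adjunction together with induction on length and the Short Five Lemma yields the equivalence directly, without any a priori knowledge of composition or tilting multiplicities.
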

 \begin{proof}
 	In this case, we have 	$A(\la) = \{\alpha:= \delta+c \vare_3\}$, for   $c=\pm 1.$
 	
 	We first consider $A(\la) = \{\alpha:= \delta+ \vare_3\}$. Then $\text{Irr}\mc O_\la =\{\la +k\alpha, s_1\la +k\alpha|~k\in \Z\}$.  We take the Levi subalgebra $\mf l:=\mf l_S\cong \mf{sl}(2)\oplus \mf{gl}(1|1)$ with $S:=\{\vare_2 -\vare_1, \delta +\vare_3\} \subset \Pi$ from Lemma \ref{lem::52}. Note that $s_1 \la +k\alpha \in \la +\Z (\vare_2 -\vare_1) +\Z \alpha$. Then the parabolic induction functor $\text{Ind}_{\mf l+\mf b}^{\mf g}: \mc O^{\mf l}_{\la} \rightarrow \mc O_\la$ sends simple objects to simple objects. By standard arguments using adjunction,
 	induction on the length of a module, and the Short Five Lemma, see, e.g., the proof of \cite[Proposition 3.6]{CMW13}, it follows that $\text{Ind}_{\mf l+\mf b}^{\mf g}: \mc O^{\mf l}_\la\rightarrow \mc O_\la$ is an equivalence of categories. We may note that $\mc O_\la^{\mf l}\cong \mc O_\pri^\mf{l}$.  	%We recall that the functor $\text{Ind}_{\mf l+\mf b}^{\mf g}(-)$ has a right adjoint $\text{Res}^\g_{\mf l,\la}(-)$. , is thus an equivalence.

 	Next, we consider $A(\la) = \{  \delta- \vare_3\}$. In this case, we have the following sequence of good diagrams
 	\begin{align*}
 	&[\delta- \vare_3|\vare_2 -\vare_1]  \xrightarrow{s_2} [\delta+\vare_2| \vare_1 - \vare_3]\xrightarrow{s_1} [\delta+\vare_1| \vare_2 - \vare_3]\\
 	&\xrightarrow{s_2} [\delta-\vare_1| \vare_2 - \vare_3]\xrightarrow{s_1} [\delta-\vare_2| \vare_1 - \vare_3]\xrightarrow{s_2} [\delta+\vare_3| \vare_2 - \vare_1].
 	\end{align*}  By Lemma \ref{lem::tequiv}, the proof of this case now reduces to the former case. The conclusion follows.
 \end{proof}

 \begin{lem}
 	Assume that $(\gamma,\alpha)\neq 0$. Then $W_\la =\langle s_1 \rangle$ and $\mc O_\la \cong \mc O_\pri^{\gl(2|1)}$.
 \end{lem}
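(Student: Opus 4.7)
Since $W_\la^1 = \langle s_1 \rangle$ by assumption, to establish $W_\la = \langle s_1 \rangle$ it suffices to show $s_0 \notin W_\la$, i.e.\ $\langle\la,(2\delta)^\vee\rangle \notin \tfrac{1}{2} + \Z$. The atypicality $(\la,\alpha) = 0$ with $\alpha = \delta + c\vare_i$ gives $\langle\la,(2\delta)^\vee\rangle = c(\la,\vare_i)/2$, and since $\vare_i \notin Z(\la) = \{\gamma\}$ we have $(\la,\vare_i) = \langle\la,\vare_i^\vee\rangle \notin \Z$, yielding the claim.

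For the equivalence $\mc O_\la \cong \mc O^{\gl(2|1)}_\pri$, I mirror the previous lemma. By item (b), after shifting $\la$ along $\alpha$ via Proposition~\ref{prop::JSF} if necessary, I may assume $\langle\la,\gamma^\vee\rangle = 0$, so that $A(\la) = \{\alpha,\beta\}$ with $\alpha = \delta + c\vare_1$ and $\beta = \delta + c\vare_2$ for some $c \in \{\pm 1\}$, and $\mathrm{Irr}\,\mc O_\la = \{\la + k\alpha,\la + k\beta \mid k \in \Z\} \subset \la + \Z\gamma + \Z\alpha$. For the subcase $c = -1$, note that $S := \{\vare_2 - \vare_1,\delta - \vare_2\}$ is a subset of the simple system $\Pi^1$, so by Lemma~\ref{lem::52}(2) the Levi $\mf l := \mf l_S \cong \gl(2|1)$, and its root system $\{\pm(\vare_2 - \vare_1),\pm(\delta - \vare_1),\pm(\delta - \vare_2)\}$ contains all root differences among the weights in $\mathrm{Irr}\,\mc O_\la$, so those weights parameterize a single block $\mc O^{\mf l}_\la \cong \mc O^{\gl(2|1)}_\pri$ of $\mc O^{\mf l}$. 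Working inside $\mc O^{(1)} \cong \mc O$, the parabolic induction $\mathrm{Ind}^{\mf g}_{\mf l + \mf b^1}: \mc O^{\mf l}_\la \to \mc O_\la$ sends simple modules to simple modules, because for each highest weight $\mu$ in the block, $\mu$ is typical with respect to every isotropic odd root of $\mf g$ outside $\mf l$. The adjunction-plus-Short-Five-Lemma argument from the previous lemma (following \cite[Proposition 3.6]{CMW13}) then upgrades this to an equivalence of highest weight categories.

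For $c = +1$, I reduce to the $c = -1$ case by the following chain of good diagrams:
\begin{align*}
[\{\delta+\vare_1,\,\delta+\vare_2\} \mid \vare_2 - \vare_1]
&\xrightarrow{s_2} [\{\delta-\vare_1,\,\delta-\vare_3\} \mid \vare_1 - \vare_3]
\xrightarrow{s_1} [\{\delta-\vare_2,\,\delta-\vare_3\} \mid \vare_2 - \vare_3] \\
&\xrightarrow{s_2} [\{\delta+\vare_2,\,\delta+\vare_3\} \mid \vare_2 - \vare_3]
\xrightarrow{s_1} [\{\delta+\vare_1,\,\delta+\vare_3\} \mid \vare_1 - \vare_3] \\
&\xrightarrow{s_2} [\{\delta-\vare_1,\,\delta-\vare_2\} \mid \vare_2 - \vare_1].
\end{align*}
One checks at each step that the reflected simple root (namely $\vare_1$ for $s_2$ or $\vare_2 - \vare_1$ for $s_1$) does not lie in the current $Z$, so the diagram is good. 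By Lemma~\ref{lem::tequiv} each arrow gives an equivalence of highest weight categories, and composing these with the equivalence already established in the $c = -1$ case yields $\mc O_\la \cong \mc O^{\gl(2|1)}_\pri$.

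The chief obstacle is verifying that $\mathrm{Ind}^{\mf g}_{\mf l + \mf b^1}$ is a genuine categorical equivalence---in particular, that $\mathrm{Ind}\,L^{\mf l}_\mu$ remains simple in $\mc O$ for each highest weight $\mu$ parameterizing the block. This requires analyzing the action of the nilpotent radical of the parabolic on the induced module and ruling out new primitive vectors, for which Proposition~\ref{prop::JSF} combined with the fact that the isotropic odd roots outside $\mf l$ pair non-trivially with each weight $\la + k\alpha$ and $\la + k\beta$ provides the needed control. Once simplicity is secured, matching the highest weight structure and extending to an equivalence of highest weight categories is routine.
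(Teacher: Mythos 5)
Your proof is correct and follows essentially the same route as the paper's: after shifting along $\alpha$ to assume $\langle\la,\gamma^\vee\rangle=0$ (item (b)), the case $c=-1$ is handled by parabolic induction from the $\gl(2|1)$-Levi attached to $\{\vare_2-\vare_1,\delta-\vare_2\}\subset\Pi^1$, and the case $c=+1$ is reduced to it via the same chain of good twisting-functor diagrams. Your explicit check that $s_0\notin W_\la$ (from $\langle\la,(2\delta)^\vee\rangle=\tfrac{c}{2}\langle\la,\vare_i^\vee\rangle$ with $\langle\la,\vare_i^\vee\rangle\notin\Z$) is a welcome addition the paper leaves implicit; the only detail you pass over, which the paper states, is that the odd reflection by $\delta+\vare_3$ preserves the highest-weight labels so that $\mathrm{Irr}\,\mc O^{(1)}_\la=\mathrm{Irr}\,\mc O_\la$.
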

 \begin{proof}
 	Observe that we have 	$A(\la) = \{ \delta+ c\vare_1,  \delta +c\vare_2\}$, for some $c=\pm 1$. In this case, we may assume that $\langle \la, (\vare_2 -\vare_1)^\vee\rangle =0$ by the analysis in \eqref{item::441}(b).
 	
 	We first consider the case $A(\la) = \{\alpha:= \delta- \vare_1,~\beta:= \delta -\vare_2\}$. Then $\text{Irr}\mc O_\la =\{\la +k\alpha, \la +k\beta|~k\in \Z\}$.  %Recall that $\mc O^{(1)}$ denotes the BGG category  of $\mf b^1$, where $\mf b^1$  is the Borel subalgebra corresponding to the simple system $\Pi^1 = \{\vare_2 -\vare_1,~\delta-\vare_2,~-\delta -\vare_3\}.$
 	We note that $\mf b^1$ is obtained from $\mf b$ by applying odd reflection with respect to $\delta+\vare_3$.  By the rule of changing highest weights under odd reflection we find that $\text{Irr}\mc O^{(1)}_\la =\text{Irr}\mc O_\la$.   We take the Levi subalgebra $\mf l :=\mf l_S\cong \gl(2|1)$ with  $S:=\{\vare_2 -\vare_1,~\delta -\vare_2\} \subset \Pi^1$ from Lemma \ref{lem::52}. Then parabolic induction functor $\text{Ind}_{\mf l+\mf b^1}^{\mf g}: (\mc O^{(1)})^{\mf l}_\la \rightarrow \mc O^{(1)}_\la$ sends simple objects to simple objects, and is thus an equivalence, as explained above.

 	Next, we  consider the case $A(\la) = \{\alpha:= \delta+ \vare_1,~\beta:= \delta +\vare_2\}$. Then $\text{Irr}\mc O_\la =\{\la +k\alpha, \la +k\beta|~k\in \Z\}$. In this case, we  apply the following good sequence of diagrams
 	\begin{align*}
 	&[\delta+\vare_1, ~\delta+\vare_2|\vare_2 -\vare_1]  \xrightarrow{s_2} [\delta-\vare_1, ~\delta-\vare_3|\vare_1 -\vare_3]  \xrightarrow{s_1}[\delta-\vare_2, ~\delta-\vare_3|\vare_2 -\vare_3] \\
 	&\xrightarrow{s_2} [\delta+\vare_3, \delta+\vare_2| \vare_2 - \vare_3]\xrightarrow{s_1} [\delta+\vare_3, \delta+\vare_1| \vare_1 - \vare_3]\xrightarrow{s_2} [\delta-\vare_2, \delta-\vare_1| \vare_2 - \vare_1].
 	\end{align*}  By Lemma \ref{lem::tequiv}, the proof of this case now reduces to the former case.
 \end{proof}

 \subsubsection{{\bf Case 2}: $Z(\la) =\{-\vare_3\}$}\label{subsubsect:aux2}

 We assume in this Section \ref{subsubsect:aux2} that $\la\in \h^\ast$ with $$Z(\la) =\{\gamma:=-\vare_3\}.$$ Before describing the blocks of this type, we need the following.

\begin{itemize} \label{item::442}
	\item[(a)] Assume that    $ \alpha := \delta +c\vare_3 \in A(\la)$ with $c = \pm 1$ and suppose that $\langle \la, (2\delta)^\vee\rangle \in \frac{1}{2} +\Z$.  Let  $\beta :=\delta -c\vare_3$.   By  Theorem \ref{thm::blocks} we have  $$\text{Irr}\mc O_\la =\{\la+ k\alpha, s_{2\delta}\la +k\beta, s_{\gamma}\la+k\beta, s_{2\delta}s_{\gamma}\la +k\alpha|~k\in \Z\}.$$ Also, in this case we have $(\gamma,\alpha), (\gamma,\beta)=\pm 2.$
	
	\item[(b)]Assume that there is  $\alpha := \delta +c\vare_3 \in A(\la)$ with $c =\pm 1$ and suppose that $\langle \la, (2\delta)^\vee\rangle \notin \frac{1}{2} +\Z$.  Let  $\beta :=\delta -c\vare_3$. By Proposition \ref{prop::JSF}, replacing $\la$ by $\la -\langle\la , (2\delta)^\vee \rangle \alpha$ if necessarily, we may assume that $\langle \la,\vare_3^\vee \rangle =0$. By  Theorem \ref{thm::blocks} we have  $$\text{Irr}\mc O_\la =\{\la+ k\alpha, \la +k\beta|~k\in \Z\}.$$  Also, in this case we have $(\gamma,\alpha), (\gamma,\beta)=\pm 2.$
	
	\item[(c)] Assume that there is $i\in \{1,2\}$ and $ c=\pm 1$ such that $ \delta +c\vare_i \in A(\la)$. By Proposition \ref{prop::JSF}, replacing $\la$ by $\la -c\langle \la ,\gamma^\vee \rangle (\delta +c\vare_i)$ if necessarily, we may assume that $\langle \la, \vare_3^\vee \rangle =0$. Let $\alpha =\delta +c\vare_i$ and $\beta =\delta -c\vare_j$  with $\{i, j\} =\{1,2\}$.  By Theorem \ref{thm::blocks} we have  $$\text{Irr}\mc O_\la =\{\la+ k\alpha, \la +k\beta|~k\in \Z\}.$$  Also, in this case we have $(\gamma,\alpha), (\gamma,\beta)=\pm 1.$
\end{itemize}

 \begin{lem} \label{lem::1_1} Suppose that $(\gamma, \alpha) = \pm 2 $ with $\langle \la , (2\delta)^\vee \rangle \in \frac{1}{2} +\Z$. Then $W_\la  = \langle s_{0}, s_{\gamma}\rangle$ and $\mc O_\la \cong \mc O_\mu^{\mf{osp}(3|2)}$, for some integral weight $\mu$ of $\mf{osp}(3|2)$.
 \end{lem}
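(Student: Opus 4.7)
The plan is to combine the integrality conditions with good diagrams and parabolic induction from an $\mf{osp}(3|2)$-Levi.

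First I would read off $W_\la$ directly. The hypothesis $\langle\la,(2\delta)^\vee\rangle\in\frac{1}{2}+\Z$ yields $s_0\in W_\la$ by Proposition \ref{prop::IntWeylGp}, while $\gamma=-\vare_3\in Z(\la)$ forces $s_\gamma\in W_\la$. Since $W_\la^1\cong\Z_2$ by the standing hypothesis of Theorem \ref{Thm::1-intblocks}, the decomposition \eqref{eq::210} pins down $W_\la^1=\langle s_\gamma\rangle$ and $W_\la^0=\langle s_0\rangle$, so $W_\la=\langle s_0,s_\gamma\rangle\cong\Z_2\times\Z_2$.

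To bring the $\mf{osp}(3|2)$-Levi into play I would shift $\la$ via a chain of good diagrams
\begin{align*}
[A(\la)\mid -\vare_3]\xrightarrow{s_2}[s_2A(\la)\mid \vare_2]\xrightarrow{s_1}[s_1s_2A(\la)\mid \vare_1].
\end{align*}
Each step is good in the sense of \eqref{gooddiagram}, so Lemma \ref{lem::tequiv} produces an equivalence $\mc O_\la\cong\mc O_{\la'}$ of highest weight categories for $\la':=s_1s_2\la$. A direct computation gives $Z(\la')=\{\vare_1\}$ and $A(\la')=\{\delta-\vare_1,\delta+\vare_1\}$, while half-integrality of $\langle\la,(2\delta)^\vee\rangle$ is preserved because $s_1,s_2\in W_{G_2}$ fix $2\delta$. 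In particular every atypical root and the integrally short root now lie inside $S:=\{\delta-\vare_1,\vare_1\}\subset\Pi^2$, whose Levi $\mf l:=\mf l_S\cong\mf{osp}(3|2)$ is provided by Lemma \ref{lem::52}(3).

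Next I would pass to the $\mf b^2$-category $\mc O^{(2)}$, which coincides with $\mc O$ as an abelian category, and use Theorem \ref{thm::blocks} together with the odd-reflection rule to rewrite the labels of $\text{Irr}\,\mc O^{(2)}_{\la'}$ as $\mf b^2$-highest weights. A short verification will show that every such label differs from $\la'$ by an element of $\Z\delta+\Z\vare_1$, i.e.\ the root lattice of $\mf l$. This is exactly what is required for the parabolic induction functor
\begin{align*}
\text{Ind}_{\mf l+\mf b^2}^{\g}\colon (\mc O^{(2)})^{\mf l}_{\la'}\longrightarrow \mc O^{(2)}_{\la'}
\end{align*}
to send simples to simples; the standard adjunction plus Short Five Lemma argument, analogous to those appearing earlier in the proof of Theorem \ref{Thm::1-intblocks}, then upgrades it to an equivalence of highest weight categories. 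Because $W_{\la'}$ contains both $s_{2\delta}$ and $s_{\vare_1}$, which generate the full Weyl group of $\mf l_{\bar 0}\cong\sll\oplus\sll$, the image block takes the form $\mc O_\mu^{\mf{osp}(3|2)}$ for some integral weight $\mu$.

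The main obstacle will be the simplicity-preservation step for parabolic induction. Concretely, for each weight $\nu$ labelling a simple in $(\mc O^{(2)})^{\mf l}_{\la'}$ and each positive root $\alpha$ of $\g$ outside $\mf l$, I will need to verify that $\langle\nu,\alpha^\vee\rangle\not\in\N\setminus\{0\}$ when $\alpha$ is even and $(\nu,\alpha)\ne 0$ when $\alpha$ is isotropic. This is a finite root-by-root check which should follow from the half-integrality of $\langle\la',(2\delta)^\vee\rangle$ together with the non-integrality of $\la'$ on every short root of $G_2$ other than $\vare_1$.
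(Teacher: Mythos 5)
Your proposal follows the paper's route essentially step by step: first pin down $W_\la=\langle s_0,s_\gamma\rangle$ from the integrality hypotheses, then slide $Z(\la)=\{-\vare_3\}$ to $\{\vare_1\}$ via the good diagrams $s_2,s_1$ and Lemma \ref{lem::tequiv}, switch to the $\mf b^2$-category by odd reflection, and finish by parabolic induction from the $\mf{osp}(3|2)$-Levi with $S=\{\delta-\vare_1,\vare_1\}\subset\Pi^2$. This is the paper's own proof.

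One small but genuine error in the middle: your claim that $A(\la')=\{\delta-\vare_1,\delta+\vare_1\}$ is false. Since $(\la,\delta)=-2\langle\la,(2\delta)^\vee\rangle$ and $\langle\la,(2\delta)^\vee\rangle\in\hf+\Z$, we have $(\la,\delta)$ an odd integer, in particular nonzero; hence $(\la',\delta)\ne 0$, and at most one of $(\la',\delta\pm\vare_1)$ can vanish. In fact $A(\la')=\{\delta-c\vare_1\}$ is a single root (here $A(\la)=\{\delta+c\vare_3\}$, $c=\pm1$). The mistake is harmless for the final conclusion — both $\delta\pm\vare_1$ lie in the root system of $\mf l_S$, so your lattice $\Z\delta+\Z\vare_1$ and the simplicity-preservation check for isotropic roots $\delta\pm\vare_2,\delta\pm\vare_3$ outside the Levi go through unchanged — but you should correct the statement. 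You also flag and sketch the simples-to-simples verification for $\mathrm{Ind}_{\mf l+\mf b^2}^{\g}$ more explicitly than the paper does; the paper leans on the same ``standard adjunction plus Short Five Lemma'' argument cited earlier in Section~\ref{Sect::1intblocks}.
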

 \begin{proof} Under the hypothesis of the lemma we are in the case of \ref{item::442}(a). We have $ \alpha := \delta +c\vare_3 \in A(\la)$,  $c = \pm 1$,  $\beta :=\delta -c\vare_3$, and  $$\text{Irr}\mc O_\la =\{\la+ k\alpha, s_{0}\la +k\beta, s_{\gamma}\la+k\beta, s_{0}s_{\gamma}\la +k\alpha|~k\in \Z\}.$$
 	
 	We apply the following good diagrams to obtain an equivalence $\mc O_\la \cong \mc O_{s_1s_2\la}:$
 	\begin{align*}
 	&[\delta +c\vare_3|-\vare_3]\xrightarrow{s_2} [\delta - c\vare_2|\vare_2] \xrightarrow{s_1} [\delta -c\vare_1|\vare_1],
 	\end{align*} and the corresponding equivalence give rises to $\text{Irr}\mc O_{s_1s_2\la} = s_1s_2\text{Irr} \mc O_\la = W_{s_1s_2\la}(s_1s_2\la+k(\delta-c\vare_1))$ by Theorem \ref{thm::blocks}. %It is not hard to show that $s_1s_2\text{Irr} \mc O_\la\subset s_1s_2\la +\Z(2\delta)+\Z \vare_1 +\Z(\delta+\vare_1) +\Z(\delta -\vare_1)$.
 	 We now set $\mu :=s_1s_2 \la$.  %$$\text{Irr}\mc O_{s_1s_2\la} =\{s_1s_2\la+ k\alpha, s_1s_2 s_{2\delta}\la +k\beta, s_1s_2 s_{\gamma}\la+k\beta, s_1s_2 s_{2\delta}s_{\gamma}\la +k\alpha|~k\in \Z\}.$$
 	 	By the rule of changing highest weights under odd reflection we find that $\text{Irr}\mc O^{(2)}_{\mu} =\text{Irr}\mc O_{\mu}$. Therefore  $\mc O_{\mu} \cong \mc O^{(2)}_{\mu}$ via the identity functor. We now take the Levi subalgebra $\mf l :=\mf l_S  \cong \mf{osp}(3|2)$ with $S:=\{\delta -\vare_1,\vare_1\}\subset \Pi^{(2)}$. Now, the parabolic induction functor $\text{Ind}_{\mf l+\mf b^2}^{\mf g}: (\mc O^{(2)})^{\mf l+\mf b^2}_{\mu} \rightarrow \mc O^{(2)}_{\mu}$, sends simple objects to simple objects, is thus an equivalence.
 \end{proof}

 \begin{lem}  \label{lem::1_2}
 	Suppose that $(\gamma, \alpha) = \pm 2 $ with $\langle \la , (2\delta)^\vee \rangle \notin \frac{1}{2} +\Z$. Then $W_\la  \cong \Z_2$ and $\mc O_\la \cong \mc O_\mu^{\mf{osp}(3|2)}$, for some non-integral weight $\mu$ of $\mf{osp}(3|2)$.
 \end{lem}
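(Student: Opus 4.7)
The plan is to mirror the proof of Lemma~\ref{lem::1_1}, replacing the half-integral case of $\langle \la, (2\delta)^\vee \rangle$ by the non-half-integral one; the latter produces a non-integral block of $\mf{osp}(3|2)$ instead of its principal block. Under the hypothesis we are in the situation of \ref{item::442}(b): after applying Proposition~\ref{prop::JSF} to shift $\la$ by a multiple of $\alpha := \delta + c\vare_3$, we may assume $\langle \la, \vare_3^\vee \rangle = 0$, so that $A(\la) = \{\alpha, \beta\}$ with $\beta := \delta - c\vare_3$, and $\text{Irr}\,\mc O_\la = \{\la + k\alpha,\, \la + k\beta \mid k \in \Z\}$ by Theorem~\ref{thm::blocks}.

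First, I would apply the very same chain of good diagrams used in the proof of Lemma~\ref{lem::1_1}, now carrying the full set $A(\la)$ along:
\[
[\alpha, \beta \mid -\vare_3] \xrightarrow{s_2} [s_2 \alpha, s_2 \beta \mid \vare_2] \xrightarrow{s_1} [s_1 s_2 \alpha, s_1 s_2 \beta \mid \vare_1].
\]
By Lemma~\ref{lem::tequiv} this gives an equivalence of highest weight categories $\mc O_\la \cong \mc O_\mu$ with $\mu := s_1 s_2 \la$, and Theorem~\ref{thm::blocks} places $\text{Irr}\,\mc O_\mu$ inside $\mu + \Z(\delta - c\vare_1) + \Z(\delta + c\vare_1)$. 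Next, running the same odd-reflection argument as in Lemma~\ref{lem::1_1} to change the Borel from $\mf b$ to $\mf b^2$, I would check that no shift is induced on the atypical simple highest weights in $\mc O_\mu$, whence $\text{Irr}\,\mc O^{(2)}_\mu = \text{Irr}\,\mc O_\mu$ and $\mc O_\mu \cong \mc O^{(2)}_\mu$ via the identity functor. Taking the Levi $\mf l := \mf l_S \cong \mf{osp}(3|2)$ with $S = \{\delta - \vare_1, \vare_1\} \subset \Pi^2$ from Lemma~\ref{lem::52}(3), the simple highest weights in $\mc O^{(2)}_\mu$ all lie in $\mu + \Z S$, so the parabolic induction $\mathrm{Ind}_{\mf l + \mf b^2}^{\mf g}: (\mc O^{(2)})^{\mf l}_\mu \to \mc O^{(2)}_\mu$ preserves simples, and the usual adjunction plus Short Five Lemma argument upgrades it to an equivalence of highest weight categories.

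The decisive new ingredient over Lemma~\ref{lem::1_1} is the hypothesis $\langle \la, (2\delta)^\vee \rangle \notin \tfrac12 + \Z$: by Proposition~\ref{prop::IntWeylGp} this excludes $s_{2\delta}$ from $W_\la$, forcing $W_\la = \langle s_\gamma \rangle \cong \Z_2$, and since $\langle \mu, (2\delta)^\vee \rangle = \langle \la, (2\delta)^\vee \rangle$, the block of $\mf l \cong \mf{osp}(3|2)$ corresponding to $(\mc O^{(2)})^{\mf l}_\mu$ is labelled by a non-integral weight, as claimed. The step I expect to require the most care is verifying at each stage of the odd-reflection chain from $\mf b$ to $\mf b^2$ that the relevant isotropic simple root is orthogonal to every simple highest weight in the block, so that the passage from $\Pi$ to $\Pi^2$ acts trivially on $\text{Irr}\,\mc O_\mu$; this ultimately is just the atypicality condition already encoded in $A(\mu)$, but it is exactly where the proof could go wrong if any simple highest weight turned out to be typical along one of the intermediate isotropic simple roots.
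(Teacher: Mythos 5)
Your proposal follows the same route the paper intends: the paper's own proof of this lemma is the two-line remark ``we are in case \ref{item::442}(b); the proof is completed by the same argument as Lemma~\ref{lem::1_1}'', and you carry out exactly that argument (the $s_2,s_1$ chain of good diagrams to bring $Z$ to $\{\vare_1\}$, passage to $\mc O^{(2)}$ via odd reflections, parabolic induction from $\mf l_S\cong\mf{osp}(3|2)$), with the correct observations that the non-half-integrality of $\langle\la,(2\delta)^\vee\rangle$ both excludes $s_{2\delta}$ from $W_\la$ and makes the resulting $\mf{osp}(3|2)$ weight non-integral. One small caveat in your closing remark: with the $\rho$-shifted labelling used in the paper, an odd reflection at an isotropic simple root $\gamma$ leaves the label $\nu$ of a simple module unchanged precisely when $(\nu-\rho,\gamma)\neq 0$, i.e.\ when the unshifted highest weight is \emph{not} orthogonal to $\gamma$; it is orthogonality (atypicality along $\gamma$), not typicality, that shifts the label by $\gamma$, so the potential failure mode you flag should be stated the other way round — though the conclusion $\text{Irr}\,\mc O^{(2)}_\mu=\text{Irr}\,\mc O_\mu$ that you (and the paper) assert is what is actually needed and holds here.
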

 \begin{proof} Under the hypothesis of the lemma we are in the case \ref{item::442}(b) above and we have
 	$A(\la) = \{\alpha := \delta +\vare_3, \beta := \delta -\vare_3\}$ and $$\text{Irr}\mc O_\la =\{\la+ k\alpha, \la +k\beta|~k\in \Z\}.$$
 	
 	The proof can be completed by using similar argument as in the proof of Lemma \ref{lem::1_1}.
 \end{proof}

\begin{lem}
Suppose that $(\gamma, \alpha) =\pm 1$. Then $W_\la  \cong \Z_2$ and $\mc O_\la \cong \mc O_\pri^{ \gl(2|1)}.$
\end{lem}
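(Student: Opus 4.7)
My plan is to follow the strategy of Lemmas~\ref{lem::1_1} and~\ref{lem::1_2}: first I reduce, via a sequence of good diagrams in the sense of Lemma~\ref{lem::tequiv}, to a canonical weight whose atypical roots lie inside a convenient Levi root system, and then I realize the resulting block via parabolic induction from that Levi, which will turn out to be a $\gl(2|1)$-Levi.

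First I would record the shape of the block. Under $(\gamma,\alpha)=\pm 1$ with $\gamma=-\vare_3$ we are in case~(c) of Section~\ref{subsubsect:aux2}, so $A(\la)=\{\alpha,\beta\}$ with $\alpha=\delta+c\vare_i$ and $\beta=\delta-c\vare_j$ for some $c\in\{\pm 1\}$ and $\{i,j\}=\{1,2\}$, and $\text{Irr}\,\mc O_\la=\{\la+k\alpha,\la+k\beta\mid k\in\Z\}$. A short computation using $(\la,\alpha)=0$ together with $\langle\la,\vare_3^\vee\rangle=0$ shows that $\langle\la,(2\delta)^\vee\rangle\in\hf+\Z$ would force $\vare_i\in Z(\la)$, contradicting $Z(\la)=\{-\vare_3\}$; hence $W_\la^0=\{e\}$ and $W_\la=\langle s_{-\vare_3}\rangle\cong\Z_2$. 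Up to the symmetry $i\leftrightarrow j$ there are two essentially distinct sub-cases, and the good diagram $[\delta+\vare_2,\delta-\vare_1\,|\,-\vare_3]\xrightarrow{s_1}[\delta+\vare_1,\delta-\vare_2\,|\,-\vare_3]$ reduces the second to the first.

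Next I would transport $A(\la)=\{\delta+\vare_1,\delta-\vare_2\}$ with $Z(\la)=\{-\vare_3\}$ into the canonical form with $Z=\{\vare_1\}$ through
\[
[\delta+\vare_1,\delta-\vare_2\,|\,-\vare_3]\xrightarrow{s_2}[\delta-\vare_1,\delta+\vare_3\,|\,\vare_2]\xrightarrow{s_1}[\delta-\vare_2,\delta+\vare_3\,|\,\vare_1],
\]
the goodness of each arrow following from the simple verifications $\vare_1\notin\{-\vare_3\}$ and $\vare_2-\vare_1\notin\{\vare_2\}$. Writing $\la':=s_1s_2\la$, Lemma~\ref{lem::tequiv} delivers an equivalence of highest weight categories $\mc O_\la\cong\mc O_{\la'}$, with $A(\la')=\{\delta+\vare_3,\delta-\vare_2\}$ and $Z(\la')=\{\vare_1\}$.

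Finally, Lemma~\ref{lem::52}(2) identifies the Levi $\mf l:=\mf l_S$ for $S=\{\vare_1,\delta+\vare_3\}\subset\Pi$ with $\gl(2|1)$, and its root system $\{\pm\vare_1,\pm(\delta+\vare_3),\pm(\delta-\vare_2)\}$ contains the integral root $\vare_1$ together with both atypical roots of $A(\la')$. Thus every weight in $\text{Irr}\,\mc O_{\la'}$ lies in $\la'+\Z\vare_1+\Z(\delta+\vare_3)$, the root lattice of $\mf l$. The parabolic induction $\text{Ind}_{\mf l+\mf b}^{\mf g}\colon\mc O_{\la'}^{\mf l}\to\mc O_{\la'}$ then sends simples to simples, and the standard adjunction/length-induction/Short-Five-Lemma argument (as in the proofs of Lemmas~\ref{lem::1_1},~\ref{lem::1_2} and \cite[Proposition~3.6]{CMW13}) yields $\mc O_{\la'}^{\mf l}\cong\mc O_{\la'}$ as highest weight categories, after which the identification $\mc O_{\la'}^{\mf l}\cong\mc O_\pri^{\gl(2|1)}$ completes the argument. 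The only real obstacle I anticipate is the bookkeeping for the good diagrams and Levi-root-lattice containment; no new conceptual ingredient beyond those used in the preceding two lemmas is required.
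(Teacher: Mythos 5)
Your proposal is correct and follows essentially the same route as the paper: identify case (c) of Section~\ref{subsubsect:aux2}, reduce via good diagrams so that $Z$ becomes $\{\vare_1\}$ and both atypical roots lie in the root system of the Levi $\mf l_S\cong\gl(2|1)$ for $S=\{\vare_1,\delta+\vare_3\}\subset\Pi$, then conclude via the parabolic induction equivalence. The only (minor) divergence is that you reduce the $c=-1$ sub-case to the $c=1$ sub-case by a single application of $s_1$, observing that $s_1$ fixes $-\vare_3$, whereas the paper handles the $c=-1$ sub-case with the longer chain $s_1,s_2,s_1$ — your shortcut is valid and a touch cleaner. You also supply an explicit check that $W_\la^0=\{e\}$ (otherwise $\vare_i$ would lie in $Z(\la)$, contradicting $Z(\la)=\{-\vare_3\}$), which the paper leaves implicit in this lemma; that is a harmless and correct addition.
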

\begin{proof}
Under the hypothesis of the lemma we are in the case of \ref{item::442}(c), and so explained there we can assume that $\langle \la, \gamma^\vee\rangle =0$ and that 	$A(\la) = \{  \delta+ c\vare_1,   \delta -c\vare_2\}$, for $c=\pm 1$.
 	
We first consider the case $A(\la) = \{\alpha:= \delta+\vare_1,~\beta:= \delta -\vare_2\}$.
In this case, we have the following composition of good diagrams:
\begin{align*}
&[\delta+\vare_1, ~\delta-\vare_2| -\vare_3]  \xrightarrow{s_2} [\delta-\vare_1, ~\delta+\vare_3|\vare_2]  \xrightarrow{s_1}[\delta-\vare_2, ~\delta+\vare_3|\vare_1].
\end{align*}
By Lemma \ref{lem::tequiv}, we have $\mc O_\la \cong \mc O_{s_1s_2\la}$ and $A(s_1s_2\la) = \{\delta - \vare_2, \delta +\vare_3\}$ with $\text{Irr}\mc O_{s_1s_2\la} =\{s_1s_2\la +k(\delta- \vare_2), s_1s_2\la +k(\delta +\vare_3)|~k\in \Z\}$.  Now,  we take the Levi subalgebra $\mf l:=\mf l_S   \cong \gl(2|1)$ with  $S:=\{\vare_1,\delta +\vare_3\}\subset \Pi$. Then the parabolic induction functor $\text{Ind}_{\mf l+\mf b}^{\mf g}: \mc O^{\mf l}_\la \rightarrow \mc O_\la$, sends simple objects to simple objects, is thus an equivalence. We note that $\mc O^{\mf l}_\la\cong \mc O^{\mf l}_\pri$.

 	Next, we  consider the case $A(\la) = \{\alpha:= \delta-\vare_1,~\beta:= \delta +\vare_2\}$. Then $\text{Irr}\mc O_\la =\{\la +k\alpha, \la +k\beta|~k\in \Z\}$. In this case, we  apply the following  sequence of good diagrams
 	\begin{align*}
 	&[\delta-\vare_1, ~\delta+\vare_2|-\vare_3]  \xrightarrow{s_1} [\delta-\vare_2, ~\delta+\vare_1|-\vare_3]  \xrightarrow{s_2}[\delta+\vare_3, ~\delta-\vare_1|\vare_2]
 	\xrightarrow{s_1}[\delta+\vare_3, ~\delta-\vare_2|\vare_1].
 	\end{align*}  We use Lemma \ref{lem::tequiv} to reduce  the proof of this case to the previous case.
 \end{proof}

 \begin{proof}[Proof of Theorem \ref{Thm::1-intblocks}]
 	The proof of Theorem \ref{Thm::1-intblocks} now follows from \eqref{eq::of1}--\eqref{eq::of3} and the lemmas in Section \ref{Sect::1intblocks}.
 \end{proof}

%%%%%%%%%%%%%%%%%%%%%%%%%%%%%%%%%%%%%%%%%%%%%%%%%%%%%
%%%%%%%%%%%%%%%%%%%%%%%%%%%%%%%%%%%%%%%%%%%%%%%%%%%%%

\section{Character formulas in $\mc O_V$} \label{sect::OVcase}

\subsection{Blocks in $\mc O_V$  cases}

Recall that $V=\Z_2\times \Z_2$. In this section, we give a description of blocks $\mc O_\la$ for atypical non-integral weights $\la\in \h^\ast$ satisfying $W_\la^1 \cong V$.

In   Section \ref{subsect::Vblocksspecial} we  describe $\mc O_\la$ in the case when $Z(\la)=\{-\vare_3,~\vare_2-\vare_1\}$ then   show in the Section \ref{subsect::Vblocksreduction} that all blocks are equivalent to such blocks.

\subsubsection{The case $Z(\la) =\{-\vare_3,~ \vare_2-\vare_1\}$} \label{subsect::Vblocksspecial}
We suppose in this section that $\la=d\delta+a\omega_1+b\omega_2 \in \h^\ast$ with  $Z(\la) =\{-\vare_3,~\vare_2-\vare_1\}$. By   \eqref{eq::Weylgpcoroot} we have $a,~3a+2b\in \Z$ and $b,~a+b,~2a+b,~3a+b \not \in \Z$. This implies that $b={k}/{2}$, where $k$ is odd. In particular, we can rewrite $$\la = [d~|~{k}/{4},~ {3a}/{2}+{k}/{4},~-{3a}/{2}-{k}/{2}].$$

If $\la$ is typical, then $\mc O_\la \cong \mc O_\la^{\mf{sl}(2)\oplus G_2}$ and hence we have $\mc O_\la\cong  \mc O^{\mf{sl}(2)\oplus\mf{sl}(2)}_0$ or {{$\mc O_\la\cong\mc O^{\mf{sl}(2)\oplus\mf{sl}(2)\oplus\mf{sl}(2)}_0$.}}

We suppose now that $\la$  is atypical. By our assumption  we have $W_\la \cong V$ or $\Z_2 \times V$.   It follows from \cite[Proposition 3.2]{CW18} that
\begin{align} \label{eq::eqV}
&d= \pm{k}/{4},~\pm\left({3a}/{2}+{k}/{4}\right), \text{ or } \pm\left({3a}/{2}+{k}/{2}\right).
\end{align}

We define the following weights:
\begin{itemize}
	\item[(a)] For  $\ell \in 3\Z$, we define
	\begin{align}
	&\la_{[\ell]}:= [-\ell - {1}/{2}~|~{1}/{4},~\ell+{1}/{4},~ -\ell -{1}/{2}].
	\end{align}
	\item[(b)]
	For  $\ell \in {1}/{4}+{\Z}/{2}$, we define
	\begin{align}
	&\mu_{[\ell]}:= [\ell~|~\ell,~\ell,~ -2\ell].
	\end{align}
		\item[(c)]
	For  $\ell \in 3\Z+1$, we define
	\begin{align}
	&\nu_{[\ell]}:= [\ell~|~{1}/{4},~-\ell-{1}/{4},~ \ell].
	\end{align}
\end{itemize}

\vskip 0.5cm

We now give the classification of indecomposable  summand of $\mc O_{V}$ as follows.
\begin{thm} \label{thm::51}Let $\la$ be  atypical with $Z(\la)=\{-\epsilon_3,\epsilon_2-\epsilon_1\}$.	We note that $s_{-\vare_3} =s_2s_1s_2s_1s_2$.

	 \vskip 0.2cm
	 \begin{enumerate}
	 	\item[(I)]	The following are equivalent.
	 	\begin{itemize}
	 		\item[(1)] $W_\la = \langle s_0, s_{1}, s_{-\vare_3}\rangle \cong \Z_2\times \Z_2\times \Z_2$.
	 		\item[(2)] $\langle\la,(2\delta)^\vee\rangle \in \frac{1}{2}+\Z$ and $\delta+c\vare_3 \in A(\la)$,   $c= \pm 1.$
	 		\item[(3)] $\la\in \emph{Irr}\mc O_{\la_{[\ell]}},$ for some $\ell \in 3\Z$.
	 	\end{itemize}
 	\item[(II)]	The following are equivalent.
 	\begin{itemize}
 		\item[(4)] $W_\la=\langle  s_{1}, s_{-\vare_3}\rangle \cong \Z_2\times \Z_2$ with $(\alpha,\gamma)$  odd, for any $\alpha \in A(\la)$ and $\gamma\in Z(\la)$.
 		\item[(5)]  $\delta+c\vare_i \in A(\la)$,  $c= \pm 1$, $i=1,2$.
 		\item[(6)] $\la\in \emph{Irr}\mc O_{\mu_{[\ell]}},$ for some $\ell \in  \frac{1}{4}+\frac{\Z}{2}$.
 	\end{itemize}
 \item [(III)]	The following are equivalent.
 \begin{itemize}
 	\item[(7)] $W_\la =\langle  s_{1}, s_{-\vare_3}\rangle \cong \Z_2\times \Z_2$ with $(\alpha,\gamma)$  even, for any $\alpha \in A(\la)$ and $\gamma\in Z(\la)$.
 	\item[(8)] $\langle\la,(2\delta)^\vee\rangle \notin \frac{1}{2}+\Z$ and $\delta+c\vare_3 \in A(\la)$,  $c= \pm 1.$
 	\item[(9)] $\la\in \emph{Irr}\mc O_{\nu_{[\ell]}},$  $\ell \in \Z$.
 \end{itemize}
	 \end{enumerate}
\end{thm}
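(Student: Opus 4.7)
The plan is to decouple Theorem~\ref{thm::51} into (i) a parity analysis establishing $(1)\Leftrightarrow(2)$, $(4)\Leftrightarrow(5)$, $(7)\Leftrightarrow(8)$, and (ii) a Casimir/integrality comparison against each family of representatives, establishing $(2)\Leftrightarrow(3)$, $(5)\Leftrightarrow(6)$, $(8)\Leftrightarrow(9)$.

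For (i), I would begin by extracting, from the hypothesis $Z(\la)=\{-\vare_3,\vare_2-\vare_1\}$ together with \eqref{eq::Weylgpcoroot}, that writing $\la=[d\mid x,y,z]$ forces $a\in\Z$ and $b=k_0/2$ for some odd integer $k_0$. A direct substitution then shows $x,y\in\tfrac{1}{4}+\tfrac{1}{2}\Z$ always, while $z\in\tfrac{1}{2}+\Z$ if $a$ is even and $z\in\Z$ if $a$ is odd. Since $s_0\in W_\la$ iff $d\in\tfrac{1}{2}+\Z$, and atypicality $\delta+c\vare_i\in A(\la)$ is equivalent to $d=cx$, $d=cy$, or $d=cz$ for $i=1,2,3$ respectively (from the observation $(\delta+c\vare_i,\la)=-2d+2c\cdot(\text{$i$-th slot})$), the three mutually exclusive scenarios $d\in\tfrac{1}{2}+\Z$, $d\in\tfrac{1}{4}+\tfrac{1}{2}\Z$, $d\in\Z$ match precisely the atypicality types in (2), (5), (8). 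The remaining parity assertions in (4) and (7) are checked by evaluating $(\delta+c\vare_j,\gamma)$ against $\gamma\in\{\vare_2-\vare_1,-\vare_3\}$, which is odd for $j\in\{1,2\}$ and even for $j=3$.

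For (ii), I would standardize $\la$ via $W$-action so that its first $\vare$-slot equals $d$, as in the proof of Theorem~\ref{thm::3}, and read off $k:=Y+d/2$ so that the element $\Omega$ from Section~\ref{subsubsect::Cas} acts by $-4k^2$. In Case (I) the constraint $y-x=3a/2\in 3\Z$ (from $a$ even) combined with $d\in\tfrac{1}{2}+\Z$ yields $k=3m/2$ for some $m\in\Z$; choosing $\ell=3m\in 3\Z$, the weight $\la_{[\ell]}$ has matching $\Omega$-eigenvalue $-\ell^2=-9m^2$. In Case (II), $k=3u/2$ for some $u\in\tfrac{1}{4}+\tfrac{1}{2}\Z$, and $\ell=u$ gives $\mu_{[\ell]}$ with $\Omega$-eigenvalue $-9\ell^2$. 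In Case (III) one finds $k=3a/4$ (with $a$ odd), and $\ell=(3a-1)/2\in 3\Z+1$ gives $\nu_{[\ell]}$ with $\Omega$-eigenvalue $-(\ell+\tfrac{1}{2})^2$. For each case a short direct computation confirms that $\la$ minus the chosen representative lies in $\Z\delta+\Z\vare_1+\Z\vare_2=\Z\Phi$. By Theorem~\ref{thm::3} (atypical weights with matching central character are $\sim$-related) and Theorem~\ref{thm::blocks} ($\mathrm{Irr}\mc O_\la=\{W_\la(\la+k\alpha)\mid k\in\Z\}$), this places $\la$ in the block of the chosen representative. Conversely, every element of $\mathrm{Irr}\mc O_{\la_{[\ell]}}$ (resp.\ $\mu_{[\ell]}$, $\nu_{[\ell]}$) is obtained from the representative by a $W_\la$-action and an integer shift along an atypical root, both of which preserve the prescribed atypicality type and $d$-parity; this gives the reverse implications.

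The principal obstacle is the arithmetic bookkeeping in step (ii): tracking which denominators ($\Z$, $\tfrac{1}{2}\Z$, $\tfrac{1}{4}\Z$) each of $d,x,y,z$ and the standardized $k$ inhabits, and choosing the sign of $\ell$ so that not only do Casimirs agree but also the integer-translation condition $\la-(\text{representative})\in\Z\Phi$ holds. For example, in Case (III) both $\ell=(3a-1)/2$ and $\ell=-(3a+1)/2$ lie in $3\Z+1$ when $a$ is odd, but typically only one of these sign choices makes $\la-\nu_{[\ell]}$ integral; isolating the correct choice is the main technical hurdle.
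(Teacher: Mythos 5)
Your outline is correct and, at bottom, follows the same route as the paper: everything reduces to explicit computations with \eqref{eq::Weylgpcoroot}, the central character (Theorem~\ref{thm::3}), and the block-linkage description (Theorem~\ref{thm::blocks}). The two presentational differences are (a) you factor out the parity analysis to establish $(1)\Leftrightarrow(2)$, $(4)\Leftrightarrow(5)$, $(7)\Leftrightarrow(8)$ before touching the blocks at all — the paper instead proves $(1)\Leftrightarrow(3)$, $(4)\Leftrightarrow(6)$, $(7)\Leftrightarrow(9)$ first and reads off the equivalence with $(2)$, $(5)$, $(8)$ afterwards — and (b) you detect the correct $\ell$ by matching the $\Omega$-eigenvalue $-4k^2$, then separately check integrality, whereas the paper directly exhibits the integer relation (e.g., $\la=\la_{[\ell]}+\tfrac{1-k}{2}(\delta+\vare_3)$ or $\la=\mu_{[\ell]}+a(\delta+\vare_2)$), which checks the central character and the $\Z\Phi$-shift in a single stroke. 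Both routes work; yours is slightly longer because the Casimir determines $\ell$ only up to the symmetry $\ell\leftrightarrow$ (the other solution of the quadratic), so you then have to argue integrality. Worth noting: the ``principal obstacle'' you describe — choosing the sign of $\ell$ so that $\la-\nu_{[\ell]}\in\Z\Phi$ — is in fact a non-issue. In Case~(III) both candidates $\ell=\tfrac{3a-1}{2}$ and $\ell'=-\tfrac{3a+1}{2}$ lie in $3\Z+1$ when $a$ is odd, they satisfy $\ell'=-\ell-1$, and one checks directly that $\nu_{[\ell]}-\nu_{[\ell']}\in\Z\Phi$ with equal central character, so $\mathcal O_{\nu_{[\ell]}}=\mathcal O_{\nu_{[\ell']}}$; analogously $\la_{[\ell]}$ and $\la_{[-\ell]}$ (and $\mu_{[\ell]}$, $\mu_{[-\ell]}$) give the same block. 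So either sign choice lands you in the correct block, and the bookkeeping you anticipated as a hurdle evaporates.
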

\begin{proof}
	We first consider the case that $W_\la \cong \Z_2\times \Z_2\times \Z_2$. We claim that $(1)$ and $(3)$ are equivalent. Note that we have $(3) \Rightarrow (1)$. It remains to show  $(1) \Rightarrow (3)$. To see this, we note that  $\la = [d|~\frac{k}{4},~\frac{3a}{2}+\frac{k}{4},~-\frac{3a}{2}-\frac{k}{2}]$ with $d\in \frac{1}{2}+\Z$. Since $k$ is odd, we have $d\neq \frac{k}{4}$. Since $3a+b\not\in\Z$, $\frac{3a}{2}+\frac{k}{4}\not \in \frac{1}{2}+\Z.$ Therefore, $-\frac{3a}{2}-\frac{k}{2} \in \frac{1}{2}+\Z$. This means that $a$ is an even integer. We set $\ell : = \frac{3a}{2},$ which is an integer and lies in $3\Z$. Therefore, applying $s_0$ if necessarily, we may assume that
	\begin{align}
	&\la = [-\ell -{k}/{2}~|~{k}/{4},~\ell+{k}/{4},~-\ell -{k}/{2}].
	\end{align}
	We note that $\la =\la_{[\ell]} +({1-k}/{2})[1|~ -{1}/{2},~-{1}/{2},~1]= \la_{[\ell]} +({1-k}/{2})(\delta+\epsilon_3).$ By Theorem \ref{thm::blocks} we have thus proved $(1)\Leftrightarrow (3)$ and $(1)\Rightarrow (2)$ as well. Also, it is clear  that $(2)\Rightarrow (1)$ since we have assumed that $W_\la^1\cong V$.
	
	We next consider  the case that $W_\la \cong  V$. Choose arbitrary $\alpha\in A(\la)$, $\gamma\in Z(\la)$ and note that
	$$(\gamma,\alpha)= \begin{cases}
	\text{ even},& \text { if $\alpha = \delta\pm \vare_3$;}\\
		\text{ odd},& \text { if $\alpha = \delta\pm \vare_i$ with $i=1,2.$}
	\end{cases}$$

	We first suppose that $(\gamma, \alpha)$ is odd. Let us recall $d,a,b$ from \eqref{eq::wtid}. By \cite[Propposition 3.2]{CW18}, we have either $d=\pm \frac{b}{2}$, or $d=\pm \frac{3a+b}{2}$. Since $b\notin \Z$ and $3a+b\notin \Z$, we have $d\notin \frac{\Z}{2}$. This proves $(5)\Leftrightarrow (4).$
	
We now prove $(4)\Leftrightarrow (6)$. Observe that we have $(6)\Rightarrow (4)$. It remains to prove $(5)\Rightarrow(6)$. Applying the action of $s_{-\vare_3}$ and Theorem \ref{thm::blocks}  if necessary, we may assume that $c=1$. If $d= \frac{k}{4}$ then we note that $$\la +a (\delta+\vare_1) = [{k}/{4}+a|~{k}/{4}+a,~{k}/{4}+a,~-2a -{k}/{2}] =[\ell|~\ell,~\ell,-2\ell] = \mu_{[\ell]},$$ where $\ell := {k}/{4}+a\in {1}/{4}+\frac{\Z}{2}$.

If $d=\frac{3a}{2}+\frac{k}{4}$, then we note that
$$ \la = \mu_{[\ell]} +a(\delta +\vare_2),$$
where $\ell :=\frac{k}{4}+\frac{a}{2}\in \frac{1}{4}+\frac{\Z}{2}$. This proves $(5)\Rightarrow (6)$.

Finally suppose that  $(\gamma, \alpha)$ is even. We may observe that  $(7)$ and $(8)$ are equivalent. It remains to show that they are equivalent to (9).

We first  note that $(9)\Rightarrow(8)$. Conversely, suppose (8) holds.  Applying the action of $s_{-\vare_3}$ and Theorem \ref{thm::blocks} if necessary, we may assume that $c=1$.  Then we have
$$ \la =[\ell|~{1}/{4},~-\ell -{1}/{4},~\ell] + ({1-k}/{2})[1|~-{1}/{2},~-{1}/{2},~1] =  \nu_{[\ell]} +({1-k}/{2})(\delta+\vare_3).
$$ where $\ell =-{3a+1}/{2}$. Since $-{3a+k}/{2}\notin \frac{1}{2}+\Z$, $a$ is odd and we have $\ell \in 1+3\Z$.  This proves $(8)\Rightarrow(9)$. This concludes the proof.
\end{proof}

\subsubsection{Reduction method} \label{subsect::Vblocksreduction}
In this section, we assume that $\la\in \h^\ast$ is an atypical weight such that $W_\la^1\cong V$. Recall the possibilities for $W_\la$ and $Z(\la)$ in
\eqref{eq::38}--\eqref{eq::310}.
Let $\alpha \in A(\la)$ and $\gamma \in Z(\la)$. We define  $$\la' = \begin{cases} s_2\la , &\mbox{ for  $\vare_2 \in Z(\la)$;}\\
s_2s_1 \la,&\mbox{ for $\vare_1\in Z(\la)$.}
\end{cases}$$
The following corollary reduces all cases to the case when $Z(\la) =\{-\vare_3, \vare_2-\vare_1\}$.

\begin{cor}   Let $\la\in\h^\ast$ be atypical with $W_\la^1\cong V$, $\alpha\in A(\la)$ and $\gamma\in Z(\la)$.
	\begin{itemize}
		\item[(1)] Suppose $(\gamma, \eta)$ is odd. Then $\mc O_\la$ is equivalent to the block in Theorem \ref{thm::51}(II) as a highest weight category.
		\item[(2)] Suppose $(\gamma, \eta)$ is even.
\begin{itemize}
\item[(i)] If $\langle\la,(2\delta)^\vee\rangle\not\in\hf+\Z$, then $\mc O_\la$ is equivalent to the block in Theorem \ref{thm::51}(III) as a highest weight category.
\item[(ii)] If $\langle\la,(2\delta)^\vee\rangle\in\hf+\Z$, then $\mc O_\la$ is equivalent to the block in Theorem \ref{thm::51}(I) as a highest weight category.
\end{itemize}
		%\item[(3).] If $(\gamma, \eta)$ is even with $\langle\la, (2\delta)^\vee \rangle\in \frac{\Z}{2}.$ Then there exists $\mu\in \h^\ast$ with $\mc O_\mu =\mc O_{\la'}$ and $Z(\mu) =\{-\vare_3, \vare_2-\vare_1\}$ such that   $\alpha= \delta+\vare_3 \in A(\mu)$.
	\end{itemize}
\end{cor}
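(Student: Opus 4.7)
The plan is to reduce the general situation to the one treated in Theorem~\ref{thm::51} by twisting with Weyl group elements and then to invoke that theorem directly. By Theorem~\ref{thm::desnonblocks}(3) the three possibilities for $W_\la^1 \cong V$ are $\langle s_{-\vare_3}, s_{\vare_2-\vare_1}\rangle$, $\langle s_{\vare_1}, s_{\vare_2-\vare_3}\rangle$, and $\langle s_{\vare_2}, s_{\vare_1-\vare_3}\rangle$, that is, $Z(\la)$ equals $\{-\vare_3, \vare_2-\vare_1\}$, $\{\vare_1, \vare_2-\vare_3\}$, or $\{\vare_2, \vare_1-\vare_3\}$. The first case is already the setting of Theorem~\ref{thm::51}, so I need to handle only the remaining two, which is precisely what the element $\la'$ in the statement is designed for.

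First, I will verify that the diagrams
\[
[A(\la)|\{\vare_2, \vare_1-\vare_3\}] \xrightarrow{s_2} [s_2A(\la)|\{-\vare_3, \vare_2-\vare_1\}]
\]
and
\[
[A(\la)|\{\vare_1, \vare_2-\vare_3\}] \xrightarrow{s_1} [s_1A(\la)|\{\vare_2, \vare_1-\vare_3\}] \xrightarrow{s_2} [s_2s_1A(\la)|\{-\vare_3, \vare_2-\vare_1\}]
\]
are good in the sense of \eqref{gooddiagram}: $\alpha_2 = \vare_1$ lies in neither $\{\vare_2, \vare_1-\vare_3\}$ nor $\{-\vare_3, \vare_2-\vare_1\}$, and $\alpha_1 = \vare_2-\vare_1$ lies in neither $\{\vare_1, \vare_2-\vare_3\}$ nor $\{\vare_2, \vare_1-\vare_3\}$. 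The $W_{G_2}$-action on the positive roots of $G_2$ then gives $s_2(\vare_2) = -\vare_3$, $s_2(\vare_1-\vare_3) = \vare_2-\vare_1$, and similarly in the second diagram $s_1(\vare_1) = \vare_2$, $s_1(\vare_2-\vare_3) = \vare_1-\vare_3$, so that indeed $Z(\la') = \{-\vare_3, \vare_2-\vare_1\}$. By Lemma~\ref{lem::tequiv} the composition of twisting functors gives an equivalence of highest weight categories $\mc O_\la \cong \mc O_{\la'}$.

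Second, I must track the two invariants that distinguish (I), (II), (III) of Theorem~\ref{thm::51} under this twist. Since $s_1, s_2$ fix $2\delta$, we have $\langle \la', (2\delta)^\vee\rangle = \langle \la, (2\delta)^\vee\rangle$, so the condition $\langle\la,(2\delta)^\vee\rangle \in \hf+\Z$ passes unchanged to $\la'$. Likewise, using $A(\la') \in \{s_2 A(\la), s_2 s_1 A(\la)\}$ together with \eqref{eq::of1}--\eqref{eq::of3}, the parity of $(\gamma, \alpha)$ for $\alpha \in A(\la)$, $\gamma \in Z(\la)$ agrees with the parity of $(s_i\gamma, s_i\alpha)$ for $s_i\gamma \in Z(\la')$, $s_i\alpha \in A(\la')$ (with $s_i$ replaced by $s_2 s_1$ in the second case). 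Hence oddness or evenness of $(\gamma,\eta)$ is preserved.

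Finally, I combine these two observations with Theorem~\ref{thm::51}: if $(\gamma,\eta)$ is odd we land in part~(II) of that theorem; if $(\gamma,\eta)$ is even and $\langle\la,(2\delta)^\vee\rangle \notin \hf+\Z$ we land in part~(III); and if $(\gamma,\eta)$ is even and $\langle\la,(2\delta)^\vee\rangle \in \hf+\Z$ we land in part~(I). This yields the three assertions of the corollary. The main (and essentially only) potential obstacle is verifying that the twists $s_2$ and $s_2 s_1$ truly preserve the parity data and the condition on $\langle \la,(2\delta)^\vee\rangle$; both are immediate once one unpacks \eqref{eq::of1}--\eqref{eq::of3} and notes that $W_{G_2}$ commutes with the Casimir-type pairing against $2\delta$.
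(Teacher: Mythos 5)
Your proof is correct and follows essentially the same route as the paper: reduce to the setting of Theorem~\ref{thm::51} via the element $\la'$ using good diagrams and Lemma~\ref{lem::tequiv}, then observe that the pairing $(\gamma,\alpha)$ (and the value $\langle\la,(2\delta)^\vee\rangle$) is preserved under $s_1,s_2\in W_{G_2}$. Your write-up is slightly more careful than the paper's in spelling out the goodness checks and the invariance of the two distinguishing conditions, but the substance is the same.
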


\begin{proof}
We first prove that $\mc O_\la\cong \mc O_{\la'}$ as highest weight categories.
	
	Assume that $\vare_2\in Z(\la)$. Then the good diagram $[A(\la)|\vare_2, \vare_{1}-\vare_3] \xrightarrow{s_2} [A(\la')|-\vare_3, \vare_2-\vare_1]$ implies that $T_{s_2}: \mc O_\la \rightarrow \mc O_{\la'}$ is an equivalence by Lemma \ref{lem::tequiv}.
	
	Assume that $\vare_1\in Z(\la)$. Again,  the composition of good diagrams $$[A(\la)|\vare_1, \vare_2-\vare_3] \xrightarrow{s_1} [A(s_1\la)|\vare_2, \vare_1-\vare_3]\xrightarrow{s_2} [A(\la')|-\vare_3, \vare_2-\vare_1],$$ implies that $T_{s_2}T_{s_1}: \mc O_\la \rightarrow \mc O_{\la'}$ is an equivalence by Lemma \ref{lem::tequiv}.

Thus, we have reduced all cases with $W_{\la'}\cong V$ to the cases in Theorem \ref{thm::51}. Now we make the simple observation that
\begin{align*}
(\gamma,\alpha)=(s_i\gamma,s_i\alpha), \quad i=1,2,
\end{align*}
and the corollary follows.
\end{proof}

\subsection{Tilting characters}

\subsubsection{Tilting characters in $\mc O_{\lambda_{[\ell]}}$}

Fix a number $k\in\mathbb{Z}$ such that $k\leq\ell$. We denote by ${}_\ell\la_{k}$ the anti-dominant weight in $W_{\la_{[\ell]}}([-\ell-\frac{1}{2}+k~|~\frac{1}{4}-\frac{k}{2},~\ell+\frac{1}{4}-\frac{k}{2},~ -\ell -\frac{1}{2}+k])$. Note that $_\ell\la_{k}=_\ell\la_{2\ell-k+1}$ by definition. Hence to calculate the characters below we only need to consider the case $k\le \ell$.

Set
${}_\ell\la_{k}^1=s_1({}_\ell\la_{k})$, ${}_\ell\la_{k}^2=s_{\epsilon_3}({}_\ell\la_{k})$ and ${}_\ell\la_{k}^{12}=s_1s_{\epsilon_3}({}_\ell\la_{k})$. Furthermore, set ${}_\ell\la_{k}^{\imath+}=s_0({}_\ell\la_{k}^\imath), \imath=\emptyset,1,2,12$.

%\begin{lem} We have
%$$A({}_\ell\la_{k})=A({}_\ell\la_{k}^1)=A({}_k\la_{[\ell]}^{2+})
%=A({}_\ell\la_{k}^{12+})=\{\delta-\epsilon_3\}$$ and
%$$A({}_\ell\la_{k}^+)=A({}_\ell\la_{k}^{1+})=A({}_\ell\la_{k}^{2})
%=A({}_\ell\la_{k}^{12})=\{\delta+\epsilon_3\}.$$
%\end{lem}

All tilting characters $T_\lambda$ in the following Theorem \ref{thm:6.3} and Theorem \ref{thm:6.4} can be obtained by applying the translation functor, associated with tensoring with the adjoint module, on $T_{\lambda-2\delta}=M_{\lambda-2\delta}$.

\begin{thm}\label{thm:6.3}
Assume $\ell\in3\mathbb{Z}$ with $\ell\not=0$ and $\ell\geq k\in\mathbb{Z}$.
\begin{itemize}
  \item[(1)]
If $k<\ell$, then
\begin{align*}
T_{{}_\ell\la_{k}}=&M_{{}_\ell\la_{k}}+M_{{}_\ell\la_{k-1}},\\
T_{{}_\ell\la_{k}^1}=&M_{{}_\ell\la_{k}^1}+M_{{}_\ell\la_{k}}+M_{{}_\ell\la_{k-1}^1}+M_{{}_\ell\la_{k-1}},\\
T_{{}_\ell\la_{k}^2}=&M_{{}_\ell\la_{k}^2}+M_{{}_\ell\la_{k}}+M_{{}_\ell\la_{k-1}^2}+M_{{}_\ell\la_{k-1}},\\
T_{{}_\ell\la_{k}^{12}}=&M_{{}_\ell\la_{k}^{12}}+M_{{}_\ell\la_{k}^1}+M_{{}_\ell\la_{k}^{2}}+M_{{}_\ell\la_{k}}+M_{{}_\ell\la_{k-1}^{12}}+M_{{}_\ell\la_{k-1}^1}+M_{{}_\ell\la_{k-1}^2}+M_{{}_\ell\la_{k-1}},\\
T_{{}_\ell\la_{k}^+}=&M_{{}_\ell\la_{k}^+}+M_{{}_\ell\la_{k}}+M_{{}_\ell\la_{k+1}^+}+M_{{}_\ell\la_{k+1}},\\
T_{{}_\ell\la_{k}^{1+}}=&M_{{}_\ell\la_{k}^{1+}}+M_{{}_\ell\la_{k}^1}+M_{{}_\ell\la_{k}^+}+M_{{}_\ell\la_{k}}+M_{{}_\ell\la_{k+1}^{1+}}+M_{{}_\ell\la_{k+1}^1}+M_{{}_\ell\la_{k+1}^{+}}+M_{{}_\ell\la_{k+1}},\\
T_{{}_\ell\la_{k}^{2+}}=&M_{{}_\ell\la_{k}^{2+}}+M_{{}_\ell\la_{k}^2}+M_{{}_\ell\la_{k}^+}+M_{{}_\ell\la_{k}}+M_{{}_\ell\la_{k+1}^{2+}}+M_{{}_\ell\la_{k+1}^2}+M_{{}_\ell\la_{k+1}^{+}}+M_{{}_\ell\la_{k+1}},\\
T_{{}_\ell\la_{k}^{12+}}=&M_{{}_\ell\la_{k}^{12+}}+M_{{}_\ell\la_{k}^{12}}+M_{{}_\ell\la_{k}^{1+}}+M_{{}_\ell\la_{k}^{2+}}+M_{{}_\ell\la_{k}^{1}}+M_{{}_\ell\la_{k}^{2}}+M_{{}_\ell\la_{k}^{+}}+M_{{}_\ell\la_{k}}\\
&+M_{{}_\ell\la_{k+1}^{12+}}+M_{{}_\ell\la_{k+1}^{12}}+M_{{}_\ell\la_{k+1}^{1+}}+M_{{}_\ell\la_{k+1}^{2+}}+M_{{}_\ell\la_{k+1}^1}+M_{{}_\ell\la_{k+1}^2}+M_{{}_\ell\la_{k+1}^+}+M_{{}_\ell\la_{k+1}}.\\
\end{align*}
\item[(2)] If $k=\ell$, then
\begin{align*}
 T_{{}_\ell\la_{\ell}}=&M_{{}_\ell\la_{\ell}}+M_{{}_\ell\la_{\ell-1}},\\
 T_{{}_\ell\la_{\ell}^1}=&M_{{}_\ell\la_{\ell}^1}+M_{{}_\ell\la_{\ell}}+M_{{}_\ell\la_{\ell-1}^1}+M_{{}_\ell\la_{\ell-1}},\\
T_{{}_\ell\la_{\ell}^2}=&M_{{}_\ell\la_{\ell}^2}+M_{{}_\ell\la_{\ell}}+M_{{}_\ell\la_{\ell-1}^2}+M_{{}_\ell\la_{\ell-1}},\\
T_{{}_\ell\la_{\ell}^{12}}=&M_{{}_\ell\la_{\ell}^{12}}+M_{{}_\ell\la_{\ell}^1}+M_{{}_\ell\la_{\ell}^{2}}+M_{{}_\ell\la_{\ell}}+M_{{}_\ell\la_{\ell-1}^{12}}+M_{{}_\ell\la_{\ell-1}^1}+M_{{}_\ell\la_{\ell-1}^2}+M_{{}_\ell\la_{\ell-1}},\\
T_{{}_\ell\la_{\ell}^+}=&M_{{}_\ell\la_{\ell}^+}+M_{{}_\ell\la_{\ell}}+M_{{}_\ell\la_{\ell}^2}+M_{{}_\ell\la_{\ell-1}},\\
T_{{}_\ell\la_{\ell}^{1+}}=&M_{{}_\ell\la_{\ell}^{1+}}+M_{{}_\ell\la_{\ell}^1}+M_{{}_\ell\la_{\ell}^+}+M_{{}_\ell\la_{\ell}}+M_{{}_\ell\la_{\ell}^{12}}+M_{{}_\ell\la_{\ell}^2}+M_{{}_\ell\la_{\ell-1}^1}+M_{{}_\ell\la_{\ell-1}},\\
T_{{}_\ell\la_{\ell}^{2+}}=&M_{{}_\ell\la_{\ell}^{2+}}+M_{{}_\ell\la_{\ell}^2}+M_{{}_\ell\la_{\ell}^+}+M_{{}_\ell\la_{\ell}},\\
T_{{}_\ell\la_{\ell}^{12+}}=&M_{{}_\ell\la_{\ell}^{12+}}+M_{{}_\ell\la_{\ell}^{12}}+M_{{}_\ell\la_{\ell}^{1+}}+M_{{}_\ell\la_{\ell}^{2+}}+M_{{}_\ell\la_{\ell}^{1}}+M_{{}_\ell\la_{\ell}^{2}}+M_{{}_\ell\la_{\ell}^{+}}+M_{{}_\ell\la_{\ell}}.\\
\end{align*}
\end{itemize}
\end{thm}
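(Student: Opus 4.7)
The plan is to implement the translation-functor strategy outlined in the introduction, applied to the base tilting module $T_{\lambda - 2\delta} = M_{\lambda - 2\delta}$. For each $\lambda = {}_\ell\la_k^\imath$ in the theorem, the shift by $-2\delta$ takes $\lambda$ out of the block $\mc O_{\lambda_{[\ell]}}$: the atypicality $(\lambda, \delta + c\epsilon_3) = 0$ (for $c = \pm 1$ fixed by the block, cf.\ Theorem~\ref{thm::51}(I)) fails after the shift since $(\lambda - 2\delta, \delta + c\epsilon_3) = -4 \neq 0$. For the specific weights $\lambda - 2\delta$ arising this way, one verifies that the target block is either generic or has only one $W$-orbit representative below $\lambda-2\delta$, so that $T_{\lambda-2\delta}=M_{\lambda-2\delta}$ holds.

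Applying the exact translation functor $F(-) := (-) \otimes \mathfrak{g}$, where $\mathfrak{g}$ is the $31$-dimensional adjoint module, produces a module $F(M_{\lambda - 2\delta})$ whose Verma flag has subquotients $M_{\lambda - 2\delta + \nu}$, one for each weight $\nu$ of $\mathfrak{g}$ counted with multiplicity (multiplicity $3$ at $\nu = 0$, and $1$ at each root). Since $F$ sends tilting modules to direct sums of tilting modules, projecting onto $\mc O_{\lambda_{[\ell]}}$ yields a finite direct sum of tilting modules, one indecomposable summand of which is $T_\lambda$. The Verma subquotients surviving this block projection are enumerated directly from the description of $\mathrm{Irr}\,\mc O_{\lambda_{[\ell]}}$ in Theorem~\ref{thm::51}(I), and they produce precisely the multiset of weights $_\ell\la_j^\imath$ that must be distributed among the tilting summands.

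The distribution to individual tilting summands is carried out via Proposition~\ref{prop:flags}, which supplies lower bounds on the multiplicities $(T_\lambda : M_\mu)$: parts (1)--(2) supply the contributions from the reflections $s_0, s_1, s_{-\epsilon_3}$, parts (3)--(4) supply the isotropic shifts along $\delta + c\epsilon_3$, and parts (5)--(8) supply the combined reflection-plus-isotropic contributions. The stated Verma factors on the right-hand side of each claimed identity can thus be forced into $T_\lambda$; matching the total character of $F(M_{\lambda-2\delta})|_{\mc O_{\lambda_{[\ell]}}}$ against the sum of characters of the identified tilting summands converts these lower bounds into equalities, yielding the formulas in Part (1) of the theorem.

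The main obstacle will be the boundary case $k = \ell$ in Part (2), where the formulas differ from those of Part (1) and some Verma factors visibly collapse. Using the identification ${}_\ell\la_k = {}_\ell\la_{2\ell - k + 1}$, the index $k + 1$ wraps around to $\ell$, so the Verma factors appearing at index $k+1$ in the generic formulas coincide with terms at index $\ell$ but typically with different superscripts, determined by which element of $W_{\lambda_{[\ell]}}$ realizes the coincidence. For instance, the eight generically expected Verma factors in $T_{{}_\ell\la_\ell^{2+}}$ reduce to the four stated ones because successive wrap-arounds identify several of the summands. Working out the precise dictionary between the wrapped weights $_\ell\la_{\ell+1}^\imath$ and their index-$\ell$ representatives via the symbol notation \eqref{eq::wtid} and the reflection formulas of Section~\ref{subsect::simplesys}, the translation-functor argument from the generic case then applies verbatim to give Part (2).
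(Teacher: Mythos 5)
Your proposal follows the paper's strategy exactly: in fact the paper's entire ``proof'' of this theorem consists of the single sentence ``All tilting characters $T_\lambda$ \ldots can be obtained by applying the translation functor, associated with tensoring with the adjoint module, on $T_{\lambda-2\delta}=M_{\lambda-2\delta}$,'' and your first three paragraphs supply a correct fleshing-out of what that sentence means (identify the seed tilting module, apply $F = (-)\otimes\mathfrak g$, restrict to the block, force Verma factors into $T_\lambda$ via Proposition~\ref{prop:flags}, and match the total character). Apart from a harmless sign slip --- with $(\delta,\delta)=-2$ one has $(\lambda-2\delta,\delta+c\epsilon_3)=+4$, not $-4$, though either way it is nonzero --- that part is fine.

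However, your explanation of the boundary case $k=\ell$ does not work as stated. You suggest that the terms at index $k+1=\ell+1$ in the generic Part~(1) formulas ``wrap around'' via ${}_\ell\la_{k}={}_\ell\la_{2\ell-k+1}$ to index $\ell$ terms ``with different superscripts, determined by which element of $W_{\lambda_{[\ell]}}$ realizes the coincidence.'' But the superscripts are defined \emph{on the anti-dominant representative}: ${}_\ell\la_{m}^{\imath}=w_\imath({}_\ell\la_m)$, so from ${}_\ell\la_{\ell+1}={}_\ell\la_\ell$ one gets ${}_\ell\la_{\ell+1}^{\imath}={}_\ell\la_\ell^{\imath}$ with the \emph{same} superscript, never a different one. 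Naive substitution of $k=\ell$ into the Part~(1) formulas therefore produces a sum with every Verma factor appearing with multiplicity~$2$ (e.g.\ $T_{{}_\ell\la_\ell^{+}}=2M_{{}_\ell\la_\ell^{+}}+2M_{{}_\ell\la_\ell}$), which is not what Part~(2) asserts ($M_{{}_\ell\la_\ell^{+}}+M_{{}_\ell\la_\ell}+M_{{}_\ell\la_\ell^{2}}+M_{{}_\ell\la_{\ell-1}}$). What actually changes at $k=\ell$ is the multiset of weights $\lambda-2\delta+\nu$ that survive projection onto $\mc O_{\lambda_{[\ell]}}$ and how Proposition~\ref{prop:flags} distributes them among indecomposable tilting summands; this has to be recomputed for $k=\ell$ rather than being deduced from Part~(1) by re-indexing. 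So the ``applies verbatim'' claim in your last sentence is not justified --- you have correctly identified the obstacle, but the mechanism you propose for dealing with it is wrong and would need to be replaced by an actual re-running of the translation-functor computation for the weights ${}_\ell\la_\ell^{\imath}$.
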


For $\ell=0$ and $k\le 0$ we have ${}_0\la_{k}=[-\frac{1}{2}+k|-\frac{1}{4}+\frac{k}{2},-\frac{1}{4}+\frac{k}{2},\frac{1}{2}-k]$ so that ${}_0\la_{k}^1={}_0\la_{k}$.

\begin{thm} \label{thm:6.4}
Assume $\ell=0$.
  \begin{itemize}
    \item[(1)] If $k=0$, then
    \begin{align*}
      T_{{}_0\la_{0}}=&M_{{}_0\la_{0}}+M_{{}_0\la_{-1}},\\
      T_{{}_0\la_{0}^2}=&M_{{}_0\la_{0}^2}+M_{{}_0\la_{0}}+M_{{}_0\la_{-1}^2}+M_{{}_0\la_{-1}},\\ T_{{}_0\la_{0}^+}=&M_{{}_0\la_{0}^+}+M_{{}_0\la_{0}}+M_{{}_0\la_{0}^2}+M_{{}_0\la_{-1}},\\ T_{{}_0\la_{0}^{2+}}=&M_{{}_0\la_{0}^{2+}}+M_{{}_0\la_{0}^2}+M_{{}_0\la_{0}^+}+M_{{}_0\la_{0}}. \end{align*}
    \item[(2)] If $k<0$, then
    \begin{align*}
      T_{{}_0\la_{k}}=&M_{{}_0\la_{k}}+M_{{}_0\la_{k-1}},\\
      T_{{}_0\la_{k}^2}=&M_{{}_0\la_{k}^2}+M_{{}_0\la_{k}}+M_{{}_0\la_{k-1}^2}+M_{{}_0\la_{k-1}},\\ T_{{}_0\la_{k}^+}=&M_{{}_0\la_{k}^+}+M_{{}_0\la_{k}}+M_{{}_0\la_{k+1}^+}+M_{{}_0\la_{k+1}},\\ T_{{}_0\la_{k}^{2+}}=&M_{{}_0\la_{k}^{2+}}+M_{{}_0\la_{k}^+}+M_{{}_0\la_{k}^2}+M_{{}_0\la_{k}}+M_{{}_0\la_{k+1}^{2+}}+M_{{}_0\la_{k+1}^+}+M_{{}_0\la_{k+1}^2}+M_{{}_0\la_{k+1}}. \end{align*}
  \end{itemize}
\end{thm}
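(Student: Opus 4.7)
The proof follows the strategy already established for Theorem \ref{thm:6.3}. For each target weight $\mu \in \{{}_0\la_k,\, {}_0\la_k^2,\, {}_0\la_k^+,\, {}_0\la_k^{2+}\}$, the plan is to start from the initial tilting $T_{\mu - 2\delta}$, where $\mu - 2\delta$ is chosen to be dominant in its own block so that $T_{\mu-2\delta}=M_{\mu-2\delta}$, and then apply the translation functor $\theta := -\otimes V_{\mathrm{ad}}$, with $V_{\mathrm{ad}}$ the $31$-dimensional adjoint module of $\mf g$. Since $\theta$ is exact and preserves tilting modules, its projection onto $\mc O_{\la_{[0]}}$ is a direct sum of tiltings in which $T_\mu$ occurs as a summand. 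Note from the outset that because the first two symbol entries of ${}_0\la_k$ coincide, we have ${}_0\la_k^1 = {}_0\la_k$ and ${}_0\la_k^{12} = {}_0\la_k^2$, which is why only four weights per $k$ appear in this block instead of the eight occurring generically in Theorem \ref{thm:6.3}.

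First I would compute the Verma flag of $\theta M_{\mu-2\delta}$ (before projection) as $\sum_\nu (\dim V_{\mathrm{ad}}[\nu]) \ch M_{\mu-2\delta+\nu}$, and then discard those summands whose highest weights do not lie in $\mathrm{Irr}\,\mc O_{\la_{[0]}}$ according to Theorem \ref{thm::blocks}. The surviving summands provide an upper bound for $\ch T_\mu$ plus the characters of the other tilting direct summands of the projection. In parallel, Proposition \ref{prop:flags} furnishes matching lower bounds $(T_\mu:M_\sigma)\ge 1$ for every $\sigma$ appearing in the claimed formulas; the relevant reflections are $s_0$ and $s_{\epsilon_3}=s_2 s_1 s_2 s_1 s_2$, together with the isotropic odd roots $\delta \pm \epsilon_3$, which are precisely the roots linking ${}_0\la_k$ to ${}_0\la_{k\pm 1}$.

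For part (2), $k<0$, I would proceed by downward induction on $k$ starting from a suitably deep antidominant initial datum. At each stage the decomposition $\theta T_{\mu-2\delta} \cong T_\mu \oplus (\text{other tiltings})$ is read off by comparing highest weights, and the inductive hypothesis supplies the characters of the remaining summands, so $\ch T_\mu$ is recovered by subtraction. Because all four weights ${}_0\la_k^\imath$ are distinct in this regime, the bookkeeping proceeds in exact parallel with part (1) of Theorem \ref{thm:6.3}, only shortened by the collapse of the $s_1$-branch.

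The main obstacle is the boundary case $k=0$ of part (1), where ${}_0\la_0$ lies on the reflection hyperplane of $s_1$. Here the translation functor produces a direct sum with fewer distinct tilting summands, and several Verma modules acquire extra multiplicity; for instance the term $M_{{}_0\la_0^2}$ appears unexpectedly inside $T_{{}_0\la_0^+}$. I would handle this by invoking parts (7)--(8) of Proposition \ref{prop:flags}, which combine a real reflection $s_\alpha$ with an isotropic odd shift by $\gamma$, to produce exactly these extra lower bounds. BGG reciprocity $(T_{-\la}:M_{-\mu})=[M_\mu:L_\la]$ then caps the multiplicities from above so that the lower bounds from Proposition \ref{prop:flags} are saturated, yielding the stated formulas.
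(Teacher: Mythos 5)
Your proposal is essentially the paper's own argument, which is just the one-line remark preceding Theorem \ref{thm:6.3}: each $T_\la$ here is obtained by translating $T_{\la-2\delta}=M_{\la-2\delta}$ via tensoring with the adjoint module and matching the lower bounds furnished by Proposition~\ref{prop:flags}. One small misattribution to fix: in $T_{{}_0\la_0^+}$ the factor that actually requires Proposition~\ref{prop:flags}(7) is $M_{{}_0\la_{-1}}=M_{s_0({}_0\la_0^+)-(\delta-\ep_3)}$, whereas $M_{{}_0\la_0^2}=M_{{}_0\la_0^+-(\delta+\ep_3)}$ already follows from part~(3) since $({}_0\la_0^+,\delta+\ep_3)=0$; also the initial weight $\la-2\delta$ should be \emph{anti}-dominant, not dominant, for $M_{\la-2\delta}$ to be tilting.
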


%------------------------------------------------
\subsubsection{Tilting characters in $\mc O_{\mu_{[\ell]}}$}

Assume $\ell\in\mathbb{Z}\pm\frac{1}{4}$. Fix $k\in\mathbb{Z}$.

We denote ${}_\ell\mu_{k}$ the unique anti-dominant weight in $W_{\mu_{[\ell]}}([\ell+k|\ell+k,\ell-\frac{1}{2}k,-2\ell-\frac{1}{2}k])$. Let
${}_\ell\mu_{k}^1=s_1({}_\ell\mu_{k})$, ${}_\ell\mu_{k}^2=s_{\epsilon_3}({}_\ell\mu_{k})$ and
${}_\ell\mu_{k}^{12}=s_1s_{\epsilon_3}({}_\ell\mu_{k})$. Note that ${}_\ell\mu_{k}={}_\ell\mu_{k}^1$ and ${}_\ell\mu_{k}^2={}_\ell\mu_{k}^{12}$ (respectively, ${}_\ell\mu_{k}={}_\ell\mu_{k}^2$ and ${}_\ell\mu_{k}^1={}_\ell\mu_{k}^{12}$) if and only if $k=0$ (respectively, $k=-4\ell$).

The translation functor associated with tensoring with the adjoint module on $T_{\lambda-2\delta}=M_{\lambda-2\delta}$ still works to obtain the following theorem.

\begin{thm}
\begin{itemize}
\item[(1)]
If $k\neq0,1,-4\ell,-4\ell+1$, then
\begin{align*}
    T_{{}_\ell\mu_{k}}=&M_{{}_\ell\mu_{k}}+M_{{}_{\ell}\mu_{k-1}},\\
    T_{{}_\ell\mu_{k}^1}=&M_{{}_\ell\mu_{k}^1}+M_{{}_\ell\mu_{k}}+M_{{}_{\ell}\mu_{k-1}^1}+M_{{}_{\ell}\mu_{k-1}},\\
    T_{{}_\ell\mu_{k}^2}=&M_{{}_\ell\mu_{k}^2}+M_{{}_{\ell}\mu_{k}}+M_{{}_\ell\mu_{k-1}^2}+M_{{}_{\ell}\mu_{k-1}},\\
    T_{{}_\ell\mu_{k}^{12}}=&M_{{}_\ell\mu_{k}^{12}}+M_{{}_{\ell}\mu_{k}^{1}}+M_{{}_\ell\mu_{k}^{2}}+M_{{}_{\ell}\mu_{k}}
    +M_{{}_\ell\mu_{k-1}^{12}}+M_{{}_{\ell}\mu_{k-1}^{1}}+M_{{}_\ell\mu_{k-1}^{2}}+M_{{}_{\ell}\mu_{k-1}}.
  \end{align*}
\item[(2)] If $k=0\neq-4\ell+1$, then
\begin{align*}
T_{{}_\ell\mu_{0}}=&M_{{}_\ell\mu_{0}}+M_{{}_{\ell}\mu_{-1}}+M_{{}_{\ell}\mu_{-1}^1},\\
       T_{{}_\ell\mu_{0}^2}=&M_{{}_\ell\mu_{0}^2}+M_{{}_{\ell}\mu_{0}}+M_{{}_\ell\mu_{-1}^2}+M_{{}_{\ell}\mu_{-1}}+M_{{}_\ell\mu_{-1}^{12}}+M_{{}_{\ell}\mu_{-1}^1}.\\
    \end{align*}
\item[(3)] If $k=1\neq-4\ell$, then
\begin{align*}
    T_{{}_\ell\mu_{1}}=&M_{{}_\ell\mu_{1}}+M_{{}_{\ell}\mu_{0}},\\
    T_{{}_\ell\mu_{1}^1}=&M_{{}_\ell\mu_{1}^1}+M_{{}_\ell\mu_{1}}+M_{{}_{\ell}\mu_{0}},\\
    T_{{}_\ell\mu_{1}^2}=&M_{{}_\ell\mu_{1}^2}+M_{{}_{\ell}\mu_{1}}+M_{{}_\ell\mu_{0}^2}+M_{{}_{\ell}\mu_{0}},\\
    T_{{}_\ell\mu_{1}^{12}}=&M_{{}_\ell\mu_{1}^{12}}+M_{{}_{\ell}\mu_{1}^{1}}+M_{{}_\ell\mu_{1}^{2}}+M_{{}_{\ell}\mu_{1}}
    +M_{{}_\ell\mu_{0}^{2}}+M_{{}_{\ell}\mu_{0}}.
  \end{align*}
  \item[(4)] If $k=-4\ell\neq1$, then
  \begin{align*}
    T_{{}_\ell\mu_{-4\ell}}=&M_{{}_\ell\mu_{-4\ell}}+M_{{}_{\ell}\mu_{-4\ell-1}}+M_{{}_\ell\mu_{-4\ell-1}^2},\\
    T_{{}_\ell\mu_{-4\ell}^1}=&M_{{}_\ell\mu_{-4\ell}^1}+M_{{}_\ell\mu_{-4\ell}}+M_{{}_\ell\mu_{-4\ell-1}^{12}}+M_{{}_{\ell}\mu_{-4\ell-1}^{1}}+M_{{}_\ell\mu_{-4\ell-1}^{2}}+M_{{}_{\ell}\mu_{-4\ell-1}}.
  \end{align*}
  \item[(5)] If $k=-4\ell+1\neq0$, then
  \begin{align*}
    T_{{}_\ell\mu_{k}}=&M_{{}_\ell\mu_{k}}+M_{{}_{\ell}\mu_{k-1}},\\
    T_{{}_\ell\mu_{k}^1}=&M_{{}_\ell\mu_{k}^1}+M_{{}_\ell\mu_{k}}+M_{{}_{\ell}\mu_{k-1}^1}+M_{{}_{\ell}\mu_{k-1}},\\
    T_{{}_\ell\mu_{k}^2}=&M_{{}_\ell\mu_{k}^2}+M_{{}_{\ell}\mu_{k}}+M_{{}_{\ell}\mu_{k-1}},\\
    T_{{}_\ell\mu_{k}^{12}}=&M_{{}_\ell\mu_{k}^{12}}+M_{{}_{\ell}\mu_{k}^{1}}+M_{{}_\ell\mu_{k}^{2}}+M_{{}_{\ell}\mu_{k}}
    +M_{{}_{\ell}\mu_{k-1}^{1}}+M_{{}_{\ell}\mu_{k-1}}.
  \end{align*}
  \item[(6)] If $k=0=-4\ell+1$, then
  \begin{align*}
    T_{_{\frac{1}{4}}\mu_0}=&M_{_{\frac{1}{4}}\mu_0}+M_{_{\frac{1}{4}}\mu_{-1}^1}+M_{_{\frac{1}{4}}\mu_{-1}}
    +M_{_{\frac{1}{4}}\mu_{-2}^1}+M_{_{\frac{1}{4}}\mu_{-2}},\\
     T_{_{\frac{1}{4}}\mu^2_0}=&M_{_{\frac{1}{4}}\mu^2_0}+M_{_{\frac{1}{4}}\mu_0}+M_{_{\frac{1}{4}}\mu^1_{-1}}
     +M_{_{\frac{1}{4}}\mu_{-1}}.
  \end{align*}
\iffalse
  \begin{align*}
    T_{[\frac{1}{4}|-\frac{1}{4},-\frac{1}{4},\frac{1}{2}]}=&M_{[\frac{1}{4}|-\frac{1}{4},-\frac{1}{4},\frac{1}{2}]}+M_{[-\frac{3}{4}|\frac{3}{4},-\frac{3}{4},0]}+M_{[-\frac{3}{4}|-\frac{3}{4},\frac{3}{4},0]}
    +M_{[-\frac{7}{4}|\frac{5}{4},-\frac{7}{4},\frac{1}{2}]}+M_{[-\frac{7}{4}|-\frac{7}{4},\frac{5}{4},\frac{1}{2}]},\\
     T_{[\frac{1}{4}|\frac{1}{4},\frac{1}{4},-\frac{1}{2}]}=&M_{[\frac{1}{4}|\frac{1}{4},\frac{1}{4},-\frac{1}{2}]}+M_{[\frac{1}{4}|-\frac{1}{4},-\frac{1}{4},\frac{1}{2}]}+M_{[-\frac{3}{4}|\frac{3}{4},-\frac{3}{4},0]}+M_{[-\frac{3}{4}|-\frac{3}{4},\frac{3}{4},0]}.
  \end{align*}

  \item[(7)] If $k=1=-4\ell$, then
  \begin{align*}
    T_{[\frac{3}{4}|\frac{3}{4},-\frac{3}{4},0]}=&M_{[\frac{3}{4}|\frac{3}{4},-\frac{3}{4},0]}+M_{[-\frac{1}{4}|-\frac{1}{4},-\frac{1}{4},\frac{1}{2}]}+M_{[-\frac{1}{4}|\frac{1}{4},\frac{1}{4},-\frac{1}{2}]},\\
    T_{[\frac{3}{4}|-\frac{3}{4},\frac{3}{4},0]}=&M_{[\frac{3}{4}|-\frac{3}{4},\frac{3}{4},0]}+M_{[\frac{3}{4}|\frac{3}{4},-\frac{3}{4},0]}+M_{[-\frac{1}{4}|-\frac{1}{4},-\frac{1}{4},\frac{1}{2}]}+M_{[-\frac{1}{4}|\frac{1}{4},\frac{1}{4},-\frac{1}{2}]}.
\end{align*}
\fi
\item[(7)] If $k=1=-4\ell$, then
\begin{align*}
    T_{_{-\frac{1}{4}}\mu_1}=&M_{_{-\frac{1}{4}}\mu_1}+M_{_{-\frac{1}{4}}\mu^2_0}
    +M_{_{-\frac{1}{4}}\mu_0},\\
    T_{_{-\frac{1}{4}}\mu^1_1}=&M_{_{-\frac{1}{4}}\mu^1_1}+M_{_{-\frac{1}{4}}\mu_1}
    +M_{_{-\frac{1}{4}}\mu^2_0}+M_{_{-\frac{1}{4}}\mu_0}.
\end{align*}

\end{itemize}
\end{thm}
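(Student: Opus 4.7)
The strategy mirrors that of the proof of Theorem~\ref{thm:6.3}, reusing the translation functor $F := \mathrm{pr}_{\mc O_{\mu_{[\ell]}}} \circ (\mathfrak{g} \otimes -)$, where $\mathfrak{g}$ denotes the $31$-dimensional adjoint module. For each target weight $\eta \in \{\,{}_\ell\mu_k^{\imath}\}$, the plan is to select an initial weight $\lambda$, linked to $\eta$, which is antidominant enough that $M_{\lambda - 2\delta}$ is already tilting (i.e.\ $T_{\lambda - 2\delta} = M_{\lambda - 2\delta}$). Since translation functors send tilting modules to tilting modules, $F M_{\lambda - 2\delta}$ will then decompose as a direct sum of indecomposable tiltings, one of whose summands is the target $T_\eta$.

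The character of $FM_{\lambda - 2\delta}$ equals $\sum_{\nu}(\dim \mathfrak{g}_\nu)\,\ch M_{\lambda - 2\delta + \nu}$ truncated to those $\nu$ for which $\lambda - 2\delta + \nu \in \mathrm{Irr}\,\mc O_{\mu_{[\ell]}}$. The admissible $\nu$ are identified using Theorem~\ref{thm::51}(II) together with the classification of $\mathfrak{g}$-weights; one collects the resulting Verma characters. Then Proposition~\ref{prop:flags} is applied iteratively: the highest weight among the surviving Vermas gives a tilting direct summand $T$ with $(T:M_{\lambda - 2\delta + \nu})>0$ for each $\nu$ forced by the proposition, after which the character of $T$ is subtracted and the procedure is repeated on the remainder. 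Induction on $|k|$ (the characters of $T_{{}_\ell\mu_{k\pm1}^{\imath}}$ being already known) then lets us read off $\ch T_{{}_\ell\mu_k^{\imath}}$.

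In the generic range $k \notin \{0,1,-4\ell,-4\ell+1\}$, the four weights ${}_\ell\mu_k, {}_\ell\mu_k^1, {}_\ell\mu_k^2, {}_\ell\mu_k^{12}$ in each Weyl orbit are distinct, and the combinatorics produce the uniform two-term and four-term Verma decompositions of case~(1). The remaining cases arise from the coincidences
\begin{align*}
{}_\ell\mu_0 &= {}_\ell\mu_0^1, & {}_\ell\mu_0^2 &= {}_\ell\mu_0^{12},\\
{}_\ell\mu_{-4\ell} &= {}_\ell\mu_{-4\ell}^2, & {}_\ell\mu_{-4\ell}^1 &= {}_\ell\mu_{-4\ell}^{12},
\end{align*}
which collapse the four-element orbit to a two-element one; the formulas of cases (2)--(5) record the resulting absorption of Verma flag multiplicities. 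Finally, cases~(6) and (7), corresponding to $\ell = \tfrac14$ with $k=0$ and $\ell = -\tfrac14$ with $k=1$ respectively, exhibit both collapses simultaneously, and require starting from a tilting module further from the walls and tracking contributions from two shells of the adjoint-weight orbit.

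The main obstacle is the careful bookkeeping in the exceptional cases (6) and (7): the generic application of $F$ once may not suffice to single out $T_\eta$ as a direct summand because too many Verma flags coincide; one must either start from a more antidominant $\lambda$ (forcing $M_{\lambda-2\delta}$ to sit in a completely generic orbit), or iterate the translation functor, and then verify via Proposition~\ref{prop:flags}(3)--(8) that each claimed Verma summand genuinely occurs in $T_\eta$ with the asserted multiplicity. BGG reciprocity and the simple-preserving duality of $\mc O$ serve as consistency checks for the tally.
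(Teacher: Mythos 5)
Your approach matches the paper's. For this theorem the paper states only that ``the translation functor associated with tensoring with the adjoint module on $T_{\lambda-2\delta}=M_{\lambda-2\delta}$ still works'', i.e.\ the identical machinery as in Theorems~\ref{thm:6.3} and~\ref{thm:6.4}: take an antidominant $\lambda-2\delta$ in a neighboring block, translate by the $31$-dimensional adjoint module, and read off summands using Proposition~\ref{prop:flags}. You correctly identify the two orbit collapses ${}_\ell\mu_0={}_\ell\mu_0^1$, ${}_\ell\mu_0^2={}_\ell\mu_0^{12}$ and ${}_\ell\mu_{-4\ell}={}_\ell\mu_{-4\ell}^2$, ${}_\ell\mu_{-4\ell}^1={}_\ell\mu_{-4\ell}^{12}$ as the source of the non-generic cases, which is precisely what the paper records just before the theorem.

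One small divergence: you suggest that cases (6) and (7) (where both collapses occur in adjacent levels) ``require starting from a tilting module further from the walls'' or ``iterating the translation functor.'' The paper does not indicate any such modification---in contrast with the $\nu_{[\ell]}$ theorem, where for $k=-\ell+1,-\ell+2$ the authors explicitly switch the starting tilting to $T_{\lambda-(\delta+\epsilon_2)}$. For the $\mu_{[\ell]}$ family the uniform start at $T_{\lambda-2\delta}$ is claimed to cover all seven cases, including (6) and (7), because the weights of the adjoint module already produce Verma factors at levels $k-1$ and $k-2$ in the degenerate configuration. Your caution is not wrong as a matter of care, but it is not needed under the paper's account; you could simply verify via Proposition~\ref{prop:flags} and Theorem~\ref{thm::blocks} that the relevant adjoint weights $\gamma$ land $\lambda-2\delta+\gamma$ at the stated levels, which yields a five-term sum directly.
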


%------------------------------------------------
\subsubsection{Tilting characters in $\mc O_{\nu_{[\ell]}}$}

Assume $\ell\in3\mathbb{Z}+1$ and fix $k\in\mathbb{Z}$. Denote ${}_\ell\nu_k$ the unique anti-dominant weight in $W_{\nu_{[\ell]}}([\ell+k|\frac{1}{4}-\frac{k}{2},-\ell-\frac{1}{4}-\frac{k}{2},\ell+k])$.
Let ${}_\ell\nu_k^1=s_1({}_\ell\nu_k)$, ${}_\ell\nu_k^2=s_{\epsilon_3}({}_\ell\nu_k)$ and ${}_\ell\nu_k^{12}=s_1s_{\epsilon_3}({}_\ell\nu_k)$. Note that ${}_\ell\nu_k^2={}_\ell\nu_k$ and ${}_\ell\nu_k^{12}={}_\ell\nu_k^1$ if and only if $\ell+k=0$.

\begin{thm}
  Assume $\ell\in3\mathbb{Z}+1$ and $k\in \mathbb{Z}$.
  \begin{itemize}
    \item[(1)] If $k\neq-\ell, -\ell+1$, then
     \begin{align*}
    T_{{}_\ell\nu_{k}}=&M_{{}_\ell\nu_{k}}+M_{{}_{\ell}\nu_{k-1}},\\
    T_{{}_\ell\nu_{k}^1}=&M_{{}_\ell\nu_{k}^1}+M_{{}_\ell\nu_{k}}+M_{{}_{\ell}\nu_{k-1}^1}+M_{{}_{\ell}\nu_{k-1}},\\
    T_{{}_\ell\nu_{k}^2}=&M_{{}_\ell\nu_{k}^2}+M_{{}_{\ell}\nu_{k}}+M_{{}_{\ell}\nu_{k-1}^2}+M_{{}_{\ell}\nu_{k-1}},\\
    T_{{}_\ell\nu_{k}^{12}}=&M_{{}_\ell\nu_{k}^{12}}+M_{{}_{\ell}\nu_{k}^{1}}+M_{{}_\ell\nu_{k}^{2}}+M_{{}_{\ell}\nu_{k}}
    +M_{{}_{\ell}\nu_{k-1}^{12}}+M_{{}_{\ell}\nu_{k-1}^1}+M_{{}_{\ell}\nu_{k-1}^{2}}+M_{{}_{\ell}\nu_{k-1}}.
  \end{align*}
  \item[(2)] If $k=-\ell$, then
  \begin{align*}
    T_{{}_\ell\nu_{-\ell}}=&M_{{}_\ell\nu_{-\ell}}+M_{{}_{\ell}\nu_{-\ell-1}}+M_{{}_{\ell}\nu_{-\ell-1}^2},\\
    T_{{}_\ell\nu_{-\ell}^1}=&M_{{}_\ell\nu_{-\ell}^1}+M_{{}_\ell\nu_{-\ell}}+M_{{}_{\ell}\nu_{-\ell-1}^1}+M_{{}_{\ell}\nu_{-\ell-1}^{12}}+M_{{}_{\ell}\nu_{-\ell-1}}+M_{{}_{\ell}\nu_{-\ell-1}^{2}}.
    \end{align*}
  \item[(3)] If $k=-\ell+1$, then
  \begin{align*}
    T_{{}_\ell\nu_{-\ell+1}}=&M_{{}_\ell\nu_{-\ell+1}}+M_{{}_{\ell}\nu_{-\ell}}+M_{{}_\ell\nu_{-\ell-1}},\\
    T_{{}_\ell\nu_{-\ell+1}^1}=&M_{{}_\ell\nu_{-\ell+1}^1}+M_{{}_\ell\nu_{-\ell+1}}+M_{{}_{\ell}\nu_{-\ell}^1}+M_{{}_{\ell}\nu_{-\ell}} +M_{{}_\ell\nu_{-\ell-1}^1}+M_{{}_\ell\nu_{-\ell-1}},\\
    T_{{}_\ell\nu_{-\ell+1}^2}=&M_{{}_\ell\nu_{-\ell+1}^2}+M_{{}_{\ell}\nu_{-\ell+1}}+M_{{}_{\ell}\nu_{-\ell}},\\
    T_{{}_\ell\nu_{-\ell+1}^{12}}=&M_{{}_\ell\nu_{-\ell+1}^{12}}+M_{{}_{\ell}\nu_{-\ell+1}^{1}}+M_{{}_\ell\nu_{-\ell+1}^{2}}+M_{{}_{\ell}\nu_{-\ell+1}}
    +M_{{}_{\ell}\nu_{-\ell}^1}+M_{{}_{\ell}\nu_{-\ell}}.
  \end{align*}
  \end{itemize}
\end{thm}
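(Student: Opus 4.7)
The plan is to apply the translation functor strategy used for the preceding theorems. For generic $k$, we start with $T_{\la - 2\delta}$, where $\la - 2\delta$ lies in a typical block: a direct computation using $\la = {}_\ell\nu_k$ gives $(\la - 2\delta, \delta + \vare_3) = 4$ and $(\la - 2\delta, \delta - \vare_3) = 4(1 - \ell - k)$, both nonzero when $k \neq -\ell + 1$, and the other isotropic odd roots similarly fail to vanish on $\la - 2\delta$. By Theorem \ref{thm::tysty}, the typical block containing $\la - 2\delta$ is equivalent to a block of $\mf{sl}(2)\oplus G_2$, so $T_{\la - 2\delta} = M_{\la - 2\delta}$ when $\la - 2\delta$ is chosen antidominant in its orbit. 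Applying the translation functor $F(-) = \mathrm{pr}_{\nu_{[\ell]}}(- \otimes \g)$, the Verma module $M_\la$ appears as a subquotient of $M_{\la - 2\delta} \otimes \g$ via the weight $2\delta$ of $\g$, and $T_\la$ is an indecomposable summand of $F M_{\la - 2\delta}$. The character of $F M_{\la - 2\delta}$ then provides an upper bound on the Verma multiplicities of $T_\la$.

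For the generic case $k \neq -\ell, -\ell + 1$, the key observation is that ${}_\ell\nu_k - {}_\ell\nu_{k-1} = \delta + \vare_3$, which is the atypical root in $A({}_\ell\nu_k)$. Proposition \ref{prop:flags}(3) then forces the summand $M_{{}_\ell\nu_{k-1}}$ into $T_{{}_\ell\nu_k}$. For the reflected weights ${}_\ell\nu_k^\imath$ (with $\imath \in \{1, 2, 12\}$), parts (4) and (6) of Proposition \ref{prop:flags} produce the further Verma factors by combining the odd reflection by $\delta+\vare_3$ with the simple reflections $s_1, s_{-\vare_3}$ that generate the integral Weyl group $W_{{}_\ell\nu_k}$. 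Matching these lower bounds with the upper bound from $\ch F M_{\la - 2\delta}$ yields the stated tilting characters.

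For the special cases $k = -\ell, -\ell+1$, the weight ${}_\ell\nu_{-\ell} = [0\mid \tfrac{1}{4}+\tfrac{\ell}{2}, -\tfrac{1}{4}-\tfrac{\ell}{2}, 0]$ satisfies $d = 0$ and $z = 0$, so both $\delta + \vare_3$ and $\delta - \vare_3$ lie in $A({}_\ell\nu_{-\ell})$, and the $W_\la$-orbit collapses since $s_{-\vare_3}$ fixes ${}_\ell\nu_{-\ell}$. A direct computation gives ${}_\ell\nu_{-\ell} - {}_\ell\nu_{-\ell-1}^2 = \delta - \vare_3$, so the second atypical root produces the extra summand $M_{{}_\ell\nu_{-\ell-1}^2}$ in $T_{{}_\ell\nu_{-\ell}}$ via Proposition \ref{prop:flags}(3); the analogous argument with $s_1$ produces $M_{{}_\ell\nu_{-\ell-1}^{12}}$ in $T_{{}_\ell\nu_{-\ell}^1}$. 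The case $k = -\ell + 1$ then inherits an extra summand $M_{{}_\ell\nu_{-\ell-1}}$ from the identity ${}_\ell\nu_{-\ell+1} - {}_\ell\nu_{-\ell-1} = 2(\delta + \vare_3)$, obtained by descending twice through the atypical direction, which is enabled by the double atypicality of the intermediate weight ${}_\ell\nu_{-\ell}$.

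The main obstacle will be the upper-bound analysis in the special cases, where the collapse of the orbit and the double atypicality at ${}_\ell\nu_{-\ell}$ cause the translation functor output $F M_{\la - 2\delta}$ to exhibit non-generic Verma multiplicities that must be precisely accounted for. A particular subtlety arises at $k = -\ell + 1$, since $\la - 2\delta$ is itself atypical (with $(\la - 2\delta, \delta - \vare_3) = 0$), so the initial tilting module $T_{\la - 2\delta}$ is no longer a single Verma; one must either compute it first by a separate argument (e.g., descent from a generic $k$) or circumvent the issue by translating from a different starting point. Once the upper bounds are established, the theorem follows by comparison with the lower bounds from Proposition \ref{prop:flags}, and the BGG reciprocity $(T_{-\la} : M_{-\mu}) = [M_\mu : L_\la]$ together with the simple-module combinatorics from Theorem \ref{thm::blocks} provides an independent consistency check.
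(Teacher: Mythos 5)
Your overall strategy matches the paper's: start from $T_{\la-2\delta}=M_{\la-2\delta}$, apply the translation by the adjoint module, and pin down $T_\la$ by combining the lower bounds from Proposition \ref{prop:flags} with the character of the translated module. However, you have misidentified which values of $k$ require a different starting point. The paper does not single out $k=-\ell$ in the \emph{method} (only the \emph{formula} is special there, because ${}_\ell\nu_{-\ell}$ is doubly atypical and $s_{-\vare_3}$-fixed); the translation from $T_{\la-2\delta}$ still works at $k=-\ell$. By contrast, the method breaks down at $k=-\ell+1$ \emph{and} at $k=-\ell+2$, and for both the paper instead translates from $T_{\la-(\delta+\vare_2)}=M_{\la-(\delta+\vare_2)}$. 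You noticed $k=-\ell+1$ (where $\la-2\delta$ is atypical since $d=-z$ for ${}_\ell\nu_{-\ell+1}-2\delta$), but you missed $k=-\ell+2$: there $\la-2\delta$ is typical and antidominant, so $T_{\la-2\delta}=M_{\la-2\delta}$, yet the translated module is no longer indecomposable. The culprit is the reflection symmetry ${}_\ell\nu_{-\ell+s}={}_\ell\nu_{-\ell-s}+2s\delta$ in the $\vare$-coordinates; at $s=2$ this forces ${}_\ell\nu_{-\ell+2}-4\delta={}_\ell\nu_{-\ell-2}$, so the weight $-2\delta\in\mathrm{wt}(\g)$ produces an extra Verma $M_{{}_\ell\nu_{-\ell-2}}$ inside $F M_{\la-2\delta}$ that does not belong to $T_{{}_\ell\nu_{-\ell+2}}$, and extracting the correct summand would need further argument that neither you nor the short generic scheme supplies.

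There are also two small computational slips in the lower-bound analysis that do not affect the conclusion but should be fixed. At $k=-\ell$ one has ${}_\ell\nu_{-\ell}-{}_\ell\nu_{-\ell-1}=\delta-\vare_3$ and ${}_\ell\nu_{-\ell}-{}_\ell\nu_{-\ell-1}^2=\delta+\vare_3$ (you swapped them). And ${}_\ell\nu_{-\ell+1}-{}_\ell\nu_{-\ell-1}=2\delta$, not $2(\delta+\vare_3)$: the term $M_{{}_\ell\nu_{-\ell-1}}$ in $T_{{}_\ell\nu_{-\ell+1}}$ comes from Proposition \ref{prop:flags}(5) applied with $\beta=\delta+\vare_3$ and \emph{then} $\gamma=\delta-\vare_3$ (using the double atypicality of ${}_\ell\nu_{-\ell}$), not from descending twice along the \emph{same} atypical root.
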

\begin{proof}
All formulas except the cases of $k=-\ell+2, -\ell+1$, can be obtained by applying the translation functor, associated with tensoring with the adjoint module, on $T_{\lambda-2\delta}=M_{\lambda-2\delta}$.

For $k=-\ell+2,-\ell+1$, we apply the translation functor to $T_{\lambda-(\delta+\epsilon_2)}=M_{\lambda-(\delta+\epsilon_2)}$.
\end{proof}

%==================================================================================
\section{Character formulas in $\mc O_{S_3}$ cases} \label{sect::OS3case}

\subsection{Blocks in   $\mc O_{S_3}$ cases}

 In this section, we assume that $\la = d\delta +a\omega_1 +b\omega_2$ is an atypical weight such that $W_\la^1 \cong S_3$.

 For each $\ell\in \N$ with $\ell\not \equiv 0~(\text{mod }3)$, we define the following  weights  $$\la_{[\ell]}:=\left[-{\ell}/{2}~\middle|~- {\ell}/{2}, 0, {\ell}/{2}\right],$$
  $$\la'_{[\ell]}:=\left[-{\ell}/{2}~\middle|~ 0, -{\ell}/{2}, {\ell}/{2}\right].$$

Note that $S_3\cong\langle s_2=s_{\ep_1},s_{\ep_2},s_{-\ep_3}\rangle$ and observe that
 $$W_{\la_{[\ell]}} = \begin{cases}
 \Z_2\times S_3,& \text{ if $\ell$ is odd}, \\
 S_3,& \text{ if $\ell$ is even}.
 \end{cases}$$

% \begin{lem}If $k$ is odd then $W_{\la_{[k]}}\cong \Z_2\times S_3$. If $k$ is even then $W_{\la_{[k]}}\cong S_3$.  \end{lem}

\begin{lem} \label{lem::61}
For each $\ell\in \N$, we have  $[{\ell}/{2}|- {\ell}/{2}, 0,{\ell}/{2}]\in\emph{Irr}\mc O_{\la_{[\ell]}}$ and $[{\ell}/{2}|0,- {\ell}/{2},{\ell}/{2}]\in\emph{Irr}\mc O_{\la'_{[\ell]}}$.
\end{lem}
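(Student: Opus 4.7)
The plan is to invoke the linkage description from Theorem \ref{thm::blocks}, which says that $\mathrm{Irr}\,\mc O_\la = \{W_\la(\la+k\alpha)\mid k\in\Z\}$ for any fixed $\alpha\in A(\la)$. It therefore suffices to exhibit, in each case, an isotropic atypical root $\alpha$, an integer $k$, and a Weyl-group element in $W_\la^1\cong S_3$ that carries $\la+k\alpha$ onto the target weight.

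For $\la_{[\ell]}=[-\ell/2\mid -\ell/2,0,\ell/2]$ I would first use \eqref{eq::Weylgpcoroot} to compute $(\la_{[\ell]},\delta)=\ell$ and $(\la_{[\ell]},\epsilon_1)=-\ell$, so that $\alpha:=\delta+\epsilon_1\in A(\la_{[\ell]})$. Converting $\alpha$ to symbol notation via the formula $[d\mid x,y,z]=d\delta+\tfrac{2}{3}(y-x)\omega_1+2x\omega_2$ gives $\delta+\epsilon_1=[1\mid 1,-1/2,-1/2]$, and a short computation yields
\[
\la_{[\ell]}+\ell(\delta+\epsilon_1)=[\ell/2\mid \ell/2,-\ell/2,0].
\]
The triple $[\ell/2,-\ell/2,0]$ is a cyclic permutation of $[-\ell/2,0,\ell/2]$. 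Since direct computation on symbols shows $s_{\epsilon_2}s_{\epsilon_1}[x,y,z]=[y,z,x]$, and since $s_{\epsilon_1},s_{\epsilon_2}\in W_{\la_{[\ell]}}^1=\langle s_{\epsilon_1},s_{\epsilon_2},s_{-\epsilon_3}\rangle$ by Theorem \ref{thm::desnonblocks}(4) (an integrality check via \eqref{eq::Weylgpcoroot} confirms these belong to $W_{\la_{[\ell]}}$), we obtain
\[
[\ell/2\mid -\ell/2,0,\ell/2]=s_{\epsilon_2}s_{\epsilon_1}\bigl(\la_{[\ell]}+\ell(\delta+\epsilon_1)\bigr)\in W_{\la_{[\ell]}}(\la_{[\ell]}+\Z\alpha),
\]
so Theorem \ref{thm::blocks} finishes the first assertion.

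The argument for $\la'_{[\ell]}=[-\ell/2\mid 0,-\ell/2,\ell/2]$ is entirely parallel: here $\delta+\epsilon_2\in A(\la'_{[\ell]})$ (since $(\la'_{[\ell]},\epsilon_2)=-\ell$), and
\[
\la'_{[\ell]}+\ell(\delta+\epsilon_2)=[\ell/2\mid -\ell/2,\ell/2,0].
\]
One applies the opposite cyclic shift $s_{\epsilon_1}s_{\epsilon_2}$, which sends $[x,y,z]$ to $[z,x,y]$, to recover $[\ell/2\mid 0,-\ell/2,\ell/2]$, and concludes again by Theorem \ref{thm::blocks}.

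There is no real obstacle here; the only substantive point is the bookkeeping in the symbol notation (conversion between $(d,\omega_1,\omega_2)$-coordinates and symbols, plus the explicit description of the $W_{G_2}$-action on symbols), which makes the verifications of $\alpha\in A(\la)$ and of the $S_3$-orbit identification straightforward calculations.
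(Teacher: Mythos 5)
Your argument is correct and essentially identical to the paper's: both invoke Theorem \ref{thm::blocks} after checking that $\delta+\epsilon_1\in A(\la_{[\ell]})$ (resp.\ $\delta+\epsilon_2\in A(\la'_{[\ell]})$), compute $\la_{[\ell]}+\ell(\delta+\epsilon_1)=[\ell/2\mid\ell/2,-\ell/2,0]$ (resp.\ $\la'_{[\ell]}+\ell(\delta+\epsilon_2)=[\ell/2\mid-\ell/2,\ell/2,0]$), and act by an element of $W_{\la_{[\ell]}}^1$. The only cosmetic difference is that you use the cyclic shift $s_{\epsilon_2}s_{\epsilon_1}$ (resp.\ $s_{\epsilon_1}s_{\epsilon_2}$) while the paper uses the single reflection $s_2=s_{\epsilon_1}$ (resp.\ $s_{\epsilon_2}$); these agree on the vector in question because it has a nontrivial stabilizer in $S_3$, so both choices are valid.
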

\begin{proof}
Since $(\la_{[\ell]},\alpha)=0$, $\alpha=\delta+\vare_1$, we need to show that $[{\ell}/{2}|- {\ell}/{2}, 0,{\ell}/{2}]\in W_{\la_{[\ell]}}(\la_{[\ell]}+\Z\alpha)$.

Indeed, we have $[{\ell}/{2}|- {\ell}/{2}, 0,{\ell}/{2}] =  s_2(\la_{[\ell]}+\ell\alpha)$.

Similarly, we have $(\la'_{[\ell]},\beta)=0$ with $\beta=\delta+\vare_2$, and so we need to show that $[{\ell}/{2}|0,- {\ell}/{2}, {\ell}/{2}]\in W_{\la'_{[\ell]}}(\la'_{[\ell]}+\Z\beta)$. Here we observe that $[{\ell}/{2}|0,- {\ell}/{2},{\ell}/{2}] =  s_{\epsilon_2}(\la'_{[\ell]}+\ell\beta)$.
\end{proof}
	
\begin{prop} Let $\la\in\h^\ast$ be atypical with $W_\la^1\cong S_3$.
\begin{itemize}
	\item[(1)] Then there exists $\ell\in \N$ such that either $\la \in \emph{Irr}\mc O_{\la_{[\ell]}}$ or  $\la \in \emph{Irr}\mc O_{\la'_{[\ell]}}$.
	\item[(2)] The sets $\emph{Irr}\mc O_{\la_{[\ell]}},  \emph{Irr}\mc O_{\la_{[k]}},$ and $\emph{Irr}\mc O_{\la'_{[k]}}$ are distinct for all $\ell\neq k$.
	\item[(3)] The twisting functor $T_{s_1}: \mc O_{\la_{[\ell]}}\rightarrow \mc O_{\la'_{[\ell]}}$ is an equivalence of highest weight categories.
\end{itemize}
\end{prop}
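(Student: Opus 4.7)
My plan is to address the three claims in sequence, using throughout the symbol notation from Section~\ref{subsect::simplesys} together with Theorem~\ref{thm::blocks}.

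For part~(1), I first note that under $W_\la^1 = \langle s_{\vare_1}, s_{\vare_2}, s_{-\vare_3}\rangle$ the orbit of a triple $(x,y,z)$ consists of the six signed permutations $(x,y,z), (z,x,y), (y,z,x), (-x,-z,-y), (-z,-y,-x), (-y,-x,-z)$. Since $\la = [d|x,y,z]$ is atypical, $d$ equals $\pm$ one of $x, y, z$, and a suitable $w \in W_\la^1$ brings $\la$ to the canonical form $[d|d, y, z]$. Here $y \neq z$ necessarily, for $y = z$ would force $a = 2(y-d)/3$ to lie in $\Z$ and contradict $W_\la^1 \cong S_3$. Setting $\ell := 2|z-y|$, I would then verify by direct substitution, using
\[
\la_{[\ell]} + k(\delta+\vare_1) = [-\ell/2+k \,|\, -\ell/2+k,\, -k/2,\, \ell/2-k/2],
\]
that $\la = \la_{[\ell]} + (-2y)(\delta+\vare_1)$ when $z > y$, and analogously that $\la = (s_{\vare_1}s_{\vare_2})^2\bigl(\la'_{[\ell]} + (-2z)(\delta+\vare_2)\bigr)$ when $y > z$. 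By Theorem~\ref{thm::blocks} this places $\la$ in the asserted block. The congruence $\ell \not\equiv 0 \pmod 3$ follows from the condition $W_\la^1 \cong S_3$, which forces $a \in \tfrac{1}{3}\Z \setminus \Z$, hence $2a \notin \Z$; combined with the identity $\ell = 3|2a+b|$ and $b \in \Z$ this gives $3 \nmid \ell$.

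For part~(2), the Casimir computation of Section~\ref{subsubsect::Cas} yields $c_{\la_{[\ell]}} = c_{\la'_{[\ell]}} = \ell^2/6 - (\rho,\rho)$, which already distinguishes blocks with different values of $\ell$. To separate $\mc O_{\la_{[\ell]}}$ from $\mc O_{\la'_{[\ell]}}$ at the same $\ell$, I would introduce the signed invariant $y - z$ read off in canonical form: translation by the atypical isotropic root $\delta + \vare_1$ shifts $y$ and $z$ by the same amount and so preserves this invariant, while the elements of $W_{\la_{[\ell]}}$ that keep the weight in canonical form (essentially only the identity and, when $\ell$ is odd, $s_0 s_{\vare_1}$) also preserve it. Since the invariant equals $-\ell/2$ for $\la_{[\ell]}$ and $+\ell/2$ for $\la'_{[\ell]}$ (after the canonical form of $\la'_{[\ell]}$ is obtained via $(s_{\vare_1}s_{\vare_2})^2$), the two blocks are disjoint.

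For part~(3), the computation
\[
s_1 \la_{[\ell]} = s_1 [-\ell/2|-\ell/2, 0, \ell/2] = [-\ell/2|0, -\ell/2, \ell/2] = \la'_{[\ell]}
\]
identifies source and target. The good-diagram hypothesis of Lemma~\ref{lem::tequiv} requires $\alpha_1 \notin Z(\la_{[\ell]})$, equivalently $\langle \la_{[\ell]}, \alpha_1^\vee\rangle = a = \ell/3 \notin \Z$, which holds because $3 \nmid \ell$. Lemma~\ref{lem::tequiv} then yields the desired equivalence of highest weight categories.

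The principal obstacle is part~(2): $\la_{[\ell]}$ and $\la'_{[\ell]}$ share the same central character, so no coarse invariant such as Casimir separates them. The remedy via the signed canonical-form invariant $y - z$ requires a careful verification that it is genuinely block-invariant, amounting to checking that no combination of a $W_{\la_{[\ell]}}$-action and a translation by $\Z(\delta+\vare_1)$ can transport a canonical form with $y - z = -\ell/2$ to one with $y - z = +\ell/2$.
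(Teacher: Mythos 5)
Your overall strategy matches the paper's: reduce to a canonical form via $W_\la^1$ in part (1), appeal to Lemma~\ref{lem::tequiv} in part (3), and use central-character data plus the explicit description of blocks for part (2). Parts (1) and (3) are essentially the paper's argument. For part (1), the paper does a three-way case split on $d=b/2$, $(k+b)/2$, or $-k/2-b$ with $k=3a$, while you normalize to the single case $x=d$ and then split on $\mathrm{sgn}(z-y)$; both reductions are equivalent. (Minor point: your inline justification ``$y=z$ forces $a=2(y-d)/3\in\Z$'' is not immediate from that formula alone. What actually closes it is that $y=z$ gives $2a+b=0$, hence $a=-b/2\in\tfrac12\Z$, and combined with $3a\in\Z$ from the $S_3$-condition this forces $a\in\Z$.)

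The genuine gap is in part (2), and you flag it yourself. Your proposed ``signed canonical-form invariant $y-z$'' requires both well-definedness and block-invariance. Well-definedness is not automatic: the canonical form is not unique whenever symbol coordinates coincide. The stabilizer in $W_\la^1\cong S_3$ of the first symbol coordinate is $\{e\}$ generically, but when $x=-z$ (i.e.\ the weight is atypical with respect to both $\delta+\vare_1$ and $\delta-\vare_3$) the element $s_{\vare_2}$ also preserves $x$ and a priori replaces $y-z$ by $x-y$; one must check these two agree on exactly the degenerate weights where the ambiguity arises, and that all other degenerate configurations either do not occur under $W_\la^1\cong S_3$ or likewise cause no conflict. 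This is doable, but it is more work than what the paper actually invokes. The paper's one-line ``Part (2) is a consequence of Theorem~\ref{thm::blocks}'' unpacks to a finite direct check: by Theorem~\ref{thm::blocks}, $\mathrm{Irr}\,\mc O_{\la_{[\ell]}}=W_{\la_{[\ell]}}\bigl(\la_{[\ell]}+\Z(\delta+\vare_1)\bigr)$, and since $W_{G_2}$ fixes the $\delta$-coordinate while $s_0$ only flips its sign, matching the $\delta$-coordinate $-\ell/2$ of $\la'_{[\ell]}$ forces $k=0$ or (when $\ell$ is odd) $k=\ell$; in both cases one is left with the three-element $S_3$-orbit $\{(-\ell/2,0,\ell/2),\ (\ell/2,-\ell/2,0),\ (0,\ell/2,-\ell/2)\}$ of symbol triples, which does not contain $(0,-\ell/2,\ell/2)$ for $\ell\not=0$. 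That replaces the invariant argument with a short explicit computation and is the route you should use to complete part (2).
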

\begin{proof}
	We first prove $(1).$ It is shown in the proof of Theorem \ref{thm::desnonblocks} that if $\eta$ is   a short root and $\gamma $ is  a long root, then $s_{\eta}\in W_\la$ implies  $s_\gamma \notin W_\la$.  By   \eqref{eq::Weylgpcoroot} we have $b\in \Z$, $3a+b\in \Z$, $3a+2b\in \Z$, $a\notin \Z$, $a+b\notin \Z$ and $2a+b\notin \Z$. This implies that $a\notin \frac{\Z}{2}$ and $a\in \frac{\Z}{3}$. We set $k \in \Z$ such that $k \not \equiv 0 (\text{ mod $3$})$. Now, we have $$\la = \left[d~\middle|~ {b}/{2}, ~(k+b)/2,~-{k}/{2}-b \right].$$
	
	Since $\la$ is atypical, we have the following possibilities $d=\pm \frac{b}{2},~\pm(\frac{k}{2} +\frac{b}{2}),~\pm(\frac{k}{2}+b)$.

    Using the Weyl group $W_\la^1$ if necessary, we may assume that $d= \frac{b}{2}$, $\frac{k+b}{2}$, or $-\frac{k}{2}-b$.
    We will proceed with the proof case by case.

\begin{enumerate}
	\item[(a)] Suppose that $d=\frac{b}{2}.$ In this case we have \begin{align*}
	&\la-(2b+k)[1|1,~-{1}/{2},~-{1}/{2}] =[-{3b}/{2}-k|~-{3b}/{2}-k,~ {3b}/{2}+k,~0]=[{\ell}/{2}|~{\ell}/{2},~- {\ell}/{2},~0].
	\end{align*} where $\ell=-3b-2k$. Observe that $\ell\not \equiv 0 (\text{mod }3)$. If $\ell >0$ then
	$$s_2[{\ell}/{2}|~{\ell}/{2},~- {\ell}/{2},~0] = [{\ell}/{2}|~-{\ell}/{2},0~,~ {\ell}/{2}],$$
	implies that $\la \in \text{Irr}\mc O_{\la_{[\ell]}}$ by Lemma \ref{lem::61}. If $\ell <0$ then $$s_{\vare_2}[{\ell}/{2}|~{\ell}/{2},~- {\ell}/{2},~0] = [{\ell}/{2}|~0,{\ell}/{2},~ -{\ell}/{2}] =\la'_{[-\ell]}.$$
	\item[(b)]
	Suppose that $d= (k+b)/{2}$. Then
	\begin{align*}
	&\la +b[1|~- {1}/{2},1, - {1}/{2}] = [\ell| ~0, \ell,~-\ell],
	\end{align*} where $\ell=  {k}/{2}+ {3b}/{2}$ with $\ell\not \equiv 0(\text{ mod }3)$. If $\ell <0$ then  $[\ell| ~0, \ell,~-\ell] = \la'_{-2\ell}.$ If $\ell>0$ then $$s_{-\vare_3}[\ell| ~0, \ell,~-\ell] = [\ell| -\ell, 0,~\ell].$$ By Lemma \ref{lem::61}, $\la \in \text{Irr}\mc O_{\la_{2\ell}}.$
	\item[(c)]
	Supoose that $d=-\frac{k}{2}-b$. Then \begin{align*}
	&\la+(k +b)[1|- {1}/{2},~- {1}/{2},~1] =[ {k}/{2}|- {k}/{2},~0, {k}/{2}].
	\end{align*} If $k>0$ then the weight above is linked to $\la_{[k]}$ by Lemma \ref{lem::61}.  If $k<0$, then we note that
	 \[s_{-\vare_3}[ {k}/{2}|- {k}/{2},~0, {k}/{2}] = [ {k}/{2}|~0,~{k}/{2}, -{k}/{2}]= \la'_{[-k]}. \] By Lemma \ref{lem::61}, $\la \in \text{Irr}\mc O_{\la_{[-k]}}.$
\end{enumerate}
	This completes the proof of the Part $(1)$.

Part (2) is a consequence of Theorem \ref{thm::blocks}, while Part (3) is a consequence of Lemma \ref{lem::tequiv}.
\end{proof}

\subsection{Tilting characters}

 %------------------------------------------------------------------------------------------------------------------------------------------
\subsubsection{Case of $\ell$ even}
Let $\ell\in2\mathbb{N}\setminus 3\mathbb{N}$ and $k\in\mathbb{Z}$. Now $W_{\la_{[\ell]}}=S_3=\langle s_{\epsilon_1},s_{\epsilon_2}\rangle$. Denote by
${}_\ell\la_k$ the unique antidominant weight in $S_3([-\frac{\ell}{2}+k|-\frac{\ell}{2}+k,-\frac{k}{2},\frac{\ell}{2}-\frac{k}{2}])$.
Let
${}_\ell\la_k^w=w({}_\ell\la_k)$ for any $w\in S_3$.

For any $k\in \mathbb{Z}$, let $Q_k$ be the set of minimal length representatives of the left coset of $Q^{(k)}$ in $S_3$, where $Q^{(k)}=\{w\in S_3~|~w({}_\ell\la_k)={}_\ell\la_k\}$.

{Denote
\begin{equation}\label{eq:H}
  H_1=\{e,s_{\epsilon_1},s_{\epsilon_2}s_{\epsilon_1}\},\quad H_2=\{e,s_{\epsilon_2},s_{\epsilon_1}s_{\epsilon_2}\}.
\end{equation}
In the following theorem we use the Bruhat order ``$\leq$'' on $S_3$ by regarding $s_{\epsilon_1}$ and $s_{\epsilon_2}$ as simple reflections.}
The formulas below can be obtained by applying the translation functor, associated with tensoring with the adjoint module, to $T_{\lambda-2\delta}$, $T_{\lambda-(\delta-\epsilon_3)}$ or $T_{\lambda-(\delta+\epsilon_2)}$.

\begin{thm} Let $\ell\in2\mathbb{N}\setminus 3\mathbb{N}$ and $k\in\mathbb{Z}$.
\begin{itemize}
  \item[(1)] If $k\neq0,1,\frac{\ell}{2},\frac{\ell}{2}+1,\ell,\ell+1$, then $Q_k=S_3$. For any $w\in S_3$,
  \begin{equation*}
    T_{{}_\ell\la_k^w}=\sum_{\sigma\leq w}(M_{{}_\ell\la_k^{\sigma}}+M_{{}_\ell\la_{k-1}^{\sigma}}).
  \end{equation*}
  \item[(2)] If $k=0$ or $k=\ell\neq2$, then $Q_k=H_1$. For any $w\in H_1$,
      \begin{equation*}
    T_{{}_\ell\la_k^w}=\sum_{\sigma\in H_1,\sigma\leq w}M_{{}_\ell\la_k^{\sigma}}+\sum_{\sigma\leq ws_{\epsilon_2}}M_{{}_\ell\la_{k-1}^{\sigma}}.
  \end{equation*}
  \item[(3)] If $k=\frac{\ell}{2}$ with $\ell\neq2$, then $Q_k=H_2$. For any $w\in Q_k=H_2$,
      \begin{equation*}
    T_{{}_\ell\la_k^w}=\sum_{\sigma\in H_2, \sigma\leq w} M_{{}_\ell\la_k^{\sigma}}+\sum_{\sigma \leq ws_{\epsilon_1}}M_{{}_\ell\la_{k-1}^{\sigma}}.
  \end{equation*}
  \item[(4)] Assume $k=1$ with $\ell\neq2$, or, $k=\ell+1$. In this case, $Q_k=S_3$. If $w\in H_1$, then
  $$ T_{{}_\ell\la_k^w}=\sum_{\sigma\leq w}M_{{}_\ell\la_k^{\sigma}}+\sum_{\sigma\in H_1,\sigma\leq w}M_{{}_\ell\la_{k-1}^{\sigma}}+\sum_{\sigma\leq w}M_{{}_\ell\la_{k-2}^{\sigma}};$$
  If $w\in S_3\backslash H_1$, then
  $$ T_{{}_\ell\la_k^w}=\sum_{\sigma\leq w}M_{{}_\ell\la_k^{\sigma}}+\sum_{\sigma\in H_1,\sigma\leq w}M_{{}_\ell\la_{k-1}^{\sigma}}.$$

  \item[(5)] Assume $k=\frac{\ell}{2}+1$ with $\ell\neq2$. In this case, $Q_k=S_3$. If $w\in H_2$, then
  $$ T_{{}_\ell\la_k^w}=\sum_{\sigma\leq w}M_{{}_\ell\la_k^{\sigma}}+\sum_{\sigma\in H_2,\sigma\leq w}M_{{}_\ell\la_{k-1}^{\sigma}}+\sum_{\sigma\leq w}M_{{}_\ell\la_{k-2}^{\sigma}};$$
  If $w\in S_3\backslash H_2$, then
  $$ T_{{}_\ell\la_k^w}=\sum_{\sigma\leq w}M_{{}_\ell\la_k^{\sigma}}+\sum_{\sigma\in H_2,\sigma\leq w}M_{{}_\ell\la_{k-1}^{\sigma}}.$$

    \item[(6)] If $\ell=2$ and $k=1$, then $Q_k=H_2$. We have
     \begin{align*}
    T_{_2\lambda_1}=&M_{_2\lambda_1}+M_{_2\lambda_0^{s_{\ep_1}}}+M_{_2\lambda_0}+M_{_2\lambda_{-1}^{s_{\ep_1}}}+M_{_2\lambda_{-1}};\\
    T_{_2\lambda_1^{s_{\ep_2}}}=&M_{_2\lambda_1^{s_{\ep_2}}}+M_{_2\lambda_1}+M_{_2\lambda_0^{s_{\ep_2}s_{\ep_1}}}+M_{_2\lambda_0^{s_{\ep_1}}}+M_{_2\lambda_0}\\
    &+M_{_2\lambda_{-1}^{s_{\ep_2}s_{\ep_1}}}+M_{_2\lambda_{-1}^{s_{\ep_2}}}+M_{_2\lambda_{-1}^{s_{\ep_1}}}+M_{_2\lambda_{-1}};\\
    T_{_2\lambda_1^{s_{\ep_1}s_{\ep_2}}}=&M_{_2\lambda_1^{s_{\ep_1}s_{\ep_2}}}+M_{_2\lambda_1^{s_{\ep_2}}}+M_{_2\lambda_1}+M_{_2\lambda_0^{s_{\ep_2}s_{\ep_1}}}+M_{_2\lambda_0^{s_{\ep_1}}}+M_{_2\lambda_0}.
  \end{align*}

  \item[(7)] If $\ell=k=2$, then $Q_k=H_1$. We have
  \begin{align*}
    T_{_2\lambda_2}&=M_{_2\lambda_2}+M_{_2\lambda_1^{s_{\ep_2}}}+M_{_2\lambda_1}+M_{_2\lambda_0};\\
    T_{_2\lambda_2^{s_{\ep_1}}}&=M_{_2\lambda_2^{s_{\ep_1}}}+M_{_2\lambda_2}+M_{_2\lambda_1^{s_{\ep_1}s_{\ep_2}}}+M_{_2\lambda_1^{s_{\ep_2}}}+M_{_2\lambda_1}+M_{_2\lambda_0^{s_{\ep_1}}}+M_{_2\lambda_0};\\
    T_{_2\lambda_2^{s_{\ep_2}s_{\ep_1}}}&=M_{_2\lambda_2^{s_{\ep_2}s_{\ep_1}}}+M_{_2\lambda_2^{s_{\ep_1}}}+M_{_2\lambda_2}+M_{_2\lambda_1^{s_{\ep_1}s_{\ep_2}}}+M_{_2\lambda_1^{s_{\ep_2}}}+M_{_2\lambda_1}.
  \end{align*}
\end{itemize}
   \end{thm}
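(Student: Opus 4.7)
The plan is to implement the translation functor strategy outlined in Section \ref{subsect::ConJanFor}, following the template indicated in the remark immediately preceding the theorem statement. For each case, one chooses an initial tilting module which coincides with a Verma module, namely $T_{\la-2\delta}=M_{\la-2\delta}$, $T_{\la-(\delta-\ep_3)}=M_{\la-(\delta-\ep_3)}$, or $T_{\la-(\delta+\ep_2)}=M_{\la-(\delta+\ep_2)}$, and then applies the translation functor (tensoring with the $31$-dimensional adjoint $\mf g$-module $F$ and projecting onto the block $\mc O_{{}_\ell\la_k}$) to read off the desired tilting module as a direct summand. Because the tilting module is determined up to isomorphism by the multiplicities $(T:M_\nu)$ and $F\otimes M_\mu$ has an explicit Verma flag indexed by the weights of $F$ shifted by $\mu$, this reduces each assertion to (i) a positivity statement, supplied by Proposition \ref{prop:flags}, and (ii) an upper-bound statement, supplied by BGG reciprocity $(T_{-\la}:M_{-\mu})=[M_\mu:L_\la]$ together with the Jantzen sum formula of Proposition \ref{prop::JSF}.

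For the generic case (1), with $k\notin\{0,1,\tfrac{\ell}{2},\tfrac{\ell}{2}+1,\ell,\ell+1\}$, the weight ${}_\ell\la_k$ is regular for $W_{\la_{[\ell]}}\cong S_3$, so $Q_k=S_3$ and all six $S_3$-conjugates are distinct. Starting from $T_{{}_\ell\la_k-2\delta}=M_{{}_\ell\la_k-2\delta}$ I would expand $F\otimes M_{{}_\ell\la_k-2\delta}$ as a sum of Verma modules over the weights of $F$, project onto the block, and match characters. Iterating parts (1)--(2) of Proposition \ref{prop:flags} along a reduced expression for $w\in S_3$ yields the inclusion $(T_{{}_\ell\la_k^w}:M_{{}_\ell\la_k^{\sigma}})$, $(T_{{}_\ell\la_k^w}:M_{{}_\ell\la_{k-1}^{\sigma}})\ge 1$ for all $\sigma\le w$, and a character count forces equality with no extra summands. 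For the boundary cases (2)--(5) the weight ${}_\ell\la_k$ sits on one wall of the $S_3$-chamber, so $Q_k$ shrinks to $H_1$ or $H_2$ of \eqref{eq:H}; the appropriate initial weight may need to be shifted by $-(\delta-\ep_3)$ or $-(\delta+\ep_2)$ instead of $-2\delta$ so that the starting Verma is still tilting and crosses the wall from the desired direction. Cases (4) and (5) split further into two sub-formulas depending on whether $w\in H_i$ or $w\notin H_i$, which reflects the one-sided character of translation through a single wall and is obtained by comparing the three choices of initial shift.

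The hardest part will be the exceptional low-rank cases (6) and (7) with $\ell=2$. Here the weights ${}_2\la_0,\,{}_2\la_1,\,{}_2\la_2$ and their $S_3$-conjugates are packed closely together, so several walls interact simultaneously and a single application of the translation functor no longer separates the summands cleanly; moreover, the atypical direction $\delta+\ep_1$ comes into play through the isotropic-reflection parts (3)--(8) of Proposition \ref{prop:flags}, not just the even-reflection parts (1)--(2). In these cases I would combine two iterated translations with a careful bookkeeping of which tilting summands are forced to appear by Proposition \ref{prop:flags} and which would-be summands are excluded by the precise Jantzen filtration of the starting Verma via Proposition \ref{prop::JSF}. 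Ensuring sharpness of the listed Verma multiplicities, rather than only a lower bound, is expected to be the main technical bottleneck, especially for the asymmetric characters such as $T_{{}_2\la_1^{s_{\ep_2}}}$ and $T_{{}_2\la_2^{s_{\ep_2}s_{\ep_1}}}$ whose shape is dictated by the delicate interplay between the three walls of $S_3$ and the atypical root $\delta+\ep_1$.
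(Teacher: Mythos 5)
Your proposal follows exactly the paper's method: apply the adjoint-module translation functor to an initial tilting module $T_{\lambda-2\delta}$, $T_{\lambda-(\delta-\epsilon_3)}$, or $T_{\lambda-(\delta+\epsilon_2)}$ that equals a Verma module, use Proposition~\ref{prop:flags} to force the required Verma factors into $T_{{}_\ell\la_k^w}$, and then match characters against the Verma flag of the translated module to pin down the multiplicities. This is precisely the recipe the paper records in the remark preceding the theorem, and your discussion of the wall cases (2)--(5) and the low-rank cases (6)--(7) with $\ell=2$ correctly identifies where the choice of starting shift and the atypical direction matter.
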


%-----------------------------------------------
\subsubsection{Case of $\ell$ odd}
Let $\ell\in2\mathbb{N}+1\setminus 3\mathbb{N}$. Now $W_{\la_{[\ell]}}=\mathbb{Z}_2\times S_3=\langle s_{0}, s_{\epsilon_1},s_{\epsilon_2}\rangle$. Denote by
${}_\ell\la_k$ the unique antidominant weight in $(\mathbb{Z}_2\times S_3)([-\frac{\ell}{2}+k|-\frac{\ell}{2}+k,-\frac{k}{2},\frac{\ell}{2}-\frac{k}{2}])$ and let
${}_\ell\la_k^w=w({}_\ell\la_k)$ for any $w\in \mathbb{Z}_2\times S_3$. Since $_\ell\la_k= {_\ell\la_{\ell-k}}$, for the computation of characters below it is enough to consider the case when $\frac{\ell}{2}>k\in\Z$.

For any $\frac{\ell}{2}>k\in\Z$, let $Q_k$ be the set of minimal length representatives of the left coset of $Q^{(k)}$ in $\mathbb{Z}_2\times S_3$, where $Q^{(k)}=\{w\in \mathbb{Z}_2\times S_3~|~w({}_\ell\la_k)={}_\ell\la_k\}$.

{Recall the sets $H_1$ and $H_2$ in \eqref{eq:H}. Let $$J=s_0S_3,\quad J_1=s_0H_1=\{s_0,s_0s_{\epsilon_1},s_0s_{\epsilon_2}s_{\epsilon_1}\}, \quad J_2=s_0H_2=\{s_0,s_0s_{\epsilon_2},s_0s_{\epsilon_1}s_{\epsilon_2}\}.$$
In the following theorem, we use the Bruhat order ``$\leq$'' on $\mathbb{Z}_2\times S_3$ by regarding $s_0$, $s_{\ep_1}$ and $s_{\ep_2}$ as simple reflections.}
The formulas below can be obtained by applying the translation functor, associated with tensoring with the adjoint module, to $T_{\lambda-2\delta}$.

\begin{thm}
Let $\ell\in2\mathbb{N}+1\setminus 3\mathbb{N}$, $\frac{\ell}{2}>k\in\mathbb{Z}$ and $w\in Q_k$.
\begin{itemize}
\item[(1)] Assume $k\neq0,\pm1,\frac{\ell-1}{2}$. In this case, $Q_k=\mathbb{Z}_2\times S_3$. If $w\in S_3$, then
\begin{equation}\label{eq:1}
  T_{_\ell\lambda_k^w}=\sum_{\sigma\leq w}(M_{_\ell\lambda_k^\sigma}+M_{_\ell\lambda_{k-1}^\sigma});
\end{equation}
if $w\in s_0S_3$, then
\begin{equation}\label{eq:2}
  T_{_\ell\lambda_k^w}=\sum_{\sigma\leq w}(M_{_\ell\lambda_k^\sigma}+M_{_\ell\lambda_{k+1}^\sigma}).
\end{equation}

\item[(2)] Assume $k=\frac{\ell-1}{2}$ with $\ell\neq1$. In this case, $Q_k=\mathbb{Z}_2\times S_3$. If $w\in S_3$, then the character of $T_{_\ell\lambda_k^w}$ is given by the same formula as the one in \eqref{eq:1}. If $s_0w\in J_2$ (where $w\in H_2$), then
    $$T_{{}_\ell\la_{\frac{\ell-1}{2}}^{s_0w}}=\sum_{\sigma\leq s_0w~\mbox{\tiny or}~ws_{\ep_1}}T_{{}_\ell\la_{\frac{\ell-1}{2}}^{\sigma}}+\sum_{\sigma\leq w}T_{{}_\ell\la_{\frac{\ell-3}{2}}^{\sigma}}.$$
    If $w\in s_0S_3\setminus J_2$, then
$$T_{{}_\ell\la_k^w}=\sum_{\sigma \leq w}M_{{}_\ell\la_k^{\sigma}}.$$

\item[(3)] Assume $k=1$. In this case, $Q_k=\mathbb{Z}_2\times S_3$. If $w\in H_1$, then
$$T_{{}_\ell\la_1^w}=\sum_{\sigma\leq w}M_{{}_\ell\la_1^\sigma}+\sum_{\sigma\in H_1, \sigma\leq w}M_{{}_\ell\la_0^\sigma}+\sum_{\sigma\leq w}M_{{}_\ell\la_{-1}^\sigma}.$$
If $w\in S_3\setminus H_1$, then
$$T_{{}_\ell\la_1^w}=\sum_{\sigma\leq w}M_{{}_\ell\la_1^\sigma}+\sum_{\sigma\in H_1, \sigma\leq w}M_{{}_\ell\la_0^\sigma}.$$
If $w\in s_0S_3$, then the character of $T_{_\ell\lambda_k^w}$ is given by the same formula as the one in \eqref{eq:2}.

\item[(4)] Assume $k=-1$. In this case, $Q_k=\mathbb{Z}_2\times S_3$. If $w\in J_1$, then
$$T_{{}_\ell\la_{-1}^w}=\sum_{\sigma\leq w}M_{{}_\ell\la_1^\sigma}+\sum_{\sigma\in J_1\sqcup H_1, \sigma\leq w}M_{{}_\ell\la_0^\sigma}+\sum_{\sigma\leq w}M_{{}_\ell\la_{1}^\sigma}.$$
If $w\in s_0S_3\setminus J_1$, then $$T_{{}_\ell\la_1^w}=\sum_{\sigma\leq w}M_{{}_\ell\la_1^\sigma}+\sum_{\sigma\in J_1\sqcup H_1, \sigma\leq w}M_{{}_\ell\la_0^\sigma}.$$
If $w\in S_3$, then the character of $T_{_\ell\lambda_k^w}$ is given by the same formula as the one in \eqref{eq:1}.

\item[(5)] Assume $k=0$ with $\ell\neq1$. In this case, $Q_k=J_1\sqcup H_1$. If $w\in H_1$, then
\begin{equation}\label{eq:3}
T_{{}_\ell\la_k^w}=\sum_{\sigma\in H_1, \sigma \leq w}M_{{}_\ell\la_k^{\sigma}}+\sum_{\sigma \leq ws_{\epsilon_2}}M_{{}_\ell\la_{k-1}^{\sigma}}.
\end{equation}
If $w\in J_1$, then
$$T_{{}_\ell\la_k^w}=\sum_{\sigma\in J_1\sqcup H_1, \sigma \leq w}M_{{}_\ell\la_k^{\sigma}}+\sum_{\sigma \leq ws_{\epsilon_2}}M_{{}_\ell\la_{k+1}^{\sigma}}.$$

\item[(6)] Assume $k=0$ and $\ell=1$. In this case, $Q_k=J_1\sqcup H_1$. If $w\in J_1$, then \begin{align*}
       T_{_{1}\la_0^{s_0}}=&M_{_{1}\la_0^{s_0}}+M_{_{1}\la_0}
       +M_{_{1}\la_0^{s_{\epsilon_1}}}+M_{_{1}\la_0^{s_{\epsilon_2}s_{\epsilon_1}}}
       +M_{_{1}\la_{-1}}+M_{_{1}\la_{-1}^{s_{\epsilon_1}}}+M_{_{1}\la_{-1}^{s_{\epsilon_2}}}+M_{_{1}\la_{-1}^{s_{\epsilon_2}s_{\epsilon_1}}},\\
      T_{_{1}\la_{0}^{s_0s_{\epsilon_1}}}=&M_{_{1}\la_{0}^{s_0s_{\epsilon_1}}}+M_{_{1}\la_{0}^{s_0}}+
      M_{_{1}\la_{0}^{s_{\epsilon_1}}}+M_{_{1}\la_{0}}+M_{_{1}\la_{0}^{s_{\epsilon_2}s_{\epsilon_1}}}
      +M_{_{1}\la_{-1}}+M_{_{1}\la_{-1}^{s_{\epsilon_1}}},\\
       T_{_{1}\la_0^{s_0s_{\epsilon_2}s_{\epsilon_1}}}=&M_{_{1}\la_0^{s_0s_{\epsilon_2}s_{\epsilon_1}}}+M_{_{1}\la_0^{s_0s_{\epsilon_1}}}+M_{_{1}\la_0^{s_0}}
       +M_{_{1}\la_0^{s_{\epsilon_2}s_{\epsilon_1}}}+M_{_{1}\la_0^{s_{\epsilon_1}}}+M_{_{1}\la_0}.
        \end{align*}
        If $w\in H_1$, then the character of $T_{_\ell\lambda_k^w}$ is given by the same formula as the one in \eqref{eq:3}.
 \end{itemize}
\end{thm}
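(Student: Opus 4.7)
The plan is to follow the translation-functor strategy outlined in Section 1.3 and already used successfully in Section \ref{sect::OVcase} and the $\ell$-even case just above. The first step is to verify that for every antidominant weight $\mu = {}_\ell\la_k^w$ appearing on the left-hand sides, the shifted weight $\mu - 2\delta$ sits at a ``generic'' position within its own block, so that $T_{\mu - 2\delta} = M_{\mu - 2\delta}$. Because shifting by $-2\delta$ only decreases $\langle\cdot,(2\delta)^\vee\rangle$ and leaves the $G_2$-coordinates unchanged, this is clear from the description in Theorem \ref{thm::desnonblocks} (iii) and the explicit form of the antidominant representatives in $(\mathbb{Z}_2 \times S_3)\la_{[\ell]}$.

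Second, I would apply the projective exact translation functor $F := \mathrm{pr}_\la \circ (- \otimes \mathfrak{g})$, tensoring with the $31$-dimensional adjoint module and projecting onto the block $\mc O_{{}_\ell\la_k^w}$. Since $F$ preserves Verma flags and takes tilting modules to tilting modules, $F(M_{\mu - 2\delta})$ has a Verma flag and decomposes as a direct sum of indecomposable tiltings, one of which will be $T_{{}_\ell\la_k^w}$. The Verma factors of $F(M_{\mu - 2\delta})$ correspond to those weights of $\mathfrak{g}$ that, when added to $\mu - 2\delta$, produce weights $\sim$-equivalent to $\la$ in the sense of Theorem \ref{thm::blocks}. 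Enumerating these shifts is a finite combinatorial computation: the adjoint $\mathfrak{g}$ has a unique weight $2\delta$ of $(2\delta)^\vee$-eigenvalue $2$, six weights of the form $\delta \pm \vare_i$, and so on, so only a short list of candidate Verma summands needs to be tracked in each block.

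Third, and this is the main computational step, I would use Proposition \ref{prop:flags} to pin down which Verma modules $M_\nu$ are forced to occur in which tilting summand. Parts (1)--(2) govern even Weyl-reflection descents (important here because $s_0$, $s_{\ep_1}$, $s_{\ep_2}$ generate $W_{\la_{[\ell]}}$), while parts (3)--(8) handle the interplay with the atypical isotropic root $\delta+\vare_1 \in A(\la_{[\ell]})$ and its images under $W_{\la_{[\ell]}}$. In the generic range of case (1), the group $Q_k$ acts freely on $_\ell\la_k$, the Bruhat-order formulas \eqref{eq:1}--\eqref{eq:2} follow by pairing each translation shift with the unique minimal-length $\sigma \le w$ that realizes it, and the total multiplicity count matches $\ch F M_{\mu - 2\delta}$ exactly. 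Cases (3), (4) and (2) absorb the ``boundary collisions'' where the translated Verma flag acquires extra factors: the chained criterion in Proposition \ref{prop:flags}(6), (8) forces a third block $M_{_\ell\la_{k \pm 2}^\sigma}$ or the extra $w s_{\ep_1}$/$w s_{\ep_2}$ summand to appear, and the remaining multiplicities are fixed by comparing against $\ch F M_{\mu - 2\delta}$.

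The main obstacle, as in all previous similar computations, will be the genuinely small cases (5) and especially (6), where $\ell = 1$ and stabilizers become nontrivial ($Q^{(k)} \neq \{e\}$, giving $Q_k = J_1 \sqcup H_1$ rather than the full $\mathbb{Z}_2 \times S_3$). In these cases several would-be distinct weights coincide, some $M_{\mu - 2\delta + \eta}$ fail to land in $\mc O_\la$, and the starting point $T_{\mu - 2\delta} = M_{\mu - 2\delta}$ may itself require verification via a second, more careful choice of initial weight. I expect that in case (6) one has to iterate the translation once (say, first using $T_{\mu - (\delta - \ep_3)}$ or a short-root shift as in the proof of the $\mc O_{\nu_{[\ell]}}$ theorem above) and then finish by direct application of Proposition \ref{prop:flags}(7)--(8) to account for the extra primitive vectors, exactly as was done in the corresponding $\ell=1$ boundary case of Section \ref{sect::OVcase}. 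Once those summands are identified, the remaining tilting characters in (5)--(6) are forced by BGG reciprocity and the fact that $F$ is self-adjoint, so no further independent verification is needed.
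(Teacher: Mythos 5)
Your overall strategy---translate a known tilting module $T_{\lambda - 2\delta}$ by tensoring with the adjoint representation and then use Proposition~\ref{prop:flags} to pin down which Verma modules survive in the indecomposable summand $T_\lambda$---is exactly the strategy the paper announces in the one-sentence preamble to this theorem, so the general plan is correct. Two points of concern, though, both fixable within the same framework:

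First, you assert in step one that $T_{\mu - 2\delta} = M_{\mu - 2\delta}$ ``for every antidominant weight $\mu = {}_\ell\la_k^w$,'' with the justification that shifting by $-2\delta$ only decreases $\langle\,\cdot\,,(2\delta)^\vee\rangle$ while leaving the $G_2$-coordinates fixed. Two issues here: the weights ${}_\ell\la_k^w$ are \emph{not} antidominant unless $w = e$ (only ${}_\ell\la_k$ is, by definition); and more importantly, when $w \in s_0 S_3$ the $\delta$-coordinate of $\mu$ is \emph{positive}, so $\mu - 2\delta$ may still have positive $2\delta$-eigenvalue for small $k$, in which case $\mu-2\delta$ is not $s_0$-antidominant in its own $2\delta$-integral block and $T_{\mu-2\delta}\neq M_{\mu-2\delta}$. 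The paper makes no claim that the starting tilting is a Verma module for this theorem (in contrast to the preamble to Theorems \ref{thm:6.3}--\ref{thm:6.4} in the $\Z_2\times\Z_2$ case), and the honest argument needs an induction in which $T_{\lambda-2\delta}$ is determined by an earlier stage of the recursion rather than taken to be $M_{\lambda-2\delta}$ outright. You should make this inductive structure explicit, or at least restrict the ``$T=M$'' claim to the genuinely extremal anchors.

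Second, you suggest that in case (6) one must iterate the translation using a shift such as $T_{\mu - (\delta - \epsilon_3)}$. For the $\ell$-even $S_3$ theorem and for Section \ref{sect::OWG2case} the paper does invoke the alternative shifts $T_{\lambda-(\delta-\epsilon_3)}$ and $T_{\lambda-(\delta+\epsilon_2)}$, but for the present $\ell$-odd statement the paper's preamble declares that \emph{only} $T_{\lambda-2\delta}$ is needed throughout, including case (6). Your plan would very likely produce the correct characters, but it is a heavier hammer than the paper claims is required; a careful enumeration of the Verma factors of $F(T_{\lambda-2\delta})$ together with Proposition~\ref{prop:flags}(1)--(8) should already close case (6). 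Finally, the appeal to BGG reciprocity and self-adjointness of $F$ at the end is a heuristic rather than a step: reciprocity alone does not deliver tilting multiplicities without knowing $[M_\mu:L_\lambda]$, and the paper relies on direct lower bounds from Proposition~\ref{prop:flags} matched against the character of the translated module.
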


%==================================================================================
\section{Character formulas in $\mc O_{W_{G_2}}$} \label{sect::OWG2case}

\subsection{Blocks in $\mc O_{W_{G_2}}$ cases}
 We describe in this section the block decomposition of $\mc O_{W_{G_2}}$.  For each $a\in \Z$, we recall that $$a\omega_1 =a (\vare_1+2\vare_2) =\left[0|~0,~3a/2,~ -3a/2\right].$$

\begin{prop}
  We have
  \begin{align}
  &\mc O_{W_{G_2}} =\bigoplus_{a\in \N} \mc O_{a\omega_1}.
  \end{align}
\end{prop}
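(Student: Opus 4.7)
The plan is to establish three facts: first, that for each $a\in\N$ the block $\mc O_{a\omega_1}$ lies in $\mc O_{W_{G_2}}$; second, that these blocks are pairwise distinct; and third, that every atypical non-integral $\la\in\h^*$ with $W_\la^1=W_{G_2}$ lies in some $\mc O_{a\omega_1}$.

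For the first fact, in the symbol notation $a\omega_1=[0|0,\,3a/2,\,-3a/2]$, one computes $(a\omega_1,\delta+\vare_1)=0$, so $\delta+\vare_1\in A(a\omega_1)$ and $a\omega_1$ is atypical. Since $a\omega_1\in\Z\vare_1+\Z\vare_2$, we have $W_{G_2}\subseteq W_{a\omega_1}$, while $\langle a\omega_1,(2\delta)^\vee\rangle=0\notin\frac{1}{2}+\Z$ shows $s_{2\delta}\notin W_{a\omega_1}$; hence $W_{a\omega_1}=W_{G_2}$ and $a\omega_1$ is non-integral. For the second fact, the central element $\Omega$ of Section~\ref{subsubsect::Cas} acts on $M_{a\omega_1}$ by the scalar $-9a^2$, and since $a\mapsto -9a^2$ is injective on $\N$, the blocks $\mc O_{a\omega_1}$ are pairwise distinct.

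The main work is the third fact. Writing $\la=d\delta+m\vare_1+n\vare_2$ with $m,n\in\Z$, non-integrality combined with $G_2$-integrality forces $d\notin\frac{1}{2}+\Z$. Atypicality yields some $\alpha=\delta+c\vare_i\in A(\la)$ with $c\in\{\pm 1\}$ and $i\in\{1,2,3\}$. Since $W_{G_2}$ acts transitively on the six short roots $\{\pm\vare_1,\pm\vare_2,\pm\vare_3\}$ of $G_2$ and $W_{G_2}\subseteq W_\la$, replacing $\la$ by a suitable $W_\la$-conjugate (which stays in the same block by Theorem~\ref{thm::blocks}) allows us to assume $\alpha=\delta+\vare_1\in A(\la)$. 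The equation $(\la,\delta+\vare_1)=-2d+2m-n=0$ yields $d=m-n/2$; the constraint $d\notin\frac{1}{2}+\Z$ then forces $n$ to be even, and writing $n=2p$ gives $d=m-p\in\Z$. The critical computation is
\begin{align*}
\la-d(\delta+\vare_1)=(m-d)\vare_1+2p\vare_2=p\vare_1+2p\vare_2=p\omega_1.
\end{align*}
Since $\delta+\vare_1\in A(p\omega_1)$ and $d\in\Z$, Theorem~\ref{thm::blocks} yields $\la\in\text{Irr}\,\mc O_{p\omega_1}$. Finally, the longest element $w_0\in W_{G_2}$ acts as $-1$ on $\C\vare_1\oplus\C\vare_2$, so $w_0(p\omega_1)=-p\omega_1$, giving $\mc O_{p\omega_1}=\mc O_{|p|\omega_1}$ with $|p|\in\N$.

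The main obstacle will be merely careful bookkeeping: the delicate point is to trace the non-integrality hypothesis (arising solely in the $2\delta$-direction) through the normalization $\alpha=\delta+\vare_1$, which is exactly what forces $d\in\Z$ and thereby makes $\la-d\alpha$ land at an integer multiple of $\omega_1$ in the $G_2$-weight lattice, ready to be reflected to a non-negative multiple by the longest element of $W_{G_2}$.
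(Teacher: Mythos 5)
Your proof is correct and follows essentially the same strategy as the paper: both isolate the atypical root $\delta+\vare_1$, solve $(\la,\delta+\vare_1)=0$ for $d$, use non-integrality in the $2\delta$ direction to force $d$ and the $\omega_2$-coordinate into $\Z$, and then subtract $d(\delta+\vare_1)$ to land on an integral multiple of $\omega_1$ inside the block by Theorem~\ref{thm::blocks}. The only notable variation is that you deduce pairwise distinctness of the $\mc O_{a\omega_1}$ via the Casimir eigenvalue $-9a^2$ (a valid shortcut through central characters), whereas the paper argues directly by showing $b\omega_1 \in W_{G_2}(a\omega_1+\Z(\delta+\vare_1))$ forces the $\delta$-increment $k=0$ and hence $b=\pm a$; these are interchangeable and of comparable length.
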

\begin{proof}
	We first note that $W_{a\omega_1} =W_{G_2}$ by \eqref{eq::Weylgpcoroot}. Next, let $$\la = D\delta +A\omega_1 +B\omega_2$$ be an atypical weight with  $W_\la^1 = W_{G_2}$. We claim that
	there exists $a\in \N$ such that $\mc O_\la =\mc O_{a\omega_1}$. To see this, we first assume that $(\la,\delta+c\vare_1)$, for $c=\pm 1$. This implies that $D=\pm B/2$. Since $\la$ is non-integral we have by \eqref{eq::Weylgpcoroot} that $D\in \Z$ and so $a:= A+B -cD \in \Z$. It follows that the weight
	\[\la-D(\delta+c\vare_1) = a\omega_1,\] lies in the set $\text{Irr}\mc O_\la$ by Theorem \ref{thm::blocks}.

Suppose $(\la, \delta+c\vare_i) =0$ with $i\neq 1$ for $c=\pm 1$. Since $W_\la = W_{G_2}$, there is $w\in W_\la$ such that $(w\la, \delta+c\vare_1)=0$. This case reduces to the previous case since $\mc O_\la =\mc O_{w\la}$ by Theorem \ref{thm::blocks}.
	
	Finally, we show that for any $a, b\in \Z$,
	  \[\mc O_{a\omega_1}= \mc O_{b\omega_1} \Leftrightarrow  a =\pm b.\] By a direct computation we have
	  \[W_{G_2}\omega_1 = \left\{\pm [0|3/2,-3/2,0], ~\pm [0|0,3/2,-3/2], ~\pm [0|3/2,0,-3/2]\right\}.\]
	  This means that $b\omega_1$ lies in  $W_{G_2}a\omega_1:=\{ax\omega_1|~x\in W_{G_2}\}$ if and only if $b =\pm a$. It follows from Theorem \ref{thm::blocks} that $\mc O_{a\omega_1} =\mc O_{-a\omega_1}$.

	  Suppose that $b\omega_1 \in \text{Irr}\mc O_{a\omega_1}.$ By Theorem \ref{thm::blocks},  there is $k\in \Z$ and $w\in W_{G_2}$ such that $$a\omega_1=w(b\omega_1+k(\delta+\vare_1))= bw\omega_1+kw\vare_1 +k\delta.$$
	  This implies that $k=0$. As a consequence, we have $a\omega_1 =bw\omega_1$ and so $b=\pm a$. This completes the proof.
\end{proof}

\subsection{Tilting characters}
For $\ell\in\mathbb{N}$ and $k\in\mathbb{Z}$, let ${}_{\ell}\lambda_k$ be the unique anti-dominant weight in $W_{G_2}([k~|~k,\frac{3\ell-k}{2},\frac{-3\ell-k}{2}])$. Let $Q_k$ be the set of minimal length representatives of the left cosets $Q^{(k)}$ in $W_{G_2}$, where $Q^{(k)}=\{w\in W_{G_2}~|~w({}_\ell\la_k)={}_\ell\la_k\}$. Let
${}_\ell\la_k^w=w({}_\ell\la_k)$ for any $w\in W_{G_2}$.

{Note $W_{G_2}=\langle s_{1},s_{2}\rangle$. Denote
\begin{align*}
  K_1=\{e,s_1,s_2s_1,s_1s_2s_1,s_2s_1s_2s_1,s_1s_2s_1s_2s_1\};\\
  K_2=\{e,s_2,s_1s_2,s_2s_1s_2,s_1s_2s_1s_2,s_2s_1s_2s_1s_2\}.
\end{align*}
In the following theorem, we use the Bruhat order ``$\leq$'' on $W_{G_2}$ by regarding $s_{1}$ and $s_{2}$ as simple reflections.}

\begin{thm} For any $\ell\in\mathbb{N}$, $k\in\mathbb{Z}$, the following formulas hold.
\begin{itemize}
\item[(1)] If $k=\pm3\ell\neq0$ or $k=0\neq\ell\neq1$, then $Q_k=K_1$. For any $w\in Q_k$, we have
    $$T_{_\ell\lambda_k^w}=\sum_{\sigma\in K_1,\sigma\leq w}M_{{}_\ell\la_k^{\sigma}}+\sum_{\sigma\leq ws_2}M_{{}_\ell\la_{k-1}^{\sigma}}.$$

\item[(2)] If $k=\pm\ell\neq0,1$, then $Q_k=K_2$. For any $w\in Q_k$, we have
    $$T_{_\ell\lambda_k^w}=\sum_{\sigma\in K_2,\sigma\leq w}M_{{}_\ell\la_k^{\sigma}}+\sum_{\sigma\leq ws_1}M_{{}_\ell\la_{k-1}^{\sigma}}.$$

\item[(3)] If $\ell=0$ and $k\neq0,1$, then $Q_k=K_2$. For any $w\in Q_k$, we have
    $$T_{_\ell\lambda_k^w}=\sum_{\sigma\in K_2,\sigma\leq w}(M_{{}_\ell\la_k^{\sigma}}+M_{{}_\ell\la_{k-1}^{\sigma}}).$$

\item[(4)] If $k=\pm3\ell+1$ with $\ell\neq0$, or, $k=1$ with $\ell\neq0,1$, then $Q_k=W_{G_2}$. For any $w\in K_1$, we have
\begin{align*}
    T_{{}_\ell\la_k^w}=\sum_{\sigma\leq w}M_{{}_\ell\la_k^{\sigma}}+\sum_{\sigma\in K_1,\sigma\leq w}M_{{}_\ell\la_{k-1}^{\sigma}}+\sum_{\sigma\leq w}M_{{}_\ell\la_{k-2}^{\sigma}};
  \end{align*}
while for any $w\in W_{G_2}\setminus K_1$, we have
\begin{align*}
    T_{{}_\ell\la_k^w}=\sum_{\sigma\leq w}M_{{}_\ell\la_k^{\sigma}}+\sum_{\sigma\in K_1,\sigma\leq w}M_{{}_\ell\la_{k-1}^{\sigma}}.
\end{align*}

\item[(5)] If $k=\pm\ell+1$ with $\ell\neq0$, then $Q_k=W_{G_2}$. For any $w\in K_2$, we have
\begin{align*}
    T_{{}_\ell\la_k^w}=\sum_{\sigma\leq w}M_{{}_\ell\la_k^{\sigma}}+\sum_{\sigma\in K_2,\sigma\leq w}M_{{}_\ell\la_{k-1}^{\sigma}}+\sum_{\sigma\leq w}M_{{}_\ell\la_{k-2}^{\sigma}};
  \end{align*}
while for any $w\in W_{G_2}\setminus K_2$, we have
\begin{align*}
    T_{{}_\ell\la_k^w}=\sum_{\sigma\leq w}M_{{}_\ell\la_k^{\sigma}}+\sum_{\sigma\in K_2,\sigma\leq w}M_{{}_\ell\la_{k-1}^{\sigma}}.
\end{align*}

\item[(6)] If $k=\ell=1$, then $Q_k=K_2$. For any $w\in Q_k$, we have
$$T_{_1\lambda_1^w}=\sum_{\sigma\in K_2,\sigma\leq w}M_{{}_1\la_1^{\sigma}}+\sum_{\sigma\in K_1,\sigma\leq ws_1}M_{{}_1\la_0^{\sigma}}.$$

\item[(7)] If $\ell=1$ and $k=0$, then $Q_k=K_1$. For any $w\in Q_k$, we have
    $$T_{_\ell\lambda_k^w}=\sum_{\sigma\in K_1,\sigma\leq w}M_{{}_\ell\la_k^{\sigma}}+\sum_{\sigma\in K_2,\sigma\leq ws_2}M_{{}_\ell\la_{k-1}^{\sigma}}.$$

\item[(8)] For $k=\ell=0$ we have $Q_k=\{e\}$ and
$$T_{_0\lambda_0}=M_{_0\lambda_0}+\sum_{\sigma\in K_2}M_{_0\lambda_{-1}^\sigma}.$$

\item[(9)] If $\ell=0$ and $k=1$ then $Q_k=K_2$ and we have
  \begin{align*}
    &T_{_0\la_1}=M_{_0\la_1}+M_{_0\la_0}+M_{_0\la_{-1}^{s_1s_2s_1s_2}}+M_{_0\la_{-1}^{s_2s_1s_2}}+M_{_0\la_{-1}^{s_1s_2}}+M_{_0\la_{-1}^{s_2}}+M_{_0\la_{-1}},\\
     &T_{_0\la_1^{s_2}}=M_{_0\la_1^{s_2}}+M_{_0\la_1}+M_{_0\la_0}+M_{_0\la_{-1}^{s_2s_1s_2}}+M_{_0\la_{-1}^{s_1s_2}}+M_{_0\la_{-1}^{s_2}}+M_{_0\la_{-1}},\\
      &T_{_0\la_1^{s_1s_2}}=M_{_0\la_1^{s_1s_2}}+M_{_0\la_{1}^{s_2}}+M_{_0\la_{1}}+M_{_0\la_0}+M_{_0\la_{-1}^{s_1s_2}}+M_{_0\la_{-1}^{s_2}}+M_{_0\la_{-1}},\\
       &T_{_0\la_1^{s_2s_1s_2}}=M_{_0\la_1^{s_2s_1s_2}}+M_{_0\la_1^{s_1s_2}}+M_{_0\la_1^{s_2}}+M_{_0\la_1}+M_{_0\la_0}+M_{_0\la_{-1}^{s_2}}+M_{_0\la_{-1}},\\
       &T_{_0\la_1^{s_1s_2s_1s_2}}=M_{_0\la_1^{s_1s_2s_1s_2}}+M_{_0\la_1^{s_2s_1s_2}}+M_{_0\la_1^{s_1s_2}}+M_{_0\la_1^{s_2}}+M_{_0\la_1}+M_{_0\la_0}+M_{_0\la_{-1}},\\
        &T_{_0\la_1^{s_2s_1s_2s_1s_2}}=M_{_0\la_1^{s_2s_1s_2s_1s_2}}+M_{_0\la_1^{s_1s_2s_1s_2}}+M_{_0\la_1^{s_2s_1s_2}}+M_{_0\la_1^{s_1s_2}}+M_{_0\la_1^{s_2}}+M_{_0\la_1}+M_{_0\la_0}.\\
  \end{align*}

  \item[(10)] If $k$ and $\ell$ do not satisfy any condition in (1)-(9), then $Q_k= W_{G_2}$. In this case, for any $w\in W_{G_2}$, we have
  \begin{equation*}
    T_{{}_\ell\la_k^w}=\sum_{\sigma\leq w}(M_{{}_\ell\la_k^{\sigma}}+M_{{}_\ell\la_{k-1}^{\sigma}}).
  \end{equation*}
  \end{itemize}
\end{thm}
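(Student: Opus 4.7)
The proof proceeds by translation functors applied to suitably chosen initial tilting modules, using Proposition~\ref{prop:flags} to establish lower bounds on Verma flag multiplicities and character comparison for upper bounds. The standard initial object in each case is a tilting module of the form $T_{{}_\ell\la_k - 2\delta}$, or occasionally $T_{{}_\ell\la_k - (\delta+\epsilon_i)}$ at the wall cases, chosen so that the initial weight is sufficiently antidominant that the tilting module coincides with the Verma module itself. Tensoring with the $31$-dimensional adjoint module $L(2\delta)$ (or the appropriate Kac module) and projecting onto the block of ${}_\ell\la_k$ yields a tilting module whose character is an explicit sum $\sum_\mu \ch M_{{}_\ell\la_k - 2\delta + \mu}$ over weights $\mu$ of the tensor factor that keep the summand in the correct block.

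The second step is to decompose this translated object into its indecomposable tilting summands. For the generic case~(10), where $k$ avoids the six wall values, the stabilizer $Q^{(k)}$ is trivial, Proposition~\ref{prop:flags}(2) gives $(T_{{}_\ell\la_k^w}:M_{{}_\ell\la_k^\sigma})\ge 1$ and $(T_{{}_\ell\la_k^w}:M_{{}_\ell\la_{k-1}^\sigma})\ge 1$ for each $\sigma\le w$ in the Bruhat order on $W_{G_2}$, and the summation over $\sigma\le w$ matches the character count produced by the translation, forcing equality and identifying $T_{{}_\ell\la_k^w}$ as the desired summand. Cases~(1)--(3) correspond to ${}_\ell\la_k$ lying on a single reflection wall (determined by the vanishing of a coroot pairing: $k=\pm 3\ell$ kills $s_{\vare_2}$ or $s_{-\vare_3}$; $k=\pm\ell$ kills $s_{\vare_2-\vare_1}$ or $s_{\vare_1-\vare_3}$; $\ell=0$ kills $s_{\vare_2-\vare_3}$), so the Bruhat sum runs only over $K_1$ or $K_2$, and adjacent Vermas in the translation collapse to give the corresponding coefficient on the $M_{{}_\ell\la_{k-1}^\sigma}$ terms. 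Cases~(4)--(7) handle $k=\pm 3\ell+1$, $k=\pm\ell+1$ and $k=1$: here one must translate across a wall, so three Verma rows appear and the boundary absorption asymmetrically truncates the $K_i$-part; one establishes these by first proving the wall case and then applying one further translation step. Cases~(8)--(9) for $\ell=0$ require direct verification using Proposition~\ref{prop:flags}(3)--(8), since the fully atypical root $\vare_2-\vare_3$ forces extra isotropic corrections.

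The principal obstacle is showing that no spurious Vermas appear beyond those in the claimed formulas. Upper bounds come from the central character and block constraint (Theorem~\ref{thm::blocks}) together with the Casimir eigenvalue analysis of Section~\ref{subsubsect::Cas}: only Vermas $M_\mu$ with $\mu \in \mathrm{Irr}\,\mc O_{\ell\omega_1}$ having the correct $\Omega$-eigenvalue and integral difference from ${}_\ell\la_k$ can enter. The delicate part is matching the total character of the translated object against the sum of characters of $T_{{}_\ell\la_k^w}$ for $w$ ranging over the appropriate subset of $Q_k$, which is an exercise in the Bruhat combinatorics of $D_{12}$; the six distinguished subsets $K_1$, $K_2$, $W_{G_2}$, $\{e\}$ and their complements account for every absorption pattern that arises. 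Once the generic case is in place, the remaining nine special cases reduce to finite combinatorial checks against Proposition~\ref{prop:flags}, and these are carried out case-by-case in the spirit of the corresponding arguments in Sections~\ref{sect::OVcase} and~\ref{sect::OS3case}.
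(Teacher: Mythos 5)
Your overall strategy — translate suitably antidominant initial tilting modules by the adjoint module and bound Verma flag multiplicities below via Proposition~\ref{prop:flags} and above via block/Casimir constraints — matches the paper's; so does the choice of initial objects $T_{\lambda-2\delta}$, $T_{\lambda-(\delta-\epsilon_3)}$, $T_{\lambda-(\delta+\epsilon_2)}$. But there is a genuine gap in how you close the upper bound. Theorem~\ref{thm::blocks} and the Casimir analysis only restrict \emph{which} $M_\mu$ may occur in a Verma flag of the translated object; they say nothing about the \emph{multiplicity} with which a given $M_\mu$ occurs in the indecomposable summand $T_{{}_\ell\la_k^w}$. Proposition~\ref{prop:flags} gives $(T_{{}_\ell\la_k^w}:M_\mu)\ge 1$ for the claimed $\mu$, but without an independent upper bound the ``character count'' and the decomposition into indecomposables cannot be matched, because a multiplicity of $2$ at one Verma could compensate for a $0$ at another while leaving the total translated character unchanged. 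The paper's key ingredient for this, which you never invoke, is Proposition~\ref{prop:comp:factor}: since the simple system $\Pi$ contains a simple system for $G_2$, Jordan--H\"older multiplicities of the form $[M_\la:L_{\tau\la}]$, $\tau\in W_{G_2}$, are computed inside the $G_2$-Verma module $M^{G_2}_{\la^{G_2}}$, where all composition multiplicities equal $1$; by BGG/Soergel reciprocity this forces $(T_\la:M_\mu)\le 1$ for such $\mu$, i.e.\ the tilting characters in Theorem~8.2 are multiplicity-free. Without that reduction to $G_2$ the argument is incomplete.

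A second, more minor inaccuracy: you assert that the initial tilting module ``coincides with the Verma module itself'' in all cases. That is true for the $T_{\la-2\delta}$ starting points, but it fails for the wall-crossing starting points used in the difficult subcases; e.g.\ in the paper's worked example for~(9), $T_{[0|1,-\frac12,-\frac12]}$ has four Verma factors, and $T_{[0|-1,\frac12,\frac12]}$ has three. You should either take these already-established intermediate tilting characters as inputs, or restrict that claim to the $-2\delta$ starting points. Correcting these two points --- adding Proposition~\ref{prop:comp:factor} and being honest about the non-Verma initial tiltings --- brings your outline in line with the paper's.
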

\begin{proof}
All formulas are obtained by applying the translation functor, associated with tensoring with the adjoint module, to $T_{\lambda-2\delta}$, $T_{\lambda-(\delta-\epsilon_3)}$ or $T_{\lambda-(\delta+\epsilon_2)}$. Below we give more details to obtain the formulas in (9) as an example.

Applying the translation functor associated with tensoring with the adjoint module to $$T_{_0\la_1^{s_2s_1s_2s_1s_2}-(\delta+\epsilon_2)}=T_{[0|1,-\frac{1}{2},-\frac{1}{2}]}=
M_{[0|1,-\frac{1}{2},-\frac{1}{2}]}+M_{[0|-1,\frac{1}{2},\frac{1}{2}]}+M_{[0|\frac{1}{2},-1,\frac{1}{2}]}+M_{[0|-\frac{1}{2},-\frac{1}{2},1]},$$ we can obtain the characters of $T_{_0\la_1^{s_2s_1s_2s_1s_2}}$, $T_{_0\la_1^{s_2s_1s_2}}$ and $T_{_0\la_1^{s_2}}$. Here we make use of Propositions~\ref{prop:flags} and \ref{prop:comp:factor}. Indeed, Proposition \ref{prop:comp:factor} and the fact that any composition factor appearing in a $G_2$-Verma module is $1$ imply that all three tilting characters are multiplicity free.

Similarly, if we apply the translation functor to $$T_{_0\la_1^{s_1s_2s_1s_2}-(\delta-\epsilon_3)}=T_{[0|-1,\frac{1}{2},\frac{1}{2}]}=
M_{[0|-1,\frac{1}{2},\frac{1}{2}]}+M_{[0|-\frac{1}{2},-\frac{1}{2},1]}+M_{[0|\frac{1}{2},-1,\frac{1}{2}]},$$ then we get the characters of $T_{_0\la_1^{s_1s_2s_1s_2}}$, $T_{_0\la_1^{s_1s_2}}$ and $T_{_0\la_1}$.
\end{proof}

\appendix{}
\setcounter{secnumdepth}{1}
\section{Tilting characters of $\mathfrak{osp}(3|2)$}
\label{sect::app}

For the sake of completeness, in this appendix, we give the characters of the tilting modules of $\mf{osp}(3|2)$ in the BGG category $\mc O$.

\iffalse
The Lie superalgebra $\mf{osp}(3|2)$ is the contragredient Lie superalgebra associated with the following Cartan matrix (see, e.g., \cite[Chapter 1]{CW12} or \cite[Chapter 2.3]{Mu12}):
\begin{align}
\begin{pmatrix}
0 & -2  \\
-2 & 2
\end{pmatrix}.
\end{align}
The associated Dynkin diagram is depicted as follows:
%\vskip .5cm
\begin{center}
	\begin{tikzpicture}
	\node at (0,0) {$\bigotimes$};
	\draw (0.15,0.08)--(1.2,0.08);
	\draw (0.15,-0.08)--(1.2,-0.08);
	\node at (1.35,0) {$\bigcirc$};
    \node at (2,-0.2) {.};
	\node at (0.75,0) {\Large $>$};
	\node at (0,-.5) {$\delta-\vare$};
	\node at (1.5,-.53) {$\vare$};
	\end{tikzpicture}
\end{center}
\fi

For precise definition and more details of the contragredient simple Lie superalgebra $\mf{osp}(3|2)$ we refer the reader to the standard references, e.g., \cite{Kac77}. In this appendix we shall follow the notation used in \cite[Section 1.3.3]{CW12}. We shall use the simple system $\Pi=\{\delta-\vare,\vare\}$ with positive roots $\Phi^+=\{\delta\pm \vare, \delta, 2\delta,\vare\}$, so that the Weyl vector is
$\rho=-\frac{1}{2}\delta+\frac{1}{2}\epsilon$. Let $(,):\h^\ast \otimes \h^\ast \rightarrow \C$ denote the symmetric bilinear form determined by \[(\delta, \delta)=-1,\quad(\delta,\vare)=0,\quad(\vare, \vare)=1.\]
The Weyl group $W$ is isomorphic to $\Z_2\times\Z_2$.

A ($\rho$-shifted) weight $\lambda$ is typical if $(\la,\delta\pm\ep)\not=0$. It is atypical, if $(\la,\delta+\ep)=0$ or $(\la,\delta-\ep)=0$. In particular, if $\lambda=a\delta+b\epsilon$ with $a,b\in\C$, then it is atypical if and only if $a=\pm b$. Let $\mc O_\la$ denote the block in $\mc O$ containing $L_\la$ the simple module of highest weight $\la-\rho$. We shall use self-explanatory notations for Verma and tilting modules of that highest weight below. Similarly as in Section \ref{sect::preli}, we can define the integral Weyl group $W_\la$ and one can prove that $\text{Irr}\mc O_\la=\{w(\la+k\alpha)|(\la,\alpha)=0,k\in\Z,w\in W_\la\}$, which is an analogue of Theorem \ref{thm::blocks}.

First, we consider the case of $\mc O_\la$, when $\la$ is typical. If $\la$ is strongly typical, i.e., $(\la,\delta)\not=0$, then $\mc O_\la$ is equivalently to a block of underlying Lie algebra by \cite{Gor02b} again. If $\la$ typical, but not strongly typical, then an analogous argument as given in the proof of Theorem \ref{thm::tysty} shows that $\mc O_\la$ is equivalent to a block of $\mf{sl}(2)$. Therefore, the characters of tilting modules in typical blocks are completely determined. Thus, we can restrict ourselves to studying atypical blocks below.

\begin{thm}\label{ch:osp32}
\begin{itemize}
  \item[(1)] If $\lambda=a\delta+a\epsilon$ (respectively $a\delta-a\epsilon$), $(a\not\in\frac{1}{2}\mathbb{Z})$, then $W_\lambda=\{e\}$, $\mathrm{Irr} \mathcal{O}_\lambda=\{\lambda+k\alpha~|~k\in\mathbb{Z}\}$ and $T_\lambda=M_\lambda+M_{\lambda-\alpha}$, where $\alpha=\delta+\epsilon$ (respectively $\delta-\epsilon$).
  \item[(2)] If $\lambda=a\delta\pm a\epsilon$, $(a\in\mathbb{Z})$, then $W_\lambda=\langle s_\epsilon\rangle\cong\mathbb{Z}_2$ and $\mathrm{Irr} \mathcal{O}_\lambda=\{k(\delta\pm\epsilon)~|~k\in\mathbb{Z}\}$. For any $j\in\mathbb{Z}_{>0}$,
      \begin{align*}
        T_0=&M_0+M_{-\delta-\epsilon}+M_{-\delta+\epsilon};\\
        T_{\delta+\epsilon}=&M_{\delta+\epsilon}+M_{\delta-\epsilon}+M_{-\delta+\epsilon}+M_{-\delta-\epsilon}
        +M_{0};\\
        T_{j\delta+j\epsilon}=&M_{j\delta+j\epsilon}+M_{j\delta-j\epsilon}+M_{-j\delta+j\epsilon}+M_{-j\delta-j\epsilon}\\
        &+M_{(j-1)\delta+(j-1)\epsilon}+M_{(j-1)\delta-(j-1)\epsilon}+M_{-(j-1)\delta+(j-1)\epsilon}+M_{-(j-1)\delta-(j-1)\epsilon},\quad(j\neq1);\\
        T_{\delta-\epsilon}=&M_{\delta-\epsilon}+M_{-\delta-\epsilon}+M_{0};\\
        T_{j\delta-j\epsilon}=&M_{j\delta-j\epsilon}+M_{-j\delta-j\epsilon}+M_{(j-1)\delta-(j-1)\epsilon}+M_{-(j-1)\delta-(j-1)\epsilon},\quad(j\neq1);\\
        T_{-j\delta+j\epsilon}=&M_{-j\delta+j\epsilon}+M_{-j\delta-j\epsilon}+M_{-(j+1)\delta+(j+1)\epsilon}+M_{-(j+1)\delta-(j+1)\epsilon};\\
      T_{-j\delta-j\epsilon}=&M_{-j\delta-j\epsilon}+M_{-(j+1)\delta-(j+1)\epsilon}.
      \end{align*}
  \item[(3)] If $\lambda=a\delta\pm a\epsilon$, $(a\in\hf+\mathbb{Z})$, then $W_\lambda=\langle s_\delta,s_\epsilon\rangle\cong\mathbb{Z}_2\times\mathbb{Z}_2$ and $\mathrm{Irr} \mathcal{O}_\lambda=\{(k+\frac{1}{2})(\delta\pm\epsilon)~|~k\in\mathbb{Z}\}$. For any $j\in\hf+\mathbb{Z}_{>0}$,
      \begin{align*}
       T_{\frac{1}{2}\delta+\frac{1}{2}\epsilon}=&M_{\frac{1}{2}\delta+\frac{1}{2}\epsilon}+M_{\frac{1}{2}\delta-\frac{1}{2}\epsilon}+
       M_{-\frac{1}{2}\delta-\frac{1}{2}\epsilon}+M_{-\frac{1}{2}\delta+\frac{1}{2}\epsilon};\\
       T_{\frac{1}{2}\delta-\frac{1}{2}\epsilon}=&M_{\frac{1}{2}\delta-\frac{1}{2}\epsilon}+M_{-\frac{1}{2}\delta+\frac{1}{2}\epsilon}+
       M_{-\frac{1}{2}\delta-\frac{1}{2}\epsilon}+M_{-\frac{3}{2}\delta-\frac{3}{2}\epsilon};\\
       T_{-\frac{1}{2}\delta+\frac{1}{2}\epsilon}=&M_{-\frac{1}{2}\delta+\frac{1}{2}\epsilon}+M_{-\frac{1}{2}\delta-\frac{1}{2}\epsilon}+M_{-\frac{3}{2}\delta+\frac{3}{2}\epsilon}+M_{-\frac{3}{2}\delta-\frac{3}{2}\epsilon};\\
       T_{-\frac{1}{2}\delta-\frac{1}{2}\epsilon}=&M_{-\frac{1}{2}\delta-\frac{1}{2}\epsilon}+M_{-\frac{3}{2}\delta-\frac{3}{2}\epsilon};\\
       T_{j\delta+j\epsilon}=&M_{j\delta+j\epsilon}+M_{j\delta-j\epsilon}+M_{(j-1)\delta+(j-1)\epsilon}+M_{(j-1)\delta-(j-1)\epsilon};\\
       T_{-j\delta+j\epsilon}=&M_{-j\delta+j\epsilon}+M_{-j\delta-j\epsilon}+M_{-(j+1)\delta+(j+1)\epsilon}+M_{-(j+1)\delta-(j+1)\epsilon};\\
      T_{j\delta-j\epsilon}=&M_{j\delta-j\epsilon}+M_{(j-1)\delta-(j-1)\epsilon};\\
      T_{-j\delta-j\epsilon}=&M_{-j\delta-j\epsilon}+M_{-(j+1)\delta-(j+1)\epsilon}.
      \end{align*}
\end{itemize}
\end{thm}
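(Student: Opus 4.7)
The proof strategy parallels the approach developed for $G(3)$ in the main body of the paper, specialized to the simpler rank-two setting of $\mathfrak{osp}(3|2)$. Throughout, the two main tools will be (i) a super Jantzen sum formula analogous to Proposition~\ref{prop::JSF}, and (ii) lower-bound results on Verma-flag multiplicities analogous to Proposition~\ref{prop:flags}, combined with translation functors and BGG reciprocity.

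For part (1), the generic case where $W_\la = \{e\}$ and $\la$ admits a unique isotropic atypical root $\al \in \{\delta \pm \ep\}$, I would apply the Jantzen sum formula for $\mathfrak{osp}(3|2)$, which collapses to $\sum_{i \ge 1} \ch M_\la^i = \frac{\ch M_{\la - \al}}{1 + e^{-\al}}$. Following the argument used for the generic $G(3)$ blocks in Section~\ref{Sect::4}, this forces $\ch L_\la = \frac{\ch M_\la}{1 + e^{-\al}}$, so $\ch M_\la = \ch L_\la + \ch L_{\la - \al}$. Since $\mathrm{Irr}\,\mc O_\la = \{\la + k\al \mid k \in \Z\}$ and $L_\la$ appears only as a composition factor in $M_\la$ and $M_{\la + \al}$, BGG reciprocity $(T_\la : M_\nu) = [M_{-\nu} : L_{-\la}]$ immediately produces the stated formula $T_\la = M_\la + M_{\la - \al}$.

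For parts (2) and (3), I would argue via translation functors. Begin with a strongly typical weight $\mu$ in the same $W_\la^1$-orbit structure as $\la$; by the result of Gorelik, $\mc O_\mu$ is equivalent to a block of $U(\mathfrak{sl}(2) \oplus \mathfrak{sl}(2))$, and hence $T_\mu = \sum_{\sigma \in W_\mu \mu,\, \sigma \le \mu} M_\sigma$ by classical Kazhdan--Lusztig theory. Tensoring $T_\mu$ with a well-chosen finite-dimensional $\mathfrak{osp}(3|2)$-module (such as the natural $5$-dimensional module or the adjoint module) and projecting onto the target block yields a direct sum of tilting modules whose total Verma-flag character is explicit. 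To isolate the indecomposable summand $T_\la$, use the $\mathfrak{osp}(3|2)$-analogue of Proposition~\ref{prop:flags} to establish lower bounds $(T_\la : M_\nu) \ge 1$ for each expected $\nu$, and then match these against the upper bound provided by the translated character.

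The main obstacle is the careful treatment of the singular boundary weights in parts (2) and (3), namely $\la \in \{0, \pm(\delta \pm \ep)\}$ and $\la \in \{\pm \hf(\delta \pm \ep)\}$, where elements of $W_\la$ stabilize $\la$ or where adjacent $\Z$-translates become $W_\la$-conjugate. In these cases some expected Verma summands either collapse or merge, producing the shorter formulas visible in the statement (for example, the three-term $T_0$, or the asymmetry between $T_{j\delta + j\ep}$ and $T_{-j\delta - j\ep}$, reflecting the difference between dominant and anti-dominant orbit representatives). Handling these requires a case-by-case application of the Jantzen sum formula to verify that no unwanted composition factor appears, together with BGG reciprocity and Soergel duality as independent consistency checks on the final multiplicities.
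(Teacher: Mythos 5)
Your general strategy for parts (2)--(3) — starting from a typical block, translating via tensoring with a small finite-dimensional module (the paper uses the standard $5$-dimensional module $L_{\delta+\rho}$, applied to $T_{\la-\delta}=M_{\la-\delta}$), and separating the desired indecomposable summand using the analogue of Proposition~\ref{prop:flags} — is indeed the same strategy the paper uses, and your Jantzen-sum argument for part (1) is correct and essentially equivalent. However, there is one genuine gap that your plan does not close and that the paper singles out as the only delicate point: in part~(3), the translated tilting containing $T_{\hf\delta+\hf\epsilon}$ has Verma-flag character with $2M_{-\hf\delta+\hf\epsilon}+2M_{-\hf\delta-\hf\epsilon}$, and one must show that the multiplicity actually landing in $T_{\hf\delta+\hf\epsilon}$ is $1$, not $2$; equivalently, by BGG reciprocity and Soergel duality, that $[M_{\hf\delta-\hf\epsilon}:L_{-\hf\delta-\hf\epsilon}]=1$.

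Your proposed resolution — ``a case-by-case application of the Jantzen sum formula to verify that no unwanted composition factor appears, together with BGG reciprocity and Soergel duality as independent consistency checks'' — does not settle this. The issue is not the presence of an extra composition factor but the \emph{multiplicity} of an expected one; the Jantzen sum formula bounds only the sum $\sum_{i\geq 1}[M^i:L]$ over all filtration layers and, in this block, that upper bound comes out to $2$, leaving both possibilities open; and BGG reciprocity/Soergel duality merely transport the question from a tilting multiplicity to a Verma multiplicity, without computing it. The paper instead performs a direct argument inside the Verma module: it observes that the $\rho$-shifted weight space of weight $-\hf\delta-\hf\epsilon$ in $M_{\hf\delta-\hf\epsilon}$ is two-dimensional (spanned, modulo reordering, by $E_{-\delta}v$ and $E_{-(\delta-\epsilon)}E_{-\epsilon}v$), and that $E_\delta E_{-\delta}v_{\delta-\epsilon}$ is a nonzero multiple of the highest weight vector; from this it deduces that the subspace of primitive vectors of that weight is at most one-dimensional, hence $[M_{\hf\delta-\hf\epsilon}:L_{-\hf\delta-\hf\epsilon}]=1$. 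Some explicit Verma-module computation of this kind (a weight-space dimension count together with a nonvanishing of an $E_\alpha E_{-\alpha}$-coefficient) is needed to close the argument; your outline should flag and supply it.
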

\begin{proof}
When $\la\not=\frac{1}{2}\delta\pm\frac{1}{2}\epsilon$, all tilting characters $T_\lambda$ can be obtained by applying the translation functor, associated with tensoring with the standard module $L_{\delta+\rho}$, on $T_{\lambda-\delta}=M_{\lambda-\delta}$.

In the case of $\lambda=\frac{1}{2}\delta\pm\frac{1}{2}\epsilon$, we obtain their tilting characters by applying the translation functor associated with $L_{\delta+\rho}$ to $T_{-\frac{1}{2}\delta+\frac{1}{2}\epsilon}=M_{-\frac{1}{2}\delta+\frac{1}{2}\epsilon}+M_{-\frac{1}{2}\delta-\frac{1}{2}\epsilon}+M_{-\frac{3}{2}\delta+\frac{3}{2}\epsilon}+M_{-\frac{3}{2}\delta-\frac{3}{2}\epsilon}$ and to $T_{-\frac{1}{2}\delta-\frac{1}{2}\epsilon}=M_{-\frac{1}{2}\delta-\frac{1}{2}\epsilon}+M_{-\frac{3}{2}\delta-\frac{3}{2}\epsilon}$, respectively.

{All calculations above are straightforward using Lemma \ref{prop:flags} except in the case of $T_{\hf\delta+\hf\epsilon}$. Indeed, here we need to use the fact that $[T_{\frac{1}{2}\delta+\frac{1}{2}\epsilon}:M_{-\frac{1}{2}\delta+\frac{1}{2}\epsilon}]=1$, which is equivalent to $[M_{\frac{1}{2}\delta-\frac{1}{2}\epsilon}:L_{-\frac{1}{2}\delta-\frac{1}{2}\epsilon}]=1$. This can be seen as follows.
First, we note that the $\rho$-shifted weight space of weight ${-\frac{1}{2}\delta-\frac{1}{2}\epsilon}$ in the Verma module $M_{\frac{1}{2}\delta-\frac{1}{2}\epsilon}$  is two-dimensional. Now, the vector $E_{\delta}E_{-\delta}v_{\delta-\epsilon}$ is a nonzero multiple of the highest weight vector $v_{\delta-\epsilon}$ in $M_{\hf\delta-\hf\epsilon}$. From this, we conclude that the dimension of the space of primitive vectors of $\rho$-shifted weight $-\hf\delta-\hf\epsilon$ in $M_{\frac{1}{2}\delta-\frac{1}{2}\epsilon}$ cannot exceed $1$, and hence we have $[M_{\frac{1}{2}\delta-\frac{1}{2}\epsilon}:L_{-\frac{1}{2}\delta-\frac{1}{2}\epsilon}]= 1$, as claimed.}
\end{proof}

\begin{rem}
Note that in Theorem \ref{ch:osp32}(3) all weights are integral. Thus, the results of Bao and Wang in \cite{BW18} applies. Indeed, the formulas in these cases have been computed earlier by them. We are grateful to them for communicating these formulas.
\end{rem}

%%%%%%%%%%%%%%

\end{document}